\documentclass{article}
\usepackage{bbm}
\usepackage{caption}
\usepackage{rotating}
\usepackage[maxfloats=100]{morefloats}
\usepackage{listings}
\usepackage{booktabs}
\usepackage{xcolor}
\usepackage[title]{appendix}
\usepackage{makecell}
\usepackage{multirow}
\usepackage[symbol]{footmisc}
\usepackage{longtable}
\usepackage{subcaption}
\usepackage[numbers]{natbib}
\usepackage{geometry}
\usepackage{verbatim}
\usepackage{graphicx}
\usepackage{enumitem}
\usepackage{comment}
\usepackage{subfiles}
\usepackage{todonotes}
\usepackage{tikz-qtree,tikz-qtree-compat}
\usepackage{scrextend}
\usepackage{framed}
\usepackage{placeins}
\usepackage{setspace}
\usepackage[ruled,vlined]{algorithm2e}
\usepackage{mathtools,collcell,eqparbox}
\usepackage{multicol}
\usepackage{pgfplots}
\usepackage{tikz}
\usepackage{courier}
\usepackage{url}
\usepackage{color}
\usepackage{authblk}
\usepackage{amssymb}
\usepackage{amsthm}
\usepackage{dsfont}
\usepackage{amsmath}
\usepackage[pagebackref = true]{hyperref}

\hypersetup{
  colorlinks = true,
  linkcolor = teal,
  anchorcolor = teal,
  citecolor = teal,
  filecolor = teal,
  urlcolor = teal
}

\pgfplotsset{compat=1.15}
\numberwithin{equation}{section}

\makeatletter
\newcounter{BMatrix}

\newcommand{\setmaxwd}[1]{%
  \eqmakebox[BM-\theBMatrix][\BMalign]{$#1$}%
}
\MHInternalSyntaxOn

\MHInternalSyntaxOff
\makeatother

\newtheorem{theorem}{Theorem}
\newtheorem{assumption}{Assumption}

\newtheorem{proposition}{Proposition}[section]

\newtheorem{corollary}{Corollary}[theorem]
\newtheorem{lemma}{Lemma}
\theoremstyle{definition}
\newtheorem{definition}{Definition}
\newtheorem{example}{Example}
\theoremstyle{remark}
\newtheorem{remark}{Remark}
\newtheorem*{remark*}{Remark}

\DeclareMathOperator*{\esssup}{ess sup}

\DeclareMathOperator{\dist}{dist}

\newcommand{\1}{\mathds{1}}

\newcommand{\A}{\mathbb{A}}
\newcommand{\W}{\mathbb{W}}
\newcommand{\R}{\mathbb{R}}

\newcommand{\X}{\mathbb{X}}

\newcommand{\Ss}{\mathbb{S}}
\newcommand{\Z}{\mathbb{Z}}
\newcommand{\N}{\mathbb{N}}
\newcommand{\E}{\mathbb{E}}

\newcommand{\eqd}{\overset{d}{=}}

\newcommand{\sgn}{\mathrm{sign}}

\newcommand{\reg}{\mathrm{Regret}}

\newcommand{\del}{\partial}
\newcommand{\da}{\downarrow}
\newcommand{\ra}{\rightarrow}

\newcommand{\ua}{\uparrow}

\newcommand{\cd}{\cdot}
\newcommand{\ds}{\dots}

\newcommand{\mrm}[1]{\mathrm{#1}}

\newcommand{\diam}{\mathrm{diam}}

\newcommand{\KL}{\mrm{KL}}

\newcommand{\cA}{\mathcal{A}}
\newcommand{\cB}{\mathcal{B}}

\newcommand{\cD}{\mathcal{D}}

\newcommand{\cF}{\mathcal{F}}
\newcommand{\cG}{\mathcal{G}}

\newcommand{\cM}{\mathcal{M}}

\newcommand{\cP}{\mathcal{P}}

\newcommand{\cT}{\mathcal{T}}

\newcommand{\cW}{\mathcal{W}}
\newcommand{\cX}{\mathcal{X}}

\newcommand{\cZ}{\mathcal{Z}}

\newcommand{\TV}{\mathrm{TV}}

\newcommand{\set}[1]{\left\{{#1}\right\}}

\newcommand{\ceil}[1]{\left\lceil{#1}\right\rceil}

\newcommand{\norm}[1]{\left\|#1\right\|}

\newcommand{\abs}[1]{\left|#1\right|}
\newcommand{\sqbk}[1]{\left[ #1 \right]}
\newcommand{\sqbkcond}[2]{\left[ #1 \middle| #2 \right]}

\newcommand{\crbk}[1]{\left( #1 \right)}

\newcommand{\bd}[1]{\mathbf{#1}}

\newcommand{\argmax}[1]{\underset{#1}{\operatorname{arg} \operatorname{max}}\;}

\title{Fast Convergence of Policy Regret in\\ Learning Stochastic Optimal Control}
\author[1]{Shengbo Wang}
\author[2]{Jose Blanchet}
\author[2]{Peter Glynn}
\affil[1]{Daniel J. Epstein Department of Industrial and Systems Engineering\\ University of Southern California}
\affil[2]{Management Science and Engineering\\ Stanford University}
\date{May 2026}
\begin{document}
\maketitle

\begin{abstract}

Policy learning in modern operations environments faces a fundamental tension between limited operational data and the large, often continuous, state and action spaces over which good decisions must be identified and deployed. We study value-based policy learning in stochastic optimal control: a greedy policy induced by an estimate of the optimal action-value function $Q^*$ is deployed, and its performance is measured by regret. The empirical success of this approach calls for statistical insight into the structures that enable fast regret convergence. We show that, in continuous action spaces, fast policy learning is induced by three geometric structures: a growth exponent $p$, which quantifies how quickly $Q^*$ separates suboptimal actions from its maximizers; a margin-mass exponent $m$, which controls how much deployment mass lies on states with weak growth; and an action-wise regularity exponent $q$, which measures the smoothness of the $Q^*$-estimation error across actions. Given a $n^{-1/2}$-accurate estimator of $Q^*$, we show that the minimax-optimal policy regret convergence rate is
\[
    \widetilde{\Theta}\!\left(
    n^{-\min\left\{\frac{p}{2(p-q)},\,\frac{m+1}{2m}\right\}}
    \right),
\]
up to a logarithmic factor at the boundary between the two regimes. The exponent $q$ is crucial: $q>0$ yields faster-than-$n^{-1/2}$ regret. This regime is natural in operations applications. In particular, we verify $q>0$ under mild regularity conditions in dynamic inventory control and service allocation examples, while the mechanism underlying this fast rate regime extends beyond these settings.

\end{abstract}

\section{Introduction}

Stochastic control is a central language for sequential and dynamic decision-making in operations research (OR). It formalizes how controlled dynamical systems evolve over time and provides principled guidance on how a decision maker should act when inventories evolve, queues build up, demand fluctuates, or workloads move through a network. Classical stochastic control models typically take the transition dynamics, reward function, and exogenous noise distribution as known primitives, and then focus on solving the induced dynamic program \citep{bertsekas2012dynamic}. Modern operations applications are different. Model parameters, transition distributions, and sometimes even rewards or utilities must be inferred from operational data before a policy can be computed. Therefore, it becomes increasingly important to consider stochastic optimal control through a statistical lens. This perspective is now central in many modern OR applications. Dynamic pricing systems learn demand response before setting prices, inventory systems use historical demand and covariates to prescribe replenishment decisions, and service and resource allocation systems estimate uncertain network conditions before deploying routing, matching, or allocation policies \citep{besbes2009dynamic,ban2019big}. 

In these operations settings, the object ultimately inferred and deployed is a policy, and the relevant performance metric is therefore its value or regret; i.e., the performance loss of the learned policy relative to the optimal dynamic decision rule. At the same time, forecasts, parameter estimates, value estimates, and learned reward models are intermediate objects: their utility is ultimately judged by the decisions they support.

This distinction becomes particularly important in continuous state and action spaces. Many OR decisions are either inherently continuous or naturally benefit from continuous embeddings: prices, order quantities, staffing levels, production rates, allocations, and related operational decisions live more naturally in real vector spaces than as discrete labels. Such embeddings are not merely a modeling convenience. They can reveal smoothness and local structure that are invisible in worst-case discrete formulations \citep{li2020breaking}, and therefore create additional statistical leverage for policy learning. At the same time, continuous spaces make learning more delicate. The target policy is an infinite-dimensional object, while the available operational data may be very limited. These features make the way in which data are converted into a policy central to data-driven stochastic optimal control.

There are two main principles for learning good policies from data. A policy-based approach parameterizes and optimizes a policy directly, while a value-based approach estimates a surrogate value function and acts greedily. This paper studies the second route in a Markov decision process environment. We consider an estimator $\widehat Q_n$ of the optimal action-value function $Q^*$ and deploy the greedy policy $\hat \pi_n(x) \in \arg\max_{a\in \A} \widehat Q_n(x,a)$, where $x\in\X$ and $a\in\A$ represent the state and the action. This principle underlies fitted value and $Q$ iteration, projected dynamic programming, and many reinforcement learning (RL) methods \citep{bertsekas1996neuro,powell2007approximate,ernst2005tree,munos2008finite,levine2020offline}. It has strong empirical appeal: value-based and actor-critic methods often produce effective deployed policies even when global value-function estimates or Bellman errors remain far from negligible \citep{mnih2015human}. This empirical phenomenon raises a basic statistical question: \emph{Why can a greedy policy perform well before the $Q$-function is accurately estimated?}

An important insight is that a $Q$-function is used comparatively. A good greedy policy does not require the learned $Q$-function to be uniformly accurate; it requires the estimate to preserve the relevant action comparisons in the regions that matter for deployment. Estimation errors have a limited impact on performance when they still preserve nearby comparisons, occur away from good actions, or happen at states with little deployment visitation mass. Thus, the statistical difficulty of value-based policy learning is governed by how the $Q^*$ estimation error interacts with the local geometry of the action maximization problem. 

For finite action spaces, this insight has a clean form. \citet{farahmand2011action} observed that an action gap can make a greedy policy more stable: if the best action is separated from the second-best action, then moderate value-estimation errors need not change the selected action. Building on this idea, \citet{hu2025fast} developed fast policy-regret guarantees for offline RL and showed that policy regret can converge faster than the global $Q$-estimation error when the decision problem has favorable noise or margin structure. These results explain an important part of the empirical success of value-based learning. But they also reveal a limitation of the finite-action theory. In a continuous action space, an action gap is almost always absent. Indeed, if $a\ra Q^*(x,a)$ is continuous, then actions arbitrarily close to an optimizer have values arbitrarily close to optimal, hence no positive optimality gap. Therefore, a theory for the continuous action case needs a different notion of decision stability.

This paper develops such a theory. In continuous action spaces, the analogue of a finite action gap is not a discrete separation between two actions, but the local growth of the $Q$-surface near its maximizers. This growth matters in two ways: it determines how estimation error moves the greedy action, and it determines how costly the resulting action error is in value. We capture this local decision geometry through an action-growth condition.

Since we study value-based learning, the structure of the $Q$-estimation error is also important. A uniform bound on $\widehat Q_n-Q^*$ controls the maximum error gap, but greedy action selection depends more critically on how smoothly this error varies across actions. If the centered error is smooth in the action variable, then nearby actions receive similar perturbations, making the argmax more stable. This motivates an action-wise regularity condition on the estimation error, which we show is satisfied in many important OR models.

Finally, these considerations need not hold uniformly over all states. Policy regret is averaged over the states relevant for deployment, so states with weak local growth matter only to the extent that they carry mass under this distribution. The margin-mass condition controls the prevalence of such difficult states. It is the continuous-action analogue of the margin conditions in classification and finite-action policy learning \citep{mammen1999smooth,audibert2007fast,hu2025fast}, but the margin now concerns the action value growth rather than a strict action gap.

The paper gives a minimax-optimal theory of the interaction between estimation error and continuous decision geometry. At a high level, the rate is determined by three quantities. The growth exponent $p$ quantifies how quickly the true $Q$-function depreciates actions when moving away from the optimal action set. The action-wise regularity exponent $q$ describes the smoothness of the estimation error over actions. The margin exponent $m$ limits the probability mass of states where this local growth is weak. When the $Q$ estimator has the canonical $n^{-1/2}$ convergence rate, we prove matching upper and lower bounds for the policy regret rate as
\[
    \widetilde \Theta\crbk{
    n^{-\min\set{\frac{p}{2(p-q)},\frac{m+1}{2m}}}}.
\]
Here $\widetilde\Theta$ suggests minimax optimality up to logarithmic factors; the logarithmic factor appears only in the boundary case $\frac{p}{2(p-q)} = \frac{m+1}{2m}.$ The first exponent corresponds to the growth-limited regime, while the second corresponds to the margin-mass-limited regime. 

Crucially, when $q>0$ (with $q < p <\infty$ and $m<\infty$ typically holding automatically), the rate is always faster than the canonical $n^{-1/2}$. This explains how fast convergence in optimal policy learning can arise in structured OR settings. In particular, our theory applies naturally to data-driven control problems where the system dynamics are known or accessible, but the driving noise distribution must be learned from data, such as demand uncertainty in inventory and pricing, travel-time uncertainty in logistics, and service-time variability in queueing and resource allocation. Through the dynamic inventory control and service allocation examples in Section~\ref{section:verification-or-models}, we show that OR models with max-plus-linear dynamics often admit faster policy learning rates.

In addition to guaranteeing fast rates for OR settings, the optimal rate quantitatively shows how local action growth, together with smooth-in-action $Q$-estimation error, can turn a moderately accurate value estimate into a substantially more accurate greedy policy. This points to a new perspective: the design and analysis of value-based policy learning should focus on the interaction among the data-generating process, the regularity of the $Q$ estimator, and the local geometry of the decision problem.

Finally, we remark that although the paper is framed in discounted stochastic optimal control, both the upper and lower bounds cover contextual stochastic optimization \citep{kannan2025data,bertsimas2020predictive,luedtke2020performance} as a special case (with one stage). Indeed, by letting the system transition immediately to an absorbing state, the optimal value function reduces to the contextual objective $V^*(x) := \max_{a\in\A} \E[r(x,a,W)]$, where $r$ is the reward, $a\in\A$ represents the decision vector, and $x\in\X$ is the context/covariate. Then, the greedy policy against the plug-in estimate of $\E[r(x,a,W)]$ becomes the usual sample-average-approximation estimator.
Thus, the same growth, action-wise regularity, and margin-mass structure characterizes when continuous-action contextual decisions can achieve fast policy-regret rates. 

\subsection{Literature Review}

\paragraph{Data-driven stochastic optimal control and contextual stochastic optimization.}
Dynamic programming and Markov decision processes are foundational models for sequential decision-making under uncertainty \citep{puterman2014markov,bertsekas2012dynamic}. The data-driven OR literature shifts the emphasis from solving a fully specified model to learning decisions from finite operational data. This perspective appears across dynamic pricing with demand learning \citep{besbes2009dynamic}, data-driven inventory \citep{ban2019big}, renewable-energy operations \citep{kim2011optimal}, ride-sharing assignment \citep{alonso2017demand}, and dynamic treatment regimes \citep{murphy2003optimal}. Classical sample-average approximation provides a principled statistical route for stochastic optimization \citep{kleywegt2002sample,shapiro2021lectures}, while model-free methods such as $Q$-learning and policy gradient provide complementary approaches when model primitives are unavailable or difficult to estimate \citep{watkins1992q,sutton1999policy,lan2023policy,wang2026q}. Closely related one-stage formulations appear in prescriptive analytics and contextual stochastic optimization, where covariates are used to prescribe decisions rather than only predict uncertainty \citep{bertsimas2020predictive,ban2019big,kallus2023stochastic}. Predict-then-optimize and decision-focused learning make the same distinction explicit: prediction accuracy and decision quality need not coincide \citep{elmachtoub2022smart,mandi2024decision}. Our work studies the dynamic value-based analogue, where the central object is the regret of the deployed greedy policy rather than the estimation loss for the learned value function.

\paragraph{The fast convergence of policy regret phenomenon.}
Value-based methods underlie fitted value and $Q$ iterations, and offline RL \citep{bertsekas1996neuro,ernst2005tree,munos2008finite,levine2020offline}. The pioneering work of \citet{farahmand2011action} explains the fast convergence of value-based methods through the action-gap between optimal and suboptimal actions. Related gap-weighted ideas also appear in classification-based approximate policy iteration \citep{farahmand2015classification}. Most directly, \citet{hu2025fast} develops a finite-action fast-rate theory for offline reinforcement learning, showing that policy regret can converge faster than the canonical $n^{-1/2}$ rate. Related ideas appear in treatment assignment, policy learning,  and contextual bandit \citep{manski2004statistical,athey2021policy,luedtke2020performance,hu2022fast}. Our work identifies the continuous-action analogue of this action-gap phenomenon, which is inherently tied to discrete action settings. 

\paragraph{Margins, fast rates, and decision geometry.}
Fast statistical rates often arise when the induced decision loss is smoother than the estimation error. This idea goes back at least to the margin analysis of \citet{mammen1999smooth}, and was further developed through low-noise, margin, and calibration conditions in \citet{audibert2007fast} and \citet{bartlett2006convexity}. The present paper develops the analogous theory for greedy policies in continuous-action stochastic optimal control and contextual stochastic optimization.

\section{Stochastic Optimal Control and Policy Learning}
\label{section:setup}

We consider an infinite-horizon discounted stochastic optimal control problem. The state space $\X\subset\R^{d_\X}$, action space $\A\subset\R^{d_\A}$, and noise space $\W\subset\R^{d_\W}$ are endowed with their Borel $\sigma$-fields $\cX$, $\cA$, and $\cW$, respectively. We assume that $\A$ is compact and convex. Let $\Z:=\X\times\A$, equipped with the product $\sigma$-field $\cX\times\cA$. Throughout the paper, $\abs{\cd}$ denotes the Euclidean norm on arbitrary-dimensional real spaces, and $\norm{\cd}$ denotes the sup norm.

We consider a controlled stochastic system driven by a measurable state transition function $f:\X\times\A\times\W\ra\X$. The stochasticity of the system is defined on the canonical space $(\X\times\W^\N,\cX\times\cW^\N)$, supporting random variables $\set{X_0,W_1,W_2,\ds}$ such that $X_0(\omega)=x_0$ and $W_n(\omega)=w_n$ for $\omega=(x_0,w_1,w_2,\ds)\in\X\times\W^\N$. Here, $X_0$ is the initial state and $\set{W_n:n\geq 1}$ is the driving noise sequence. For an initial distribution $\mu\in\cP(\cX)$ and a common noise law $\psi\in\cP(\cW)$, we consider the probability measure $\mathbb P_\mu=\mu\times\psi^\N$ on $(\X\times\W^\N,\cX\times\cW^\N)$, where the dependence on $\psi$ is suppressed for notational convenience. 

Let $\Pi_{\mrm{HD}}$ denote the class of admissible deterministic history-dependent policies; that is, each $\pi\in\Pi_{\mrm{HD}}$ is a sequence of measurable functions $\pi=\set{\pi_t:t\geq 0}$ with $\pi_t:\X^{t+1}\to\A$. Let $\Pi_{\mrm{SD}}\subset\Pi_{\mrm{HD}}$ denote the class of measurable stationary deterministic policies. We identify each $\pi\in\Pi_{\mrm{SD}}$ with a measurable map $\pi:\X\to\A$.

For fixed $\pi\in\Pi_{\mrm{HD}}$, we write $X_0^\pi := X_0$ for notation convenience and recursively define the processes
$$
A_t^\pi:=\pi_t(X^\pi_t,X^\pi_{t-1},\ds,X_0^\pi),
\quad\text{and}\quad
X^\pi_{t+1}:=f(X^\pi_t,A_t^\pi,W_{t+1})
$$
for all $t\geq 0$. We refer to the pair $\set{(X^\pi_t,A_t^\pi):t\geq 0}$ as the controlled Markov chain under policy $\pi$. In this setting, it is convenient to define the controlled transition kernel as the pushforward measure
\begin{equation}
\label{eqn:def_P_from_f}
P(B\mid z):=\psi\crbk{\set{w\in\W:\ f(z,w)\in B}},
\end{equation}
for all $B\in\cX$ and $z\in\Z$. Thus, under $\E_\mu$, the law of $\set{X^\pi_t:t\geq 0}$ coincides with that of a stochastic process with initial distribution $\mu$ and transition distribution $P(\cd| x_t,\pi_t(x_t,\ds,x_0))$.

Let $r:\X\times\A\times\W\to\R$ be a measurable reward function. Throughout the paper, we assume that $r(z,\cdot)$ is $\psi$-integrable and define
$$
\bar r(z):=\int_\W r(z,w)\psi(dw).
$$
We then define the control performance metric as the expected infinite-horizon discounted reward
\begin{equation}\label{eqn:def_Vpi}
V^\pi(\mu):=
\E_\mu\sqbk{\sum_{t=0}^\infty \gamma^t r(X^\pi_t,A_t^\pi,W_{t+1})}. 
\end{equation}
When $\mu=\delta_x$ is a point mass at $x\in\X$, we write $\E_{\delta_x}=\E_x$ and, by abuse of notation, write $V^\pi(\delta_x)=V^\pi(x)$. Define the optimal control value function
\[
V^*(x):=\sup_{\pi\in\Pi_{\mrm{HD}}}V^\pi(x)
\]
for all $x\in\X$.

\begin{definition}[Policy regret]\label{def:regret}
For $\pi\in\Pi_{\mrm{SD}}$, define the regret from $\mu$ by
\[
\reg(\pi;\mu):=V^*(\mu)-V^\pi(\mu).
\]  
\end{definition}

The regret $\reg(\pi;\mu)$ therefore measures the loss in discounted reward incurred by using the stationary policy $\pi$, relative to the optimal value attainable starting from the initial distribution $\mu$.

When $V^*$ is measurable, we define the action-value function by
\begin{equation}
\label{eqn:def_Qstar}
Q^*(z):=
\bar r(z)+\gamma\int_{\X}V^*(x') P(dx'|z),
\qquad z=(x,a)\in\Z.
\end{equation}

In this paper, we consider a policy learning strategy based on a sequence of estimators $\{\widehat Q_n:n\geq 1\}$ that approximate $Q^*$ as $n\ra\infty$. We then take the greedy policy $\hat\pi_n$ with respect to $\widehat Q_n$ as our policy, and show that the regret can converge faster than the canonical rates under additional regularity conditions. We further prove matching lower bounds, certifying the optimality of our results.

Throughout, $\{\widehat Q_n:n\geq 1\}$ and $\{\hat\pi_n:n\geq 1\}$ are estimators defined on an underlying probability space $(\Omega,\cF,P)$, which is distinct from the canonical space introduced above. We assume that $\cF$ contains all $P$-null sets. We note that the index $n$ may, but need not, represent the number of samples used to construct the estimator $\widehat Q_n$. A rigorous abstraction is defined and analyzed in Section \ref{section:minimax-lower-bounds}. 

To ensure that the stochastic control problem is well-behaved and the optimal policy learning problem is well-posed, we first impose the following measurability requirement. To streamline the main results, we state this requirement as an abstract assumption; sufficient conditions on the model primitives $(f,r,\psi)$ are established in Section~\ref{section:suff_cond}.

\begin{assumption}[Measurability and optimality]
\label{assump:regularity}
Assume that $\bar r:\Z\to\R$, $V^*:\X\to\R$, and $Q^*:\Z\to\R$ are bounded and measurable.
For each $x\in\X$, the mapping $a\ra Q^*(x,a)$ is continuous on $\A$, and for all $x\in\X$,
\begin{equation}
\label{eqn:Vstar_max_Qstar}
V^*(x)=\max_{a\in\A}Q^*(x,a).
\end{equation}

For each $n\ge 1$ and $P$-a.e.\ $\omega\in\Omega$, the map $\widehat Q_n(\omega,\cd):\Z\to\R$ is $(\cX\times\cA)$-measurable and, for each $x\in\X$, the mapping $a\ra \widehat Q_n(\omega,x,a)$ is continuous on $\A$.
\end{assumption}

\begin{remark}\label{rmk:measurable_selection_everywhere}Since $\cF$ contains all $P$-null sets, by modifying $\widehat Q_n$ on a null set if necessary, we may and do assume throughout that the properties in Assumption \ref{assump:regularity} hold for every $\omega\in\Omega$.
\end{remark}

We first observe that, by the integrability of $r(z,\cdot)$, the boundedness of $\bar r$, and the independence of $W_{t+1}$ from $(X^\pi_t,A_t^\pi)$ under $\E_\mu$,
$$
\E_\mu\sqbk{\sum_{t=0}^\infty \gamma^t \bar r(X^\pi_t,A_t^\pi)}
=
\sum_{t=0}^\infty \gamma^t\E_\mu\sqbk{r(X^\pi_t,A_t^\pi,W_{t+1})}
=
V^\pi(\mu).
$$
Thus, we may treat $\bar r$ as the effective reward function.

On the other hand, by Assumption \ref{assump:regularity} and compactness of $\A$, the set
\begin{equation}
\label{eqn:def_Astar}
\A^*(x):=\set{a\in\A:\ Q^*(x,a)=V^*(x)}
\end{equation}
is nonempty and closed, hence compact, for every $x\in\X$.
By the measurable maximum theorem, there exists a measurable selector $\pi^*\in\Pi_{\mrm{SD}}$ such that $\pi^*(x)\in\A^*(x)$ and $V^*(x)=Q^*(x,\pi^*(x))$ for all $x\in\X$.
Moreover, the correspondence $x\to \A^*(x)$ is measurable.

Similarly, as clarified in Remark \ref{rmk:measurable_selection_everywhere}, for each $\omega\in\Omega$, we can always choose $\hat\pi_n(\omega,\cd):\X\to\A$ as a measurable selector satisfying
\begin{equation}
\label{eqn:def_hat_pi}
\hat\pi_n(\omega,x)\in \argmax{a\in\A}\widehat Q_n(\omega,x,a)
\end{equation}
for all $x\in\X$. Thus, the existence of measurable $\hat\pi_n(\omega,\cd)$ on $\Omega$ follows from the same measurable maximum theorem on this full-probability set in Assumption \ref{assump:regularity}.

We suppress the dependence of $\widehat Q_n$ and $\hat\pi_n$ on $\omega$ in the notation.
Observe that, for now, we do not assume joint measurability of $(\omega,z)\to \widehat Q_n(\omega,z)$ or $(\omega,x)\to \hat\pi_n(\omega,x)$.

\subsection{Estimation Error and Regularity in Action}

Next, we impose conditions on the statistical error of $\widehat Q_n$ that are tailored to our analysis of policy regret convergence rates. To handle measurability issues, we work with error envelopes. Specifically, the envelope $\delta_n$ controls the uniform estimation error of $\widehat Q_n$, while $\Lambda_n(q)$ controls the $q$-H\"older modulus, in the action variable, of the centered error $\widehat Q_n-Q^*$. This latter action-wise regularity is one of the main ingredients in our analysis: the greedy maximizer can be more stable when the estimation error varies smoothly across nearby actions.

\begin{assumption}[Estimation error moments bounds]
\label{assump:envelop_moment_bounds}
There exists $q\in[0,1]$ and measurable random variables $\set{\delta_n,\Lambda_n(q)>0:n\geq 1}$ such that for each $n\ge 1$ 
\[
\sup_{z\in\Z}\abs{\widehat Q_n(z)-Q^*(z)}\le \delta_n,
\]
and
\[
\sup_{x\in\X}\sup_{a\ne b}
\frac{\abs{\crbk{\widehat Q_n(x,a)-Q^*(x,a)}-\crbk{\widehat Q_n(x,b)-Q^*(x,b)}}}{\abs{a-b}^q}
\le \Lambda_n(q)
\]
a.s.$P$. Moreover, there exists a constant $C_\delta<\infty$ and, for each integer $k\ge 2$, a constant $C_{\Lambda,k}<\infty$ such that for all $n\ge 1$,
\[
E\sqbk{\delta_n^2}\le \frac{C_\delta}{n},
\qquad
E\sqbk{\Lambda_n(q)^k}\le \frac{C_{\Lambda,k}}{n^{k/2}}.
\]
\end{assumption}

\begin{remark}\label{rmk:positive_delta_Lambda_and_everywhere_bound}
We note that the requirement $\delta_n,\Lambda_n(q)>0$ is only for the convenience of future discussions, where we do not need to deal with $0\cdot +\infty$. In particular, if one finds $\delta_n',\Lambda_n'(q)\ge 0$ satisfying the rest of the requirements in Assumption \ref{assump:envelop_moment_bounds}, then
$\delta_n=\delta_n'+n^{-17}$ and $\Lambda_n(q)=\Lambda_n'(q)+n^{-17}$
satisfy Assumption \ref{assump:envelop_moment_bounds}.

As in Remark \ref{rmk:measurable_selection_everywhere}, since $\cF$ contains all $P$-null sets, after modifying $\delta_n$ and $\Lambda_n(q)$ on a null set if necessary, we assume that the a.s. bounds in Assumption \ref{assump:envelop_moment_bounds} hold for every $\omega\in\Omega$. This convention is used throughout.
\end{remark}

Assumption~\ref{assump:envelop_moment_bounds} is result-driven: the regret analysis below depends only on the envelopes $\delta_n,\Lambda_n(q)$ and their moment bounds. In Section~\ref{section:suff_cond}, we consider a data-driven stochastic optimal control setting and show that, under verifiable regularity conditions on the model primitives that usually hold in OR settings, Assumption~\ref{assump:envelop_moment_bounds} is satisfied.

\subsection{Discounted Occupancy Measures and Concentrability}

The regret $\reg(\pi;\mu)$ is a trajectory-level quantity, while the greedy policy $\hat\pi_n$ is defined via maximization of $\widehat Q_n(x,\cdot)$. To connect these two objects, we use a performance-difference identity that expresses policy regret as an occupancy-weighted Bellman suboptimality gap. We begin by defining the discounted occupancy measure. 

\begin{definition}[Discounted occupancy measure]\label{def:discounted_occupany_meas}

For a stationary deterministic policy $\pi\in\Pi_{\mrm{SD}}$ and the fixed initial distribution $\mu$, define the discounted state occupancy measure
\[
\nu_\mu^\pi(B):=(1-\gamma)\sum_{t=0}^\infty \gamma^t \mathbb P_\mu(X^\pi_t\in B),
\qquad B\in\cX.
\]
\end{definition}

The performance-difference identity below will integrate the state-wise loss $V^*(x)-Q^*(x,\pi(x))$ against $\nu_\mu^\pi$. Since this measure depends on the policy, we impose a common domination condition that allows all such integrals to be compared to integration with respect to a fixed reference measure $\lambda_\mu$.

\begin{assumption}[Concentrability relative to $\mu$]
\label{assump:concentrability}
There exist a reference probability measure $\lambda_\mu$ on $\X$ and a constant $C_{\mrm{conc}}<\infty$ such that for every stationary deterministic policy $\pi\in\Pi_{\mrm{SD}}$,
\[
\nu_\mu^\pi \ll \lambda_\mu
\qquad\text{and}\qquad
\esssup_{\lambda_\mu}\frac{d\nu_\mu^\pi}{d\lambda_\mu}\le C_{\mrm{conc}}.
\]
\end{assumption}

Intuitively, this condition holds when the discounted occupancy measures induced by stationary policies admit densities that are uniformly bounded with respect to a common reference measure. It is mild in many standard settings; for example, it is usually satisfied when $\nu_\mu^\pi$ does not concentrate at different points as $\pi$ varies. 

Next, we present a performance-difference identity in the present setting.

\begin{proposition}[Performance-difference identity]
\label{prop:performance_diff}
For every $\pi\in\Pi_{\mrm{SD}}$,
\[
\reg(\pi;\mu)
=
\frac{1}{1-\gamma}\int_{\X}\crbk{V^*(x)-Q^*(x,\pi(x))} \nu_\mu^\pi(dx).
\]
Under Assumption \ref{assump:concentrability}, this implies
\begin{equation}
\label{eqn:pdl-bound}
\reg(\pi;\mu)
\le
\frac{C_{\mrm{conc}}}{1-\gamma}\int_{\X}\crbk{V^*(x)-Q^*(x,\pi(x))} \lambda_\mu(dx).
\end{equation}
\end{proposition}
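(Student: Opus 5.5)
We prove the identity by a telescoping argument along the trajectory of the stationary policy $\pi$, and then obtain the bound \eqref{eqn:pdl-bound} from Assumption~\ref{assump:concentrability}.

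\emph{Step 1: one-step Bellman relation.} By \eqref{eqn:def_Qstar} and \eqref{eqn:def_P_from_f}, for every $t\ge 0$ we have
\[
Q^*(X_t^\pi,A_t^\pi)=\bar r(X_t^\pi,A_t^\pi)+\gamma\,\E_\mu\sqbkcond{V^*(X_{t+1}^\pi)}{X_0,W_1,\ds,W_t}.
\]
This holds because $X_{t+1}^\pi=f(X_t^\pi,A_t^\pi,W_{t+1})$ with $W_{t+1}\sim\psi$ independent of $(X_0,W_1,\ds,W_t)$, and $(X_t^\pi,A_t^\pi)$ is measurable with respect to that $\sigma$-field. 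The identity then follows from Fubini applied to the product measure $\mathbb P_\mu=\mu\times\psi^\N$. Boundedness and measurability of $V^*$ and $Q^*$ from Assumption~\ref{assump:regularity} justify all expectations.

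\emph{Step 2: telescoping.} Write
\[
V^*(X_t^\pi)-\gamma V^*(X_{t+1}^\pi)=\crbk{V^*(X_t^\pi)-Q^*(X_t^\pi,A_t^\pi)}+\crbk{Q^*(X_t^\pi,A_t^\pi)-\gamma V^*(X_{t+1}^\pi)}.
\]
Multiply by $\gamma^t$, take $\E_\mu$, and sum over $t=0,\ds,T$. By Step 1 the last bracket has expectation $\E_\mu\bar r(X_t^\pi,A_t^\pi)$. The left side telescopes to $\E_\mu V^*(X_0)-\gamma^{T+1}\E_\mu V^*(X_{T+1}^\pi)$.

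Let $T\to\infty$. The remainder $\gamma^{T+1}\E_\mu V^*(X_{T+1}^\pi)$ vanishes since $V^*$ is bounded. The reward series converges absolutely to $V^\pi(\mu)$ by boundedness of $\bar r$, using the effective-reward identity noted after Remark~\ref{rmk:measurable_selection_everywhere}. The gap series has nonnegative terms, because $V^*(x)\ge Q^*(x,a)$ by \eqref{eqn:Vstar_max_Qstar}, so it converges by monotone convergence. This gives
\[
V^*(\mu)-V^\pi(\mu)=\sum_{t\ge0}\gamma^t\E_\mu\sqbk{V^*(X_t^\pi)-Q^*(X_t^\pi,\pi(X_t^\pi))}.
\]
Here $V^*(\mu)$ means $\int V^*\,d\mu=\E_\mu V^*(X_0)$, consistent with Definition~\ref{def:regret}. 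By Definition~\ref{def:discounted_occupany_meas}, the right-hand side equals $(1-\gamma)^{-1}\int (V^*-Q^*(\cd,\pi(\cd)))\,d\nu_\mu^\pi$. To see this, note the integrand $x\mapsto V^*(x)-Q^*(x,\pi(x))$ is nonnegative, bounded and measurable, since $\pi$ is measurable and $Q^*$ is jointly measurable. Apply the identity first to indicators and then extend by monotone convergence.

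\emph{Step 3: the bound.} Since the integrand is nonnegative and $d\nu_\mu^\pi/d\lambda_\mu\le C_{\mrm{conc}}$ holds $\lambda_\mu$-a.e., we have
\[
\int (V^*-Q^*(\cd,\pi(\cd)))\,d\nu_\mu^\pi\le C_{\mrm{conc}}\int (V^*-Q^*(\cd,\pi(\cd)))\,d\lambda_\mu,
\]
which yields \eqref{eqn:pdl-bound}.

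The main technical point is Step 1, namely the rigorous conditional-expectation identity on the canonical space. No Markov property of $V^*$ under $\pi$ is needed, only independence of $W_{t+1}$ and Fubini. Everything else is bookkeeping, made routine by boundedness.
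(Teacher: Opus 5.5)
Your proposal is correct and follows essentially the same argument as the paper: a one-step Bellman identity via conditional expectation, telescoping with the remainder killed by boundedness of $V^*$, rewriting through the occupancy measure, and then the density bound using nonnegativity of the integrand. The only cosmetic difference is that you condition on $\sigma(X_0,W_1,\ldots,W_t)$ rather than on the state filtration $\sigma(X_0^\pi,\ldots,X_t^\pi)$, which makes the independence of $W_{t+1}$ immediate.
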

Since \eqref{eqn:def_hat_pi} holds for every $\omega\in\Omega$, \eqref{eqn:pdl-bound} applies to the greedy policy $\hat\pi_n$ everywhere.

\begin{proof}[Proof of Proposition \ref{prop:performance_diff}]
Consider the natural filtration $\mathcal{F}_t:=\sigma(X^\pi_0,\ldots,X^\pi_t)$.
By the definition \eqref{eqn:def_Qstar},
\[
Q^*(x,\pi(x))
=
\bar r(x,\pi(x))+\gamma\int_{\X}V^*(x') P(dx'|x,\pi(x)),
\]
and therefore
\[
Q^*(X^\pi_t,\pi(X^\pi_t))
=
\bar r(X^\pi_t,\pi(X^\pi_t))
+
\gamma \E_\mu\sqbk{V^*(X^\pi_{t+1})\middle|\mathcal{F}_t}
\qquad\text{a.s.}
\]
Hence, with $\ell^\pi(x):=V^*(x)-Q^*(x,\pi(x))$,
\[
\ell^\pi(X^\pi_t)
=
V^*(X^\pi_t)-\bar r(X^\pi_t,\pi(X^\pi_t))
-\gamma \E_\mu\sqbk{V^*(X^\pi_{t+1})\middle|\mathcal{F}_t}
\qquad\text{a.s.}
\]
Taking expectations, multiplying by $\gamma^t$, and summing from $t=0$ to $T$ yield
\[
\sum_{t=0}^T\gamma^t\E_\mu\sqbk{\ell^\pi(X^\pi_t)}
=
V^*(\mu)-\E_\mu\sqbk{\sum_{t=0}^T\gamma^t \bar r(X^\pi_t,\pi(X^\pi_t))}-\gamma^{T+1}\E_\mu\sqbk{V^*(X^\pi_{T+1})}.
\] 
By Assumption \ref{assump:regularity}, $V^*$ is bounded, so the last term vanishes as $T\to\infty$.
Letting $T\to\infty$ gives
\[
V^*(\mu)-V^\pi(\mu)=\sum_{t=0}^\infty \gamma^t \E_\mu\sqbk{\ell^\pi(X^\pi_t)}.
\]
Using the definition of $\nu_\mu^\pi$,
\[
(1-\gamma)\sum_{t=0}^\infty \gamma^t \E_\mu\sqbk{\ell^\pi(X^\pi_t)}
=
\int_{\X}\ell^\pi(x) \nu_\mu^\pi(dx),
\]
which proves the identity.

By \eqref{eqn:Vstar_max_Qstar},
\[
\ell^\pi(x)=V^*(x)-Q^*(x,\pi(x))\ge 0
\]
for all $x\in\X$.
Therefore Assumption \ref{assump:concentrability} yields
\[
\int_{\X}\ell^\pi d\nu_\mu^\pi
\le
C_{\mrm{conc}}\int_{\X}\ell^\pi d\lambda_\mu,
\]
which proves \eqref{eqn:pdl-bound}.
\end{proof}

\subsection{Action-Value Growth and Margin Mass}

In continuous action spaces with continuous $a\to Q^*(x,a)$, action gaps typically vanish even when the maximizer is unique. Thus, the approach used in the finite-action setting cannot work directly. Instead, the growth of $Q^*(x,\cdot)$ away from the optimal action set, together with the $\lambda_\mu$-mass of states having small growth, will determine the effect of estimation error.

\begin{assumption}[$p$-growth and margin-mass]
\label{assump:p_growth_margin_mass}
There exist $p>0$, $m>0$, a measurable function $g_p:\X\to[0,\infty)$, and a constant $M<\infty$ such that for all $x\in\X$ and $a\in\A$,
\begin{equation}
\label{eqn:Q_err_pgrowth_in_action}
V^*(x)-Q^*(x,a)\ge g_p(x) \dist(a,\A^*(x))^p,
\end{equation}
and for all $t\ge 0$,
\begin{equation}
\label{eqn:margin_mass}
\lambda_\mu(\set{x\in\X:\ g_p(x)\le t})\le M t^{1/m}.
\end{equation}
\end{assumption}

\begin{remark}\label{rmk:separate_g_p_const}
The exponent $m$ quantifies how much mass of states have a small growth coefficient (small $g_p(x)$). This is the continuous-action analogue of margin conditions in the discrete-action setting.
Note that \eqref{eqn:margin_mass} evaluated at $t=0$ implies $\lambda_\mu(\{x:\ g_p(x)=0\})=0$, so $g_p$ may vanish but only on a $\lambda_\mu$-null set.

It is convenient to separate the case when $g_p$ is uniformly bounded away from $0$. In particular, if there exists $g_\wedge>0$ such that $g_p(x)\ge g_\wedge$ for $\lambda_\mu$-a.e.\ $x$, then the analysis can be treated as a ``uniform margin'' regime (cf.\ Theorem \ref{thm:continuous_action_unif_g_ub}).
\end{remark}

Assumption~\ref{assump:p_growth_margin_mass} can be viewed as a continuous-action analogue of the margin-mass condition in classification \citep{mammen1999smooth}. In the binary classification setting, the difficult region is the neighborhood of the decision boundary, where two labels are nearly tied. In our continuous-action setting, the analogous difficulty is not an action-tie, but the set of states where the action-value surface is too flat near $\A^*$, making it statistically difficult to distinguish optimal actions from near-optimal ones. This idea is quantified by the two-layer structure of Assumption~\ref{assump:p_growth_margin_mass}.

\begin{itemize}[leftmargin=*]
    \item \textit{$p$-growth:} For a fixed $x\in\X$, if an action is at distance $d$ from the optimal action set, then the value gap measured by $Q^*$ is at least of order $g_p(x)d^p$. Thus, $p$ captures the local growth near $\A^*$ of the action-value maximization problem, while $g_p(x)$ is a state-dependent curvature coefficient. Allowing $g_p(x)$ to be small or 0 captures states at which this growth is weak and hence optimal decision-making is hard.

    \item \textit{Margin mass:} This condition controls the $\lambda_\mu$-mass where $g_p(x)$ is small. It is analogous to the Tsybakov margin condition in classification: faster rates are achieved when only a small amount of mass lies near the difficult region \citep{mammen1999smooth}.
\end{itemize}

\section{Policy Regret Upper Bounds}
\label{section:main}

We now combine the preceding ingredients to obtain upper bounds on the policy regret of the greedy policy $\hat\pi_n$. Because joint measurability of $(\omega,x)\ra \hat\pi_n(\omega,x)$ is not imposed at this stage, the results are stated in terms of measurable random variables $R_n$ that dominate the regret. When the regret itself is measurable, these bounds can be read directly as expected regret bounds.

As noted in Remark~\ref{rmk:separate_g_p_const}, we separately consider the case where $g_p$ is essentially bounded away from $0$ (Theorem \ref{thm:continuous_action_unif_g_ub}) and the case where $g_p$ satisfies a nontrivial margin-mass bound (Theorem \ref{thm:continuous_action_ub}).

\begin{theorem}
\label{thm:continuous_action_unif_g_ub}
Suppose Assumptions \ref{assump:regularity},  \ref{assump:envelop_moment_bounds}, and \ref{assump:concentrability} hold and there exist $p>q$ and a constant $g_\wedge>0$ such that
\[
V^*(x)-Q^*(x,a) \ge g_\wedge  \dist(a,\A^*(x))^p,
\]
for all $a\in\A$ and for $\lambda_\mu$-a.e.\ $x\in\X$.
Then there exists a measurable random variable $R_n$ with $R_n\ge \reg(\hat\pi_n;\mu)$ such that for all $n\geq 1$,
\[
ER_n \le C n^{-p/(2(p-q))}
\]
for some constant $C<\infty$, depending on $\gamma,C_{\mrm{conc}},p,q,g_\wedge$ and $C_{\Lambda,k_*}$, where $k_*:=\ceil{p/(p-q)}\vee 2$.
\end{theorem}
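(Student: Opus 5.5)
The plan is to bound the state-wise loss $\ell(x):=V^*(x)-Q^*(x,\hat\pi_n(x))$ pointwise by a quantity that depends only on $\Lambda_n(q)$ and the deterministic constants. We then integrate against $\lambda_\mu$ through Proposition~\ref{prop:performance_diff}, and finally take expectations using the moment bound on $\Lambda_n(q)$.

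Fix $\omega$ and a state $x$ outside the $\lambda_\mu$-null exceptional set. Write $\hat a:=\hat\pi_n(x)$ and $d:=\dist(\hat a,\A^*(x))$, and let $a^*\in\A^*(x)$ be a nearest point, which exists because $\A^*(x)$ is compact, so that $|\hat a-a^*|=d$. Set $\Delta:=\widehat Q_n-Q^*$. Greedy optimality gives $\widehat Q_n(x,\hat a)\ge \widehat Q_n(x,a^*)$, which rearranges to
\[
\ell(x)=Q^*(x,a^*)-Q^*(x,\hat a)\le \Delta(x,\hat a)-\Delta(x,a^*)\le \Lambda_n(q)\,d^q .
\]
This is the key step. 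It uses only the action-wise H\"older modulus of the centered error and never $\delta_n$, which is why $\delta_n$ does not enter the constant. Combining it with the growth lower bound $\ell(x)\ge g_\wedge d^p$ gives $g_\wedge d^p\le \Lambda_n(q) d^q$. Since $p>q$, either $d=0$ (and then $\ell(x)=0$) or $d\le (\Lambda_n(q)/g_\wedge)^{1/(p-q)}$. Substituting back,
\[
\ell(x)\le \Lambda_n(q)\, d^q\le g_\wedge^{-q/(p-q)}\,\Lambda_n(q)^{p/(p-q)} .
\]
We also have the trivial bound $\ell(x)\le 2\|Q^*\|$, but it is not needed.

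Next define
\[
R_n:=\frac{C_{\mrm{conc}}}{1-\gamma}\, g_\wedge^{-q/(p-q)}\,\Lambda_n(q)^{p/(p-q)} .
\]
This is a measurable random variable because $\Lambda_n(q)$ is. It dominates the right-hand side of \eqref{eqn:pdl-bound}, since the pointwise bound holds $\lambda_\mu$-a.e.\ and $\lambda_\mu$ is a probability measure. Therefore $R_n\ge\reg(\hat\pi_n;\mu)$ for every $\omega$. This sidesteps the missing joint measurability of $(\omega,x)\mapsto\hat\pi_n(\omega,x)$, because the integral is computed for each fixed $\omega$ and the $\omega$-dependence is routed through $\Lambda_n(q)$ alone.

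For the expectation, set $s:=p/(p-q)\ge 1$ and $k_*=\ceil{s}\vee 2\ge s$. By Jensen (Lyapunov) we get $E[\Lambda_n^s]\le (E[\Lambda_n^{k_*}])^{s/k_*}\le C_{\Lambda,k_*}^{s/k_*} n^{-s/2}$, and $n^{-s/2}=n^{-p/(2(p-q))}$, which is the claimed rate. I expect no serious obstacle. The only delicate points are the null-set bookkeeping (the growth condition holds only $\lambda_\mu$-a.e., which is harmless under the integral) and the case $d=0$ when dividing by $d^q$, both handled above. The case $q=0$ also goes through, giving the rate $n^{-1/2}$.
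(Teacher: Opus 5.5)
Your proposal is correct and follows essentially the same route as the paper. The paper also bounds $\ell_n(x)\le g_\wedge^{-q/(p-q)}\Lambda_n(q)^{p/(p-q)}$ pointwise, via Proposition~\ref{prop:greedy_policy_err}(ii) after extending $g_\wedge$ to a measurable $g_p$ that vanishes on the exceptional null set. It then integrates against $\lambda_\mu$ through \eqref{eqn:pdl-bound}, takes the same $R_n$, and applies the same Lyapunov step at $k_*$; the only difference is that you inline the greedy-stability argument using a nearest point of $\A^*(x)$ instead of citing the proposition.
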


\begin{theorem}
\label{thm:continuous_action_ub}
Suppose Assumptions \ref{assump:regularity},  \ref{assump:envelop_moment_bounds}, \ref{assump:concentrability}, and \ref{assump:p_growth_margin_mass} hold. Then there exists a measurable random variable $R_n$ with
$R_n\ge \reg(\hat\pi_n;\mu)$ such that for all $n\ge 2$:
\begin{enumerate}[label=\textup{(\roman*)}]
\item if $q\leq p<q(m+1)$, then $ER_n \le C  n^{-(m+1)/(2m)}$,
\item if $p=q(m+1)$, then $ER_n \le C  n^{-(m+1)/(2m)}\log n$,
\item if $p>q(m+1)$, then $ER_n \le C  n^{-p/(2(p-q))}$,
\end{enumerate}
for some $C<\infty$, depending only on $\gamma,C_{\mrm{conc}},M,m,p,q,C_\delta$, and $\set{C_{\Lambda,k}: k\ge 2}$. 
\end{theorem}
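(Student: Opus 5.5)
The plan is to derive a deterministic, statewise bound on the Bellman suboptimality $\ell(x):=V^*(x)-Q^*(x,\hat\pi_n(x))$ in terms of $\delta_n$, $\Lambda_n:=\Lambda_n(q)$ and $g_p(x)$. Next, integrate it against $\lambda_\mu$ using the margin-mass condition \eqref{eqn:margin_mass}, and push it through \eqref{eqn:pdl-bound} of Proposition \ref{prop:performance_diff}. The resulting bound depends on $\omega$ only through $(\delta_n,\Lambda_n)$, so we take it as $R_n$; this makes $R_n$ measurable and sidesteps the joint measurability of $\hat\pi_n$. Finally, take expectations using the moment bounds of Assumption \ref{assump:envelop_moment_bounds}, Hölder's inequality and Jensen's inequality.

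\emph{Statewise bound.} Fix $x$, let $a=\hat\pi_n(x)$, and let $b\in\A^*(x)$ attain $\dist(a,\A^*(x))=:d$ (it exists since $\A^*(x)$ is compact). Write $E:=\widehat Q_n-Q^*$. Since $\widehat Q_n(x,a)\ge \widehat Q_n(x,b)$,
\[
\ell(x)=Q^*(x,b)-Q^*(x,a)\le E(x,a)-E(x,b)\le \min\set{2\delta_n,\ \Lambda_n d^q}.
\]
Combining this with $\ell(x)\ge g_p(x)d^p$ from \eqref{eqn:Q_err_pgrowth_in_action}:
\begin{itemize}[leftmargin=*]
\item If $p>q$, we get $g_p d^{p-q}\le\Lambda_n$, so $d\le(\Lambda_n/g_p)^{1/(p-q)}$. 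Hence
\[
\ell(x)\le \min\set{2\delta_n,\ \Lambda_n^{p/(p-q)}g_p(x)^{-s}},\qquad s:=\tfrac{q}{p-q}.
\]
\item If $p=q$ (only possible in case (i)), either $d=0$ or $g_p(x)\le\Lambda_n$. Hence $\ell(x)\le 2\delta_n\1\set{g_p(x)\le\Lambda_n}$.
\end{itemize}
Both bounds are measurable in $x$, so the $\lambda_\mu$-integrals below are well defined.

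\emph{Integration.} For a threshold $t>0$, split $\X$ into $\set{g_p\le t}$ and $\set{g_p>t}$. Use $2\delta_n$ on the first set, whose mass is at most $Mt^{1/m}$. On the second set use the other bound, where by the layer-cake formula and \eqref{eqn:margin_mass}
\[
I(t):=\int_{\set{g_p>t}}g_p^{-s}\,d\lambda_\mu\le \int_0^{t^{-s}}\lambda_\mu\crbk{g_p<v^{-1/s}}dv\le 1+M\int_1^{t^{-s}}v^{-1/(sm)}dv .
\]
This gives three regimes:
\begin{itemize}[leftmargin=*]
\item $I(t)\le C$ if $s<1/m$, which is equivalent to $p>q(m+1)$;
\item $I(t)\le C(1+\log(1/t))$ if $s=1/m$;
\item $I(t)\le Ct^{1/m-s}$ if $s>1/m$.
\end{itemize}
The corresponding choices are as follows.
\begin{itemize}[leftmargin=*]
\item Case (iii): take $t\downarrow 0$, so $R_n=C\Lambda_n^{p/(p-q)}$. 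Then $ER_n\le C n^{-p/(2(p-q))}$ by Jensen's inequality with the moment $k_*=\ceil{p/(p-q)}\vee2$.
\item Case (i): take the random threshold $t=\Lambda_n$. This yields
\[
R_n=C\crbk{\delta_n\Lambda_n^{1/m}+\Lambda_n^{p/(p-q)+1/m-s}}=C\crbk{\delta_n\Lambda_n^{1/m}+\Lambda_n^{1+1/m}}.
\]
The $p=q$ subcase gives directly $\ell\le 2\delta_n\1\set{g_p\le\Lambda_n}$, hence $R_n=C\delta_n\Lambda_n^{1/m}$. By Cauchy--Schwarz, $E\delta_n\Lambda_n^{1/m}\le(E\delta_n^2)^{1/2}(E\Lambda_n^{2/m})^{1/2}\lesssim n^{-(m+1)/(2m)}$, using a sufficiently large even moment $k$ and Jensen. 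The $\Lambda_n^{1+1/m}$ term is handled the same way.
\item Case (ii): take $t=\Lambda_n\vee n^{-1}$, so that $\log(1/t)\le\log n$. The extra terms $\delta_n n^{-1/m}$ and $\Lambda_n^{1+1/m}\log n$ then give the stated $n^{-(m+1)/(2m)}\log n$ bound.
\end{itemize}

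\emph{Main obstacle.} I expect the main difficulty to be the layer-cake estimate of $I(t)$ and bookkeeping the exponents so that the random threshold $t=\Lambda_n$ exactly balances the two pieces. One must also check that $p/(p-q)+1/m-s=1+1/m$, and that the constants depend only on the listed quantities. In case (ii), some care is needed to keep the logarithm deterministic; the truncation $t\ge n^{-1}$ is intended for this.
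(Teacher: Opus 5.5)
Your proposal is correct. It has the same skeleton as the paper: the performance-difference bound, the statewise estimate of Proposition~\ref{prop:greedy_policy_err}, and the same layer-cake computation that drives Lemmas~\ref{lemma:moment_truncation_bound} and~\ref{lemma:inv_moment}. Case (iii) and the subcase $p=q$ go exactly as in the paper.

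The genuine difference is where you truncate in cases (i) with $p>q$ and (ii).

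\emph{The paper's choice.} The paper splits at the $\omega$-wise optimal level $t^\star=(\Lambda_n(q)^\beta/(2\delta_n))^{1/\alpha}$, which balances the two pieces. This gives $R_n\propto \delta_n^{(q(m+1)-p)/(mq)}\Lambda_n(q)^{p/(mq)}+\Lambda_n(q)^\beta$. Bounding its expectation then needs the generalized H\"older inequality (Lemma~\ref{lemma:generalized_holder}) with fractional exponents. In the critical case it produces a random term $\Lambda_n(q)^\beta\log_+(2\delta_n/\Lambda_n(q)^\beta)$, whose expectation requires the separate Lemma~\ref{lemma:critical_log} and its $\log_+x\le x^{\theta_n}/\theta_n$ device.

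\emph{Your choice.} You split at $t=\Lambda_n$ and use $\beta-\alpha=1$. This yields the clean products $\delta_n\Lambda_n^{1/m}$ and $\Lambda_n^{1+1/m}$, so Cauchy--Schwarz and Jensen suffice. In the critical case the floor $t\ge n^{-1}$ makes the logarithm deterministic, so Lemma~\ref{lemma:critical_log} is not needed at all.

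\emph{The trade-off.} You give up only pointwise sharpness. When $\delta_n\asymp\Lambda_n(q)$, $t^\star$ is a constant multiple of $\Lambda_n(q)$, so your threshold is optimal up to constants. That is harmless here because both envelopes have $n^{-1/2}$ moments. The paper's balanced bound, however, would remain rate-optimal if $\delta_n$ and $\Lambda_n(q)$ were allowed to decay at different rates.

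Two small points to fix when you write it up.
\begin{itemize}
\item Your displayed estimate $I(t)\le 1+M\int_1^{t^{-s}}v^{-1/(sm)}\,dv$ is only meaningful for $t<1$. For $t\ge 1$ the integral runs backwards and can make the right-hand side too small. Your random threshold $t=\Lambda_n$ (or $\Lambda_n\vee n^{-1}$) can exceed $1$, so use instead $I(t)\le t^{-s}\lambda_\mu(\{g_p>t\})\le t^{-s}$. This is $\le t^{1/m-s}$ in case (i) and $\le 1$ in case (ii).
\item State explicitly why a random threshold is legitimate. For each fixed $\omega$, the split inequality holds for every deterministic $t>0$. Hence $R_n$, being a Borel function of $(\delta_n,\Lambda_n(q))$, is measurable and dominates the regret.
\end{itemize}
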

We note that when $q< p < q(m+1)$, $\frac{m+1}{2m}< \frac{p}{2(p-q)} $; and when $p > q(m+1)$, $\frac{m+1}{2m}> \frac{p}{2(p-q)} $. Therefore, Theorem \ref{thm:continuous_action_ub} implies that for $p\ge q$, 
\begin{equation}\label{eqn:continuous_action_ub_summary}
    ER_n \le \widetilde O \crbk{n^{-\min\set{\frac{p}{2(p-q)},\frac{m+1}{2m}}}}
\end{equation}
where for the case $p = q$ we read $\frac{p}{2(p-q)} = +\infty$, and $\widetilde O$ hides a $\log n$ term only when $p = q(m+1)$.

The two exponents in \eqref{eqn:continuous_action_ub_summary} correspond to two different bottlenecks. The exponent $-p/(2(p-q))$ is the growth-limited rate. It appears when the error in estimating the optimal action from $\widehat Q_n$ is converted into regret through the $p$-growth of $Q^*(x,\cdot)$. The exponent $-(m+1)/(2m)$ is the margin-mass-limited rate. It reflects the possibility that a $\lambda_\mu$-non-negligible set of states has weak growth, so that uniformly accurate Q-estimation may still lead to unstable action selection on those states. The critical case $p=q(m+1)$ is where these two mechanisms balance, leading to the logarithmic factor.

\begin{remark}\label{rmk:q_eq_0}
    We observe that since $p>0$ in Theorem~\ref{thm:continuous_action_ub}, the case $q=0$ can only arise in (iii) when $p>q(m+1)$. Thus, when $q=0$, we recover the $O(n^{-1/2})$ rate. The same happens in Theorem \ref{thm:continuous_action_unif_g_ub}. This is the natural scaling, as the moments bounds on $\Lambda_n(0)$ do not provide additional leverage beyond that for $\delta_n$.

    On the other hand, as noted in the introduction, when $q>0$, the rate is always faster than the canonical $n^{-1/2}$. Together with the matching lower bound, this shows that the $\Lambda_n(q)$ envelope Assumption~\ref{assump:envelop_moment_bounds} plays a pivotal role in guaranteeing fast convergence. 
\end{remark}
Given the crucial role of $q$, in Section \ref{section:suff_cond} we provide a separate treatment that imposes regularity conditions on the model primitives to guarantee Assumption \ref{assump:envelop_moment_bounds}.

\subsection{A Key Property of the Greedy Policy}
 
Define the policy error at state $x\in\X$ by
\[
\ell_n(x):=V^*(x)-Q^*(x,\hat\pi_n(x)).
\]
When $p>q$, we use the convention that $g_p(x)^{-\alpha}=+\infty$ on the set $\{g_p(x)=0\}$ for any $\alpha>0$, and $g_p(x)^0\equiv 1$.

The proof of both upper bounds relies on the following key stability property of the greedy action.
\begin{proposition}[Greedy stability under $p$-growth]
\label{prop:greedy_policy_err}
Suppose Assumptions \ref{assump:regularity}, \ref{assump:envelop_moment_bounds}, and \eqref{eqn:Q_err_pgrowth_in_action} hold. For any $x\in\X$, the following hold:
\begin{enumerate}[label=\textup{(\roman*)},leftmargin=*]
\item If $p=q$, then
\[
\ell_n(x) \le 2\delta_n \1\set{g_p(x)\le \Lambda_n(q)}.
\]
\item If $p>q$, with $\alpha=q/(p-q)$ and $\beta=p/(p-q)$,
\[
\ell_n(x) \le \min\set{2\delta_n,\ \Lambda_n(q)^{\beta}  g_p(x)^{-\alpha}}.
\]
\end{enumerate}
\end{proposition}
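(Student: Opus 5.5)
Fix $x\in\X$ and write $a:=\hat\pi_n(x)$, $e(b):=\widehat Q_n(x,b)-Q^*(x,b)$. Since $\A^*(x)$ is compact and nonempty, we choose $a^*\in\A^*(x)$ with $\abs{a-a^*}=\dist(a,\A^*(x))=:d$. The whole argument rests on one basic inequality from greediness. Because $\widehat Q_n(x,a)\ge \widehat Q_n(x,a^*)$, we get
\[
\ell_n(x)=Q^*(x,a^*)-Q^*(x,a)\le e(a)-e(a^*).
\]
Bounding the right side in two ways gives the two pieces of the minimum.

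The first piece uses the uniform envelope of Assumption \ref{assump:envelop_moment_bounds}: $e(a)-e(a^*)\le 2\delta_n$. This holds for any $x$ and does not need \eqref{eqn:Q_err_pgrowth_in_action}. The second piece uses the Hölder envelope: $e(a)-e(a^*)\le \Lambda_n(q)\,d^q$. Combining with $p$-growth \eqref{eqn:Q_err_pgrowth_in_action} gives
\[
g_p(x)\,d^p\le \ell_n(x)\le \Lambda_n(q)\,d^q .
\]
If $d=0$, then $a\in\A^*(x)$, so $\ell_n(x)=0$ and every claimed bound holds trivially; henceforth take $d>0$.

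For case (ii), $p>q$: if $g_p(x)=0$, the convention $g_p^{-\alpha}=+\infty$ makes the second bound vacuous (for $q=0$ we have $\alpha=0$ and $g^0=1$, but then the bound reads $\Lambda_n(0)$, which follows directly from $\ell_n\le\Lambda_n(0)d^0$). Otherwise, dividing by $d^q$ gives $d^{p-q}\le \Lambda_n(q)/g_p(x)$, so $d\le(\Lambda_n(q)/g_p(x))^{1/(p-q)}$. Plugging this into $\ell_n(x)\le\Lambda_n(q)d^q$ yields $\ell_n(x)\le \Lambda_n(q)^{1+q/(p-q)}g_p(x)^{-q/(p-q)}=\Lambda_n(q)^\beta g_p(x)^{-\alpha}$. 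Taking the minimum with $2\delta_n$ finishes (ii).

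For case (i), $p=q$: the chain becomes $g_p(x)d^q\le\Lambda_n(q)d^q$ with $d>0$, which forces $g_p(x)\le\Lambda_n(q)$. Contrapositively, if $g_p(x)>\Lambda_n(q)$ then $d=0$ and $\ell_n(x)=0$; otherwise $\ell_n(x)\le2\delta_n$. This gives the indicator form.

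The only delicate points are bookkeeping ones: attainment of the projection onto $\A^*(x)$, which follows from compactness; the $d=0$ and $g_p(x)=0$ edge cases, handled by the stated conventions; and the observation that the bounds hold pointwise for every $\omega$ by Remark \ref{rmk:positive_delta_Lambda_and_everywhere_bound}. I expect no substantive obstacle. The key step is simply recognizing that the same difference $e(a)-e(a^*)$ is controlled both uniformly and in a Hölder fashion in $d$.
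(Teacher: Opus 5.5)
Your proposal is correct and follows essentially the same route as the paper: the greedy inequality bounds $\ell_n(x)$ by the error difference $e(\hat a)-e(a^*)$, which you control both by $2\delta_n$ and by $\Lambda_n(q)\,d^q$, and you then combine the latter with $p$-growth to bound $d$ and substitute back. The $q=0$ and $g_p(x)=0$ edge cases are handled the same way as in the paper. The only cosmetic difference is that you fix a nearest point $a^*\in\A^*(x)$ using compactness, whereas the paper takes the infimum over $a^*\in\A^*(x)$ after bounding by $\Lambda_n(q)\abs{\hat a-a^*}^q$.
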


A key insight from the proof of this proposition is that the error $\ell_n(x)$ is bounded above by the action selection error $\dist(\hat\pi_n(x),\A^*(x))^q$ and $\Lambda_n(q)$ by Assumption~\ref{assump:envelop_moment_bounds}, while it is bounded below by the action selection error raised to the power $p$ and $g_p(x)$ through \eqref{eqn:Q_err_pgrowth_in_action}. Together, these bounds imply that the action selection error is at most $\Lambda_n(q)^{1/(p-q)}g_p(x)^{-1/(p-q)}$ when $p>q$; that is, the interplay between the growth-in-action and the $Q$-estimation-error controls the action selection error at each state $x\in\X$. Substituting this estimate back into the upper bound then yields the claim in Proposition \ref{prop:greedy_policy_err}.

\begin{proof}[Proof of Proposition \ref{prop:greedy_policy_err}]
Fix $x\in\X$ and set $\hat a:=\hat\pi_n(x)$.

If $\hat a\in\A^*(x)$, then $\ell_n(x)=0$ and the bounds hold. Hence, we only need to consider the case $\hat a\notin\A^*(x)$ and set $d:=\dist(\hat a,\A^*(x))>0$.

By definition of $\hat a=\hat\pi_n(x)$ in \eqref{eqn:def_hat_pi},
$\widehat Q_n(x,\hat a)\ge \widehat Q_n(x,a^*)$ for every $a^*\in\A^*(x)$. Hence,
\[
\begin{aligned}
Q^*(x,a^*)-Q^*(x,\hat a)
&= Q^*(x,a^*)\pm \widehat Q_n(x,\hat a) \pm \widehat Q_n(x,a^*)-Q^*(x,\hat a)\\
&\le Q^*(x,a^*) + \widehat Q_n(x,\hat a) - \widehat Q_n(x,a^*)-Q^*(x,\hat a)\\
&\le \abs{\widehat Q_n(x,a^*)-Q^*(x,a^*)}+\abs{\widehat Q_n(x,\hat a)-Q^*(x,\hat a)}.
\end{aligned}
\]
Using Assumption \ref{assump:envelop_moment_bounds},
\begin{equation}\label{eqn:tu_Q_err_bd_by_delta}
Q^*(x,a^*)-Q^*(x,\hat a)\le 2\delta_n.
\end{equation}
Since $Q^*(x,a^*)=V^*(x)$ by \eqref{eqn:def_Astar}, this gives $\ell_n(x)\le 2\delta_n$.

Moreover, by Assumption \ref{assump:envelop_moment_bounds},
\[
\crbk{\widehat Q_n(x,\hat a)-Q^*(x,\hat a)}-\crbk{\widehat Q_n(x,a^*)-Q^*(x,a^*)} \le \Lambda_n(q)\abs{\hat a-a^*}^q.
\]
Using $\widehat Q_n(x,\hat a)\ge \widehat Q_n(x,a^*)$ and rearranging yields
\[
Q^*(x,a^*)-Q^*(x,\hat a)\le \Lambda_n(q)\abs{\hat a-a^*}^q.
\]
Taking the infimum over $a^*\in\A^*(x)$ gives
\begin{equation}\label{eqn:tu_Q_err_ub_by_Lambda}
\ell_n(x)\le \Lambda_n(q)\dist(\hat a,\A^*(x))^q=\Lambda_n(q) d^q.
\end{equation}
On the other hand, by \eqref{eqn:Q_err_pgrowth_in_action},
\begin{equation}\label{eqn:tu_Q_err_lb_by_growth}
\ell_n(x)=V^*(x)-Q^*(x,\hat a)\ge g_p(x)d^p.
\end{equation}

If $p=q$, then \eqref{eqn:tu_Q_err_ub_by_Lambda} and \eqref{eqn:tu_Q_err_lb_by_growth} imply $g_p(x)\le \Lambda_n(q)$. Therefore, whenever $\hat a \notin\A^*(x)$ we have $\1\{g_p(x)\le \Lambda_n(q)\}=1$, and the bound
\[
\ell_n(x)\le 2\delta_n \1\set{g_p(x)\le \Lambda_n(q)}
\]
follows from \eqref{eqn:tu_Q_err_bd_by_delta}.

If $p>q$ and $g_p(x)=0$, then when $q>0$ the convention gives $\Lambda_n(q)^\beta g_p(x)^{-\alpha}=+\infty$, so \eqref{eqn:tu_Q_err_bd_by_delta} yields the stated minimum bound; when $q=0$, we have $\alpha=0$ and $\beta=1$, so combining \eqref{eqn:tu_Q_err_bd_by_delta} and \eqref{eqn:tu_Q_err_ub_by_Lambda} gives $\ell_n(x)\le \min\set{2\delta_n,\Lambda_n(0)}$.

If $p>q$ and $g_p(x)>0$, then combining \eqref{eqn:tu_Q_err_ub_by_Lambda} and \eqref{eqn:tu_Q_err_lb_by_growth} yields $g_p(x)d^{p-q}\le \Lambda_n(q)$, i.e.
\[
d\le \crbk{\frac{\Lambda_n(q)}{g_p(x)}}^{1/(p-q)}.
\]
Substitute this into \eqref{eqn:tu_Q_err_ub_by_Lambda} to obtain
\[
\ell_n(x)\le \Lambda_n(q)^{1+q/(p-q)}g_p(x)^{-q/(p-q)}=\Lambda_n(q)^\beta g_p(x)^{-\alpha}.
\]
Taking the minimum with \eqref{eqn:tu_Q_err_bd_by_delta} yields the claim.
\end{proof}

\subsection{Proof of Theorem \ref{thm:continuous_action_unif_g_ub}}

\begin{proof}
By \eqref{eqn:pdl-bound} in Proposition \ref{prop:performance_diff},
\[
\reg(\hat\pi_n;\mu)
\le \frac{C_{\mrm{conc}}}{1-\gamma}\int_{\X}\ell_n(x) \lambda_\mu(dx).
\]
Thus it suffices to exhibit, for each $n$, a measurable random variable dominating $\int \ell_n d\lambda_\mu$ whose expectation admits the claimed rate.

Define a measurable function $g_p:\X\to[0,\infty)$ by $g_p(x)=g_\wedge$ on the $\lambda_\mu$-full-measure set where the assumed growth bound holds, and $g_p(x)=0$ otherwise. Then \eqref{eqn:Q_err_pgrowth_in_action} holds for all $x\in\X$ and $a\in\A$, while $g_p(x)\ge g_\wedge$ for $\lambda_\mu$-a.e.\ $x$.

Since $p>q$, Proposition \ref{prop:greedy_policy_err}(ii) implies
\[
\ell_n(x)\le \Lambda_n(q)^\beta g_p(x)^{-\alpha}
\qquad\text{for all }x\in\X,
\]
with $\alpha:=q/(p-q)$ and $\beta:=p/(p-q)$. Therefore,
\[
\int_{\X}\ell_n d\lambda_\mu
\le \Lambda_n(q)^\beta \int_{\X} g_p(x)^{-\alpha} \lambda_\mu(dx)
\le g_\wedge^{-\alpha}\Lambda_n(q)^\beta.
\]
Applying \eqref{eqn:pdl-bound} yields
\[
\reg(\hat\pi_n;\mu)
\le
\frac{C_{\mrm{conc}}}{1-\gamma}g_\wedge^{-\alpha}\Lambda_n(q)^\beta.
\]
Hence we may take
\[
R_n:=
\frac{C_{\mrm{conc}}}{1-\gamma}g_\wedge^{-\alpha}\Lambda_n(q)^\beta.
\]
Let $k_*:=\ceil{\beta}\vee 2$.
By monotonicity of $L^q$ norms and Assumption \ref{assump:envelop_moment_bounds},
\[
\begin{aligned}
ER_n
&=\frac{C_{\mrm{conc}}}{1-\gamma}g_\wedge^{-\alpha}E\sqbk{\Lambda_n(q)^\beta}\\
&\le \frac{C_{\mrm{conc}}}{1-\gamma}g_\wedge^{-\alpha}E\sqbk{\Lambda_n(q)^{k_*}}^{\beta/k_*}\\
&\le
Cn^{-\beta/2},
\end{aligned}
\]
for a constant $C<\infty$ depending on $\gamma,C_{\mrm{conc}},p,q,g_\wedge$, and $C_{\Lambda,k_*}$.
Since $\beta/2=p/(2(p-q))$, the result follows.
\end{proof}

\subsection{Proof of Theorem \ref{thm:continuous_action_ub}}

\begin{proof}
By \eqref{eqn:pdl-bound} in Proposition \ref{prop:performance_diff},
\begin{equation}\label{eqn:thm2_pdl_start}
\reg(\hat\pi_n;\mu)
\le \frac{C_{\mrm{conc}}}{1-\gamma}\int_{\X}\ell_n(x) \lambda_\mu(dx).
\end{equation}
As in Remark \ref{rmk:positive_delta_Lambda_and_everywhere_bound}, we work throughout with strictly positive envelopes $\delta_n,\Lambda_n(q)>0$, so all ratios used below are well defined. To simplify notation, when $p>q$ we define the exponents
\[
\alpha:=\frac{q}{p-q}, \quad\text{and}\quad \beta:=\frac{p}{p-q} = \alpha+1.
\]

We treat the cases in Theorem \ref{thm:continuous_action_ub} separately. Moreover, we split the case $q\leq p<q(m+1)$ into $p=q$ and $q<p<q(m+1)$.

\paragraph{Case $p=q$.}
By Proposition \ref{prop:greedy_policy_err}(i),
\[
\int_{\X}\ell_n(x) \lambda_\mu(dx)
\le 2\delta_n \lambda_\mu(\set{g_p\le \Lambda_n(q)}).
\]
By the margin-mass condition \eqref{eqn:margin_mass}, applied at $u=\Lambda_n(q)$,
\[
\lambda_\mu(\set{g_p\le \Lambda_n(q)})\le M\Lambda_n(q)^{1/m}.
\]
Therefore,
\[
\int_{\X}\ell_n(x) \lambda_\mu(dx)
\le 2M\delta_n\Lambda_n(q)^{1/m},
\]
and \eqref{eqn:thm2_pdl_start} yields
\[
\reg(\hat\pi_n;\mu)\le R_n,
\quad \text{with}\quad
R_n:=\frac{2MC_{\mrm{conc}}}{1-\gamma} \delta_n\Lambda_n(q)^{1/m}.
\]
Using Cauchy--Schwarz and Assumption \ref{assump:envelop_moment_bounds},
\[
E[\delta_n\Lambda_n(q)^{1/m}]
\le E[\delta_n^2]^{1/2} E[\Lambda_n(q)^{2/m}]^{1/2}.
\]
Let $k_1:=\ceil{2/m}\vee 2$.
By monotonicity of moments, $E[\Lambda_n(q)^{2/m}] \le E[\Lambda_n(q)^{k_1}]^{2/(mk_1)}$, hence
\[
ER_n
\le \frac{2MC_{\mrm{conc}}}{1-\gamma}
E[\delta_n^2]^{1/2} E[\Lambda_n(q)^{k_1}]^{1/(mk_1)}
\le C n^{-(m+1)/(2m)}.
\]

\paragraph{An auxiliary lemma.} For cases $q<p\le q(m+1)$, we first note that in this regime $\alpha\geq 1/m$. We will use the following lemma; its proof is given in Appendix \ref{section:proof:lemma:moment_truncation_bound}. 

\begin{lemma}
\label{lemma:moment_truncation_bound}
Let $G\ge 0$ be a random variable such that $\mathbb P(G\le t)\le M t^{1/m}$ for all $t\in[0,1]$.
Fix $\alpha\ge 1/m$, and let $u>0$ and $v>0$.
Interpret $G^{-\alpha}=+\infty$ on $\{G=0\}$.
\begin{enumerate}[label=\textup{(\roman*)},leftmargin=*]
\item If $\alpha>1/m$, then there exists $C_{\mrm{tr}}<\infty$, depending only on $(M,m,\alpha)$, such that
\[
E\sqbk{\min\set{2u,\ vG^{-\alpha}}}
\le C_{\mrm{tr}}\crbk{u^{1-1/(m\alpha)}v^{1/(m\alpha)} + v}.
\]
\item If $\alpha=1/m$, then
\[
E\sqbk{\min\set{2u,\ vG^{-\alpha}}}
\le (M+1)v + Mm v\max\set{0, \log\crbk{\frac{2u}{v}}}.
\]
\end{enumerate}
\end{lemma}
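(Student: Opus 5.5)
The natural approach is a layer-cake computation. Write
\[
E\sqbk{\min\set{2u, vG^{-\alpha}}}=\int_0^{2u}\mathbb P\crbk{vG^{-\alpha}>s}\,ds ,
\]
which is valid because the integrand is nonnegative and bounded by $2u$; on $\set{G=0}$ the minimum equals $2u$, consistent with $\mathbb P(vG^{-\alpha}>s)$ counting that event. For $s>0$ we have $\set{vG^{-\alpha}>s}=\set{G<(v/s)^{1/\alpha}}$. Setting $t(s):=(v/s)^{1/\alpha}$, the margin hypothesis gives $\mathbb P(G\le t(s))\le M t(s)^{1/m}=M (v/s)^{1/(m\alpha)}$, but only when $t(s)\le 1$, i.e.\ when $s\ge v$. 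For $s<v$ we simply bound the probability by $1$. This splits the integral at $s=v$, and the first piece (over $[0,\min\set{v,2u}]$) contributes at most $v$. This accounts for the additive $v$ terms in both (i) and (ii).

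For the second piece, if $2u\le v$ there is nothing more to do. Otherwise we integrate $M v^{1/(m\alpha)} s^{-1/(m\alpha)}$ over $[v,2u]$, and the two cases differ here.

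\textbf{Case (i), $\alpha>1/m$.} The exponent $\kappa:=1/(m\alpha)$ lies in $(0,1)$, so
\[
\int_v^{2u} s^{-\kappa}\,ds\le \frac{(2u)^{1-\kappa}}{1-\kappa}.
\]
The piece is therefore at most $\frac{M 2^{1-\kappa}}{1-\kappa}\,u^{1-\kappa}v^{\kappa}$. Adding the first piece gives the bound with $C_{\mrm{tr}}:=\max\set{1,M2^{1-\kappa}/(1-\kappa)}$, which depends only on $(M,m,\alpha)$.

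\textbf{Case (ii), $\alpha=1/m$.} Now $\kappa=1$, and
\[
\int_v^{2u} M v s^{-1}\,ds=Mv\log(2u/v).
\]
This is where the logarithm comes from. Taking the positive part covers the case $2u\le v$, and the total is $v+Mv\max\set{0,\log(2u/v)}$. This is already smaller than the stated $(M+1)v+Mmv\max\set{0,\log(2u/v)}$ when $m\ge1$.

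The main obstacle is the stated constant in (ii) when $m<1$: the extra factor $m$ in front of the logarithm then appears to make the target smaller than what the direct computation gives. I would first recheck whether the authors intended a change of variables in $t$ rather than $s$. Computing via $E[\cdots]=\int \mathbb P(G\le t)\cdot|d(v t^{-\alpha})|$ over $t\in[(v/2u)^{m},1]$ gives $\int M t^{1/m}\,\alpha v t^{-\alpha-1}dt = (M/m)v\log(\cdot)$ with $\log(1/t_0)=m\log(2u/v)$. This yields $Mv\log(2u/v)$ again, i.e.\ the same bound. So the stated form follows whenever $m\ge1$, and otherwise one should track where the factor $m$ arises (possibly from $\log$ of $t_0^{-1}$ versus $2u/v$). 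The plan's substance is unaffected either way: the crux is the split at $s=v$ forced by the restriction $t\le1$ in the margin hypothesis, together with the convergent-versus-logarithmic tail integral.
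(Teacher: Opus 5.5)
Your argument is correct and is essentially the paper's route. The paper splits at the crossover $s_0=(v/2u)^{1/\alpha}$ (its $s$) as $2u\,\mathbb{P}(G\le s_0)+v\,E[G^{-\alpha}\mathbf{1}\{G>s_0\}]$, treats $s_0\ge 1$ separately, and then runs a tail integral for $G^{-\alpha}$ cut at level $1$. That is the same truncation as your single layer-cake split at level $v$, which corresponds to $G=1$. Your bookkeeping is slightly tighter: you get $v$ rather than $(M+1)v$ in (ii), and a smaller $C_{\mathrm{tr}}$ in (i).

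On (ii), your suspicion is correct, and you can settle it rather than leave it open: as stated, (ii) is false when $m<1$. Take $M=1$ and $G=U^m$ with $U$ uniform on $[0,1]$. Then $\mathbb{P}(G\le t)=t^{1/m}$ and $G^{-1/m}=U^{-1}$. For $c:=2u/v\ge 1$,
\[
E\bigl[\min\{2u,\,v/U\}\bigr]=2u\cdot\tfrac{1}{c}+v\int_{1/c}^{1}\frac{dU}{U}=v(1+\log c).
\]
This exceeds the stated bound $2v+mv\log c$ as soon as $(1-m)\log c>1$; for example, $m=1/2$ and $c=e^{3}$ give $4v>3.5v$. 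The same example attains your bound $v+Mv\max\{0,\log(2u/v)\}$ exactly, so yours is the right form of (ii).

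The spurious factor $m$ comes from two slips in the paper's case $\alpha=1/m$:
\begin{itemize}
\item The upper limit $s_0^{-\alpha}$ is $s_0^{-1/m}$, not $s_0^{-m}$, so the integral equals $(M/m)\log(s_0^{-1})$ rather than $Mm\log(s_0^{-1})$.
\item Since $s_0=(v/2u)^m$, one has $\log(s_0^{-1})=m\log(2u/v)$, not $\log(2u/v)$.
\end{itemize}
Nothing downstream breaks. In the case $p=q(m+1)$ of Theorem~\ref{thm:continuous_action_ub}, one simply replaces $Mm$ by $M$ in $R_n$, and Lemma~\ref{lemma:critical_log} absorbs the constant.
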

We note that the proof of this lemma relies on optimally truncating the law of the random variable $G$ at small values. In particular, the ``tr'' subscript in $C_{\mrm{tr}}$ stands for truncation. Equipped with Lemma \ref{lemma:moment_truncation_bound}, we analyze $q<p<q(m+1)$ and $p = q(m+1)$ separately. 
\paragraph{Case $q<p<q(m+1)$.}
Note that $\alpha>1/m$ in this regime.
By Proposition \ref{prop:greedy_policy_err}(ii),
\[
\ell_n(x)\le \min\set{2\delta_n,\ \Lambda_n(q)^\beta g_p(x)^{-\alpha}}.
\]

Let $X$ be an auxiliary random variable with law $\lambda_\mu$ under $\mathbb P$ and set $G:=g_p(X)$. By \eqref{eqn:margin_mass},
\[
\mathbb P(G\le t)=\lambda_\mu(\{g_p\le t\})\le M t^{1/m},
\]
for all $t\ge 0$. So, applying Lemma \ref{lemma:moment_truncation_bound} (i) to $G$, with $u=\delta_n$ and $v=\Lambda_n(q)^\beta$, gives
\begin{equation}\label{eqn:tu_1pm_case_err_ub}
\begin{aligned}
\int_{\X}\ell_n d\lambda_\mu &\leq  \E\sqbk{\min\set{2\delta_n,\ \Lambda_n(q)^\beta G^{-\alpha}}}\\
&\le C_{\mrm{tr}}\crbk{\delta_n^{1-1/(m\alpha)}\Lambda_n(q)^{\beta/(m\alpha)}+\Lambda_n(q)^\beta}.
\end{aligned}
\end{equation}

Since $\alpha=q/(p-q)$,
\[
1-\frac{1}{m\alpha}=\frac{q(m+1)-p}{mq},
\qquad
\frac{\beta}{m\alpha}=\frac{p}{mq}.
\]
Therefore, combining \eqref{eqn:tu_1pm_case_err_ub} and \eqref{eqn:thm2_pdl_start}, we have $\reg(\hat\pi_n;\mu)\le R_n,$ where
\begin{equation}\label{eqn:tu_Rn_for_1pm}
R_n:=
\frac{C_{\mrm{conc}}C_{\mrm{tr}}}{1-\gamma}
\crbk{\delta_n^{(q(m+1)-p)/(mq)}\Lambda_n(q)^{p/(mq)}+\Lambda_n(q)^\beta}.
\end{equation}

To control the expectation of $R_n$, set
\[
a:=\frac{q(m+1)-p}{mq}\in(0,1),
\qquad
b:=\frac{p}{mq},
\qquad
k_2:=\ceil{\max\set{2,\frac{2p}{p+q(m-1)}}}.
\]
Note that $$\frac{b}{k_2}\leq \frac{p+q(m-1)}{2mq},$$ so $$\frac{a}{2}+\frac{b}{k_2}\le \frac{q(m+1)-p}{2mq} + \frac{p+q(m-1)}{2mq} = 1.$$
Applying the generalized H\"older inequality in Lemma \ref{lemma:generalized_holder} gives
\begin{align*}
E[\delta_n^a\Lambda_n(q)^b]
&\le
E[\delta_n^2]^{a/2} E[\Lambda_n(q)^{k_2}]^{b/k_2}\\
&\le C n^{-a/2}n^{-b/2}\\
&= C n^{-(m+1)/(2m)}.
\end{align*}
where the second step follows from Assumption \ref{assump:envelop_moment_bounds} and $C$ satisfies the dependence conditions in Theorem \ref{thm:continuous_action_ub}. 

For the second term in \eqref{eqn:tu_Rn_for_1pm}, let $k_\beta:=\ceil{\beta}\vee 2$.
Then by Jensen's inequality, 
\[
E[\Lambda_n(q)^\beta]
\le E[\Lambda_n(q)^{k_\beta}]^{\beta/k_\beta}
\le C n^{-\beta/2}.
\]
with some possibly different $C$ that also satisfies the dependence conditions in Theorem \ref{thm:continuous_action_ub}. Since $q<p<q(m+1)$ implies $$\frac{\beta}{2} = \frac{p}{2(p-q)}>\frac{m+1}{2m},$$ the term $E[\Lambda_n(q)^\beta]$ is of smaller order. Hence $ER_n\le C n^{-(m+1)/(2m)}$ where the $C$ is the max of the two cases.

\paragraph{Case $p=q(m+1)$.}
In this case $\alpha=1/m$. As in the previous case, Proposition \ref{prop:greedy_policy_err} (ii) yields
\[
\ell_n(x)\le \min\set{2\delta_n,\ \Lambda_n(q)^\beta g_p(x)^{-\alpha}}.
\]
Let $X\sim\lambda_\mu$ under $\mathbb P$ and set $G=g_p(X)$. Lemma \ref{lemma:moment_truncation_bound} (ii) with $u=\delta_n$ and $v=\Lambda_n(q)^\beta$ gives
\begin{align*}
\int_{\X}\ell_n d\lambda_\mu&= \E\sqbk{\min\set{2\delta_n,\ \Lambda_n(q)^\beta G^{-\alpha}} }\\
&\le (M+1)\Lambda_n(q)^\beta + Mm \Lambda_n(q)^\beta\max\set{0,\log\crbk{\frac{2\delta_n}{\Lambda_n(q)^\beta}}}.
\end{align*}
Therefore, $\reg(\hat\pi_n;\mu)\le R_n,$ where
\begin{equation}\label{eqn:tu_Rn_case_peqm}
R_n:=
\frac{C_{\mrm{conc}}}{1-\gamma}
\sqbk{(M+1)\Lambda_n(q)^\beta + Mm \Lambda_n(q)^\beta\max\set{0,\log\crbk{\frac{2\delta_n}{\Lambda_n(q)^\beta}}}}.
\end{equation}

To bound the expectation of the logarithmic term, we use the following lemma, proved in Appendix \ref{section:proof:lemma:critical_log}.

\begin{lemma}
\label{lemma:critical_log}
Suppose Assumption \ref{assump:envelop_moment_bounds} holds.  Fix $\beta>1$ and set $k_\beta:=\ceil{\beta+1}$.
Then for all $n\ge 2$,
\[
E\sqbk{\Lambda_n(q)^\beta\max\set{0,\log\crbk{\frac{2\delta_n}{\Lambda_n(q)^\beta}}}}
\le C n^{-\beta/2}\log n,
\]
for some $C<\infty$, depending only on $(\beta,C_\delta,C_{\Lambda,k_\beta})$.
\end{lemma}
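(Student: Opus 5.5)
Write $L:=\Lambda_n(q)$ and $D:=\delta_n$. The plan is to separate the logarithm into a piece that is essentially $\log n$ and pieces that are controlled by moments. Since $x\mapsto\max\{0,x\}$ is subadditive, we have
\[
\max\set{0,\log\crbk{\frac{2D}{L^\beta}}}\le \max\set{0,\log(2D\sqrt n)}+\max\set{0,\log\crbk{\frac{1}{L^\beta\sqrt n}}}.
\]
Call the two terms $T_1$ and $T_2$. It then suffices to bound $E[L^\beta T_1]$ and $E[L^\beta T_2]$ separately by $Cn^{-\beta/2}\log n$.

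\textbf{First term.} Use $\log y\le y$ for $y>0$, so that $\max\{0,\log(2D\sqrt n)\}\le 2D\sqrt n$. This gives $E[L^\beta T_1]\le 2\sqrt n\,E[L^\beta D]$. By Cauchy--Schwarz, $E[L^\beta D]\le E[D^2]^{1/2}E[L^{2\beta}]^{1/2}$, which would produce the rate $n^{-1/2}n^{-\beta/2}$ and hence $E[L^\beta T_1]\lesssim n^{-\beta/2}$.

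This route, however, needs the moment $E[L^{2\beta}]$, and the lemma only grants control of $C_{\Lambda,k_\beta}$ with $k_\beta=\lceil\beta+1\rceil$. I would therefore instead use Hölder with exponents $(2,\,k,\,r)$ chosen so that $\beta/k+1/2\le 1$. With the slack absorbed by an $L^\infty$-free factor, this reads
\[
E[L^\beta D]\le E[D^2]^{1/2}\,E[L^{k_\beta}]^{\beta/k_\beta}
\]
whenever $\tfrac12+\tfrac{\beta}{k_\beta}\le1$, i.e.\ $k_\beta\ge 2\beta$. If that fails, I would instead soften the logarithm using $\log y\le \epsilon^{-1}y^\epsilon$ with a small $\epsilon$. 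Then
\[
E[L^\beta T_1]\le \epsilon^{-1}(2\sqrt n)^\epsilon\,E[L^\beta D^\epsilon],
\]
and Hölder with exponents $2/\epsilon$ and $k_\beta/\beta$ applies once $\epsilon/2+\beta/k_\beta\le 1$. Since $\beta/k_\beta\le\beta/(\beta+1)<1$, a suitable $\epsilon$, for example $\epsilon=2/(\beta+1)$, is available. This yields $E[L^\beta D^\epsilon]\lesssim n^{-\epsilon/2-\beta/2}$, and after multiplying by $n^{\epsilon/2}$ we obtain $E[L^\beta T_1]\lesssim n^{-\beta/2}$ with no logarithm at all. This $\epsilon$-softening is the step I regard as the crux: it is what makes the bound work with only the moments $C_\delta$ and $C_{\Lambda,k_\beta}$.

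\textbf{Second term.} Set $s:=L^\beta\sqrt n$. On the event $\{s<1\}$ the function $s\mapsto s\log(1/s)$ is bounded by $e^{-1}$, so
\[
L^\beta T_2 = n^{-1/2}\, s\log(1/s)\,\1\{s<1\}\le e^{-1}n^{-1/2}.
\]
This is too crude, since it only gives order $n^{-1/2}$ rather than $n^{-\beta/2}$.

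I would therefore split this term instead at the threshold $L^\beta<n^{-\beta/2}$. Where $L^\beta\ge n^{-\beta/2}$, the factor $\log(1/(L^\beta\sqrt n))$ is at most $\tfrac{\beta-1}{2}\log n$, so this part contributes at most $\tfrac{\beta-1}{2}\log n\,E[L^\beta]\lesssim n^{-\beta/2}\log n$, using the moment bound via $k_\beta$ and Jensen. Where $L^\beta<n^{-\beta/2}$, the function $y\mapsto y\log(1/(y\sqrt n))$ is increasing in $y$ on the relevant range once $n^{-\beta/2}\le e^{-1}n^{-1/2}$, which holds since $\beta>1$ and $n\ge2$; the finitely many small $n$ where it fails are absorbed into $C$. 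Its value at the endpoint $y=n^{-\beta/2}$ is $n^{-\beta/2}\tfrac{\beta-1}{2}\log n$, so this part is also $O(n^{-\beta/2}\log n)$.

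Combining the two terms gives the claim, with $C$ depending only on $(\beta,C_\delta,C_{\Lambda,k_\beta})$.
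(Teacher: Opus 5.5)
Your proof is correct, but it takes a different route from the paper.

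The paper does not split the logarithm. It applies $\log_+ x\le x^{\theta}/\theta$ to the whole ratio $2\delta_n/\Lambda_n(q)^\beta$ with the $n$-dependent exponent $\theta_n=1/(1+\log n)$. It then uses generalized H\"older with exponents $2/\theta_n$ and $k_\beta/(\beta(1-\theta_n))$ and reads off the $\log n$ from the prefactor $1/\theta_n$; the leftover factor $n^{(\beta-1)\theta_n/2}$ stays below $e^{(\beta-1)/2}$.

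You instead factor the ratio at the natural scales,
\[
\frac{2\delta_n}{\Lambda_n^\beta}=(2\delta_n\sqrt n)\cdot(\Lambda_n^\beta\sqrt n)^{-1}.
\]
You soften only the $\delta_n$ factor, with the fixed exponent $\epsilon=2/(\beta+1)$. Fixed-exponent H\"older then suffices, and this piece is $O(n^{-\beta/2})$ with no logarithm. The $\Lambda_n$ factor you handle by a deterministic threshold at $\Lambda_n^\beta=n^{-\beta/2}$.

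What each route buys: yours is longer and needs a case split plus the monotonicity of $y\mapsto y\log(1/(y\sqrt n))$. In exchange, it shows the logarithm comes only from the typical scale mismatch $\log(n^{-1/2}/n^{-\beta/2})=\tfrac{\beta-1}{2}\log n$, not from the upper tail of $\delta_n$. The paper's route is a single H\"older step, whose only subtlety is checking that the constants stay uniform in $n$.

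One small correction to your second term. The condition $n^{-\beta/2}\le e^{-1}n^{-1/2}$ is equivalent to $n\ge e^{2/(\beta-1)}$, so it does not hold for all $n\ge 2$ when $\beta$ is close to $1$. Your fallback still works, and explicitly so: for the remaining $n$, the crude bound gives
\[
e^{-1}n^{-1/2}=e^{-1}n^{(\beta-1)/2}\,n^{-\beta/2}<n^{-\beta/2}\le \frac{n^{-\beta/2}\log n}{\log 2}.
\]
So the absorption costs only a universal constant.
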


Moreover, with $k_\beta=\ceil{\beta+1}$,
\[
E[\Lambda_n(q)^\beta]
\le E[\Lambda_n(q)^{ k_\beta}]^{\beta/ k_\beta}
\le C n^{-\beta/2}.
\]
Since $\beta/2=(m+1)/(2m)$, we obtain $ER_n\le C n^{-(m+1)/(2m)}\log n$ for some possibly different $C < \infty$ that satisfies the requirement of Theorem \ref{thm:continuous_action_ub} and depends on $\set{C_{\Lambda,k}:k\geq 2}$ only through $C_{\Lambda,k_\beta}$. 

\paragraph{Case $p>q(m+1)$.}
In this case, $\alpha\in[0,1/m)$.
As in the previous cases, Proposition \ref{prop:greedy_policy_err} (ii) yields
\begin{equation}\label{eqn:tu_ln_bd_pgm}
\begin{aligned}
\ell_n(x)&\le \min\set{2\delta_n,\ \Lambda_n(q)^\beta g_p(x)^{-\alpha}}\\
&\le \Lambda_n(q)^\beta g_p(x)^{-\alpha}\1\set{g_p(x) > 0} + 2\delta_n\1\set{g_p(x) = 0}.   
\end{aligned}
\end{equation}

We will use the following lemma, whose proof is given in Appendix \ref{section:proof:lemma:inv_moment}.

\begin{lemma}
\label{lemma:inv_moment}
Let $G\ge 0$ satisfy $\mathbb P(G\le u)\le M u^{1/m}$ for all $u\in[0,1]$.
Fix $\alpha\in [0,1/m)$.
Then
\[
E\sqbk{G^{-\alpha} \1\{G>0\}}\le 1+\frac{Mm\alpha}{1-m\alpha}.
\]
\end{lemma}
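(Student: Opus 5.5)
We prove Lemma \ref{lemma:inv_moment} with the layer-cake (tail-integral) representation of $E[G^{-\alpha}\1\{G>0\}]$. The case $\alpha=0$ is trivial: the left side is $\mathbb P(G>0)\le 1$, which matches the right side. So assume $\alpha\in(0,1/m)$ and set $Y:=G^{-\alpha}\1\{G>0\}$, a nonnegative (possibly extended-valued a priori) random variable. Then
\[
E[Y]=\int_0^\infty \mathbb P(Y>s)\,ds\le 1+\int_1^\infty \mathbb P(Y>s)\,ds,
\]
where we bounded $\mathbb P(Y>s)\le 1$ on $[0,1]$.

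For $s\ge 1$, the event $\{Y>s\}$ equals $\{0<G<s^{-1/\alpha}\}$, and $s^{-1/\alpha}\in(0,1]$. This lies in the range where the hypothesis $\mathbb P(G\le u)\le Mu^{1/m}$ for $u\in[0,1]$ applies, so
\[
\mathbb P(Y>s)\le \mathbb P\crbk{G\le s^{-1/\alpha}}\le M s^{-1/(m\alpha)}.
\]

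Since $\alpha<1/m$, the exponent $1/(m\alpha)$ exceeds $1$, so the tail is integrable:
\[
\int_1^\infty M s^{-1/(m\alpha)}\,ds=\frac{M}{1/(m\alpha)-1}=\frac{Mm\alpha}{1-m\alpha}.
\]
Combining the two pieces gives $E[Y]\le 1+\frac{Mm\alpha}{1-m\alpha}$, as claimed.

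There is no real obstacle. The only points needing care are to restrict the margin bound to $u\le 1$, which is why the integral is split at $s=1$, and to observe that the indicator $\1\{G>0\}$ removes the atom at $G=0$, where $G^{-\alpha}=+\infty$ by convention. Measurability of $Y$ is immediate, since $Y$ is a Borel function of $G$.
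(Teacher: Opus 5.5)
Your proof is correct and follows essentially the same route as the paper: the tail-integral identity, bounding the contribution from $[0,1]$ by $1$, then applying the margin-mass bound $\mathbb P(0<G<t^{-1/\alpha})\le M t^{-1/(m\alpha)}$ for $t\ge 1$ and integrating. It also treats the case $\alpha=0$ separately as trivial, as the paper does.
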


As before, let $X\sim \lambda_\mu$ be an auxiliary random variable with law $\lambda_\mu$ under $\mathbb P$ and set $G:=g_p(X)$. Then $\mathbb P(G\le u)\le M u^{1/m}$ on $u\in[0,1]$ by \eqref{eqn:margin_mass}. Therefore, Lemma \ref{lemma:inv_moment} implies
\[
C_{g}:=\E\sqbk{G^{-\alpha} \1\{G>0\}}<\infty.
\]
Therefore, from \eqref{eqn:tu_ln_bd_pgm} and Assumption \ref{assump:p_growth_margin_mass}, we have
\[
\begin{aligned}
\int_{\X}\ell_n d\lambda_\mu
&\le \Lambda_n(q)^\beta\int_{\X}g_p(x)^{-\alpha}\1\set{g_p(x) > 0}  \lambda_\mu(dx) + 2\delta_n\lambda_\mu(\set{g_p =0})\\
&= \Lambda_n(q)^\beta \E\sqbk{G^{-\alpha} \1\{G>0\}} + 2\delta_n\lambda_\mu(\set{g_p \leq 0})\\
&\le C_{g}\Lambda_n(q)^\beta.
\end{aligned}
\]
This, combined with \eqref{eqn:thm2_pdl_start}, yields $\reg(\hat\pi_n;\mu)\le R_n$ where 
\[
R_n:=
\frac{C_{\mrm{conc}}C_{g}}{1-\gamma}\Lambda_n(q)^\beta.
\]
Let $k_\beta:=\ceil{\beta}\vee 2$. Then
\[
E[\Lambda_n(q)^\beta]
\le E[\Lambda_n(q)^{k_\beta}]^{\beta/k_\beta}
\le C n^{-\beta/2}
=
C n^{-p/(2(p-q))},
\]
where $C$ depends on $\set{C_{\Lambda,k}:k\ge2}$ only through $C_{\Lambda,k_\beta}$. 
This completes the proof of Theorem \ref{thm:continuous_action_ub}.
\end{proof}

\section{Minimax Lower Bounds and Optimality}\label{section:minimax-lower-bounds}

In this section, we prove matching minimax lower bounds on the expected regret for policy learning that complement the upper bounds in Theorems~\ref{thm:continuous_action_unif_g_ub} and \ref{thm:continuous_action_ub}. The lower bound is based on MDP instances that satisfy the assumptions of the upper bounds in a strong sense. 

Specifically, for any fixed $\gamma\in(0,1)$, $q\in(0,1]$, $p>q$, and $m\geq 0$, we define two families of $\theta\in\Theta=[-1,1]$-indexed MDP instances, denoted by $\set{\cM_\theta^+,\cM_\theta^-:\theta\in\Theta}$. To simplify notation, we suppress the dependence of $\cM_\theta^+$ and $\cM_\theta^-$ on $(\gamma,m,p,q)$, and use these families of instances only for fixed $(\gamma,m,p,q)$. The main result of this section can be understood as follows.

\paragraph{Informal version of Theorem \ref{thm:minimax_lb}:} We show that the following (informal) statement holds for each fixed feasible $(\gamma,m,p,q)$.
\begin{itemize}
    \item Assumptions \ref{assump:regularity}, \ref{assump:concentrability}, and \ref{assump:p_growth_margin_mass} holds for all instances in $\set{\cM_\theta^+,\cM_\theta^-:\theta\in\Theta}$ with the corresponding $(m,p)$. 
    \item There exists a data generating process (to be rigorously defined later) from which we can construct estimators $\widehat Q_n^+$ and $\widehat Q_n^-$  so that their estimation error compared to the corresponding optimal Q-functions $Q_\theta^+$ and  $Q_\theta^-$ satisfies Assumption \ref{assump:envelop_moment_bounds} with H\"older exponent $q$. 
    \item Given the data generating process, the minimax expected regret for optimal policy learning over the family $\set{\cM_\theta^+:\theta\in\Theta}$ is lower bounded by $\Omega\crbk{n^{-(m+1)/(2m)}}$, while over the family $\set{\cM_\theta^-:\theta\in\Theta}$ it is lower bounded by $\Omega\crbk{n^{-p/(2(p-q))}}$.
\end{itemize}
Combining the two cases, we obtain lower bounds that match the upper bounds in Theorems~\ref{thm:continuous_action_unif_g_ub} and \ref{thm:continuous_action_ub}, with only a $\log n$ mismatch in the case $p=q(m+1)$.

\begin{remark}
We note that the performance of a policy learning algorithm naturally depends on the quality of the data: if the data generating process reveals the optimal Q-function in one step, then one can achieve zero regret with $n=1$. Thus, the lower bound should be interpreted as follows: even if one can construct an estimator of the optimal Q-function satisfying Assumption~\ref{assump:envelop_moment_bounds}, one cannot in general guarantee a faster convergence rate than the stated lower bounds without additional assumptions on the MDP instance or the data generating process.
\end{remark}

Next, we establish the minimax lower bound in a mathematically rigorous manner by carefully specifying the hard instance families, the data-generating processes, the admissible learning algorithms, and the minimax expected regret criterion.

\subsection{Margin-Mass and Growth-Rate Limited Families of MDPs}\label{section:regret_instance}

In this section, we construct the families of MDPs $\set{\cM_\theta^+,\cM_\theta^-:\theta\in\Theta}$ that are used in the subsequent proofs of minimax lower bounds. We note that $\set{\cM_\theta^+:\theta\in\Theta}$ will correspond to the \textit{$m$-margin-mass-limited} family, hence leading to a lower bound of $\Omega\crbk{n^{-(m+1)/(2m)}}$. On the other hand $\set{\cM_\theta^-:\theta\in\Theta}$ is \textit{$p$-growth-limited} leading to a lower bound of $\Omega\crbk{n^{-p/(2(p-q))}}$. 

Specifically, all the MDP instances we construct will have common state, action, and noise spaces, transition function, and initial distribution.  The only differences are the reward structures and noise distributions. Specifically, we define the state space to be $\X := [0,1]\cup \set{2}$, where $2$ is considered a terminal absorbing state. The action and noise spaces are $\A := [-1,1]$ and $\W = \set{-1,1}\times[0,1]$. The transition function is the deterministic absorption $f(x,a,w) = 2$ for all $(x,a,w)\in \X\times\A\times \W$. The driving noise of the MDP is an i.i.d. sequence $\set{W_i:i\ge 1}$, where each $W_i = (Y_i,D_i)\in\W$. We will use a uniform initial distribution $\mu$ on $[0,1]$.

The families $\set{\cM_\theta^+:\theta\in\Theta}$ are indexed by a parameter $\theta\in\Theta \subset [-1,1]$. To reflect the common setting in which the reward is specified by the modeler, and where knowledge of the reward function alone does not allow direct inference of $\theta$, we deliberately design the reward functions so that they are independent of $\theta$. Under these constructions, the instances $\cM_\theta^+$ and $\cM_\theta^-$ can therefore be identified with the respective pairs consisting of a reward function and a law of $W$, namely $(r^+,\psi_\theta^+)$ and $(r^-,\psi_\theta^-)$, where the corresponding reward functions and probability laws are defined as follows.

\subsubsection*{The $\set{\cM_\theta^+:\theta\in\Theta}$ Family}
For $\cM_\theta^+$, we consider the function \begin{equation}\label{eqn:def_hp}
 h_p(a)
:=\frac{(1+a)^p}{(1+a)^p+(1-a)^p}.
\end{equation}
Then, the reward function $r^+$ is defined by
$$r^+(x,a,(y,d)):=x^m h_p(a)+ y\1\set{x\leq d}\crbk{1- h_p(a)},\quad r^+(2,a,(y,d)) = 0$$
for $x\in[0,1]$, $a\in \A$, and $(y,d)\in \W$. We note an important property of $h_p$ underlying this construction: $h_p(a)=\Theta((1+a)^p)$ as $a\da -1$, and $1-h_p(a)=\Theta((1-a)^p)$ as $a\ua 1$. 

In this case, the law \begin{equation}\label{eqn:psitheta+}
    \mrm{Law}_{\theta}^+(W_1) = \psi_\theta^+
: = \frac{1-\theta}{2}\delta_{(-1,d_\theta^+)} + \frac{1+\theta}{2}\delta_{(1,d_\theta^+)},\quad d_\theta^+ = \crbk{\frac{|\theta|}{2}}^{1/m}.
\end{equation} 
 Thus, under $\psi_\theta^+$, $Y = \pm1$ with probability $(1\pm \theta)/2$ and $D = d^+_\theta$ almost surely. 

Since the controlled Markov chain transitions deterministically to state 2, where the associated reward is equal to 0, it follows that the optimal Q function is
\begin{equation}
    Q_\theta^+(x,a) = \int r^+(x,a,w)\psi_\theta(dw) = x^m h_p(a) + \theta\1\set{x\leq d^+_\theta}(1-h_p(a)).\label{eqn:Qtheta+}
\end{equation}
We note that $h_p$ is designed to enforce the $p$-growth condition. The key intuition behind this construction leading to the lower bound rate is that, when $\theta$ is small and $x\leq d_\theta^+$, identifying the optimal action is difficult, since the optimum is $+1$ or $-1$ depending on the sign of $\theta$. In particular, the hard margin width in this case is $d_\theta^+\asymp |\theta|^{1/m}$, while the value gap incurred by taking the wrong action is of order $|\theta|$. Therefore, if $|\theta|\asymp n^{-1/2}$, the sign of $\theta$ cannot be identified reliably, so the wrong action is chosen with constant probability. This yields a total state-averaged error (i.e., regret) of order
$\Omega(|\theta|d_\theta^+)=\Omega(n^{-(m+1)/(2m)}).$

\subsubsection*{The $\set{\cM_\theta^-:\theta\in\Theta}$ Family}

For $\cM_\theta^-$, the reward function $r^-$ is defined by
$$r^-(x,a,(y,d)):= y\ \sgn(a)(|a|^q\wedge d^q) - ||a|-d|^p,\quad r^-(2,a,w) = 0$$
for $x\in[0,1]$ and $(a,(y,d))\in\A\times \W$, where $(x\wedge c) = \min{\set{x,c}}$. 

Then, define the similar law
\begin{equation}\label{eqn:psitheta-}
    \mrm{Law}_{\theta}^-(W_1) = \psi_\theta^-
: = \frac{1-\theta}{2}\delta_{(-1,d_\theta^-)} + \frac{1+\theta}{2}\delta_{(1,d_\theta^-)},\quad d_\theta^-:= |\theta|^{\frac{1}{p-q}}
\end{equation}
where the only difference compared to \eqref{eqn:psitheta+} is that the deterministic $D_1$ concentrates at $d_\theta^-$.

In this case, the optimal Q-function is given by \begin{equation}
Q_\theta^-(x,a)    =\int r^-(x,a,w)\psi_\theta(dw) = \theta\ \sgn(a)(|a|^q\wedge (d_\theta^-)^q) - ||a|-d_\theta^-|^p.\label{eqn:Qtheta-}
\end{equation}

We note that the intuition leading to the lower bound rate in this construction is as follows. The term $-||a|-d_\theta^-|^p$ creates two candidate maximizers at $a=\pm \, d_\theta^-$ and enforces the $p$-growth. The term
$\theta\,\sgn(a)\crbk{|a|^q\wedge (d_\theta^-)^q}$
then breaks the tie by favoring the action with $\sgn(a)=\sgn(\theta)$, while preserving $q$-H\"older continuity near $0$. Thus, when $\theta$ is small, the main difficulty is to determine which of $+d_\theta^-$ and $-d_\theta^-$, is optimal. The value loss from choosing the wrong branch is of order $|\theta|(d_\theta^-)^q=|\theta|^{p/(p-q)}.$
Therefore, if $|\theta|\asymp n^{-1/2}$, the sign of $\theta$ cannot be identified reliably, so the wrong branch is selected with constant probability. Since this ambiguity occurs uniformly over the state space, the resulting regret is of order $\Omega\bigl(|\theta|(d_\theta^-)^q\bigr)=\Omega\bigl(n^{-p/(2(p-q))}\bigr).$

\begin{remark}\label{rmk:static_case} Observe that when $x=2$, both rewards are $0$, and the transition satisfies $f(x,a,w)=2$ everywhere. Hence $X^\pi_k=2$ for all $k\geq 1$, so no further reward is collected after stage $0$. Thus, the policy optimization problem effectively reduces to a single-stage contextual stochastic optimization problem, where the context is the realized initial state. Therefore, together with the lower bounds in Theorem~\ref{thm:minimax_lb}, this shows that the optimal policy learning rates in Theorems~\ref{thm:continuous_action_unif_g_ub} and \ref{thm:continuous_action_ub} also apply to related stochastic optimization settings.
\end{remark} 

Next, we show that the instances defined above--specifically, their optimal Q-functions in \eqref{eqn:Qtheta+} and \eqref{eqn:Qtheta-}--satisfy Assumptions~\ref{assump:regularity}, \ref{assump:concentrability}, and \ref{assump:p_growth_margin_mass}.

\begin{proposition}\label{prop:instances_check_assumptions_134}
Fix any $\gamma\in(0,1)$, H\"older exponent $q\in(0,1]$, growth exponent $p>q$, and margin-mass exponent $m>0$. Then, any MDP instance within $\set{\cM_\theta^+,\cM_\theta^-:\theta\in\Theta}$  satisfies Assumptions \ref{assump:regularity}, \ref{assump:concentrability}, and \ref{assump:p_growth_margin_mass} with parameters $(m,p,q)$ and reference measure $\lambda_\mu:= \frac{1}{2}\mu + \frac{1}{2}\delta_2$. 
\end{proposition}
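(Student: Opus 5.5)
The plan is to exploit the one-step absorbing structure, which reduces every instance to a static problem and makes each assumption an explicit computation. Since $f\equiv 2$ and $r^\pm(2,\cdot,\cdot)=0$, under any $\pi\in\Pi_{\mrm{HD}}$ we have $X^\pi_t=2$ for $t\ge1$. Hence $V^\pi(x)=\bar r(x,\pi_0(x))$, and so $V^*(2)=0$ and $V^*(x)=\sup_a \bar r(x,a)$ for $x\in[0,1]$.

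\textbf{Assumption \ref{assump:regularity}.} By \eqref{eqn:def_Qstar}, $Q^*(x,a)=\bar r(x,a)+\gamma V^*(2)=\bar r(x,a)$, so $Q^*_\theta$ is given by \eqref{eqn:Qtheta+} and \eqref{eqn:Qtheta-} on $[0,1]\times\A$ and vanishes at $x=2$. I will check the required properties directly from these formulas.
\begin{itemize}
\item Boundedness holds since $|r^\pm|\le 1+2^p$.
\item Joint Borel measurability is clear, since only indicators and continuous functions appear.
\item Continuity in $a$ holds because $h_p$, $\sgn(a)(|a|^q\wedge d^q)$ and $||a|-d|^p$ are continuous; the jump of $\sgn$ at $0$ is killed by the factor $|a|^q\wedge d^q$, which vanishes at $a=0$.
\end{itemize}
Compactness of $\A$ then gives $V^*=\max_a Q^*$. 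Measurability of $V^*$ follows because the supremum over $a\in\A$ can be taken over rationals, by continuity.

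\textbf{Assumption \ref{assump:concentrability}.} For a stationary $\pi$, Definition~\ref{def:discounted_occupany_meas} gives $\nu^\pi_\mu=(1-\gamma)\mu+\gamma\delta_2$. With $\lambda_\mu=\tfrac12\mu+\tfrac12\delta_2$, this measure has density $2(1-\gamma)$ on $[0,1]$ and $2\gamma$ at $\{2\}$. So the assumption holds with $C_{\mrm{conc}}=2$, uniformly in $\pi$.

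\textbf{Assumption \ref{assump:p_growth_margin_mass}, general setup.} At $x=2$, $Q^*\equiv0$, so $\A^*(2)=\A$ and the growth inequality is trivial. I set $g_p(2):=1$. The atom $\lambda_\mu(\{2\})=\tfrac12$ then enters $\{g_p\le t\}$ only when $t\ge1$, where it is absorbed by taking $M\ge1$.

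\textbf{Growth and margin for the $+$ family.} For $x\in[0,1]$, write $c:=\theta\1\{x\le d^+_\theta\}$, so that $Q^+_\theta=c+(x^m-c)h_p(a)$ is affine in the increasing function $h_p$, with $h_p(-1)=0$ and $h_p(1)=1$. This determines the maximizers:
\begin{itemize}
\item if $x^m>c$, then $\A^*=\{1\}$ and $V^*-Q^*=(x^m-c)(1-h_p(a))$;
\item if $x^m<c$, then $\A^*=\{-1\}$ and $V^*-Q^*=(c-x^m)h_p(a)$;
\item if $x^m=c$, then $\A^*=\A$.
\end{itemize}
Since $(1+a)^p+(1-a)^p\le 2^{p+1}$, we get $1-h_p(a)\ge 2^{-p-1}(1-a)^p$ and $h_p(a)\ge 2^{-p-1}(1+a)^p$. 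Therefore \eqref{eqn:Q_err_pgrowth_in_action} holds with $g_p(x)=2^{-p-1}|x^m-c|$.

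For the margin, split $[0,1]$ at $d^+_\theta$.
\begin{itemize}
\item On $x\le d^+_\theta$ we have $x^m\le|\theta|/2$, so $|x^m-\theta|\ge|\theta|/2$. This region therefore contributes to $\{g_p\le t\}$ only if $t\ge 2^{-p-2}|\theta|$, and then its mass is $d^+_\theta=(|\theta|/2)^{1/m}\le (2^{p+1}t)^{1/m}$.
\item On $x>d^+_\theta$ we have $g_p=2^{-p-1}x^m$, so $\{g_p\le t\}$ there has mass at most $(2^{p+1}t)^{1/m}$.
\end{itemize}
This yields \eqref{eqn:margin_mass} with $M$ depending only on $(m,p)$, uniformly in $\theta$. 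The case $\theta=0$ is included.

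\textbf{Growth and margin for the $-$ family.} Here $Q^-_\theta$ does not depend on $x$. For $\theta=0$ we have $d^-_\theta=0$, $Q=-|a|^p$ and $\A^*=\{0\}$, so the growth holds with $g_p\equiv1$. For $\theta>0$ (the case $\theta<0$ is symmetric), write $d=d^-_\theta$ and note $\theta=d^{p-q}$, so $\theta d^q=d^p$.

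The first term of $Q^-_\theta$ is at most $\theta d^q$, and the second is at most $0$. Both bounds are attained only at $a=d$, so $\A^*=\{d\}$ and $V^*=d^p$. I then check the growth on each side of $0$.
\begin{itemize}
\item For $a\ge0$: $V^*-Q^*\ge||a|-d|^p=\dist(a,\A^*)^p$.
\item For $a<0$: $V^*-Q^*\ge d^p+||a|-d|^p\ge 2^{1-p}\bigl(d+||a|-d|\bigr)^p$, using convexity or concavity of $s\mapsto s^p$ according to $p\ge1$ or $p<1$. Since $\dist(a,\A^*)=|a|+d\le 2\bigl(d+||a|-d|\bigr)$, this gives $V^*-Q^*\ge 4^{-p}\,\dist(a,\A^*)^p$ up to a constant depending only on $p$.
\end{itemize}
Thus $g_p$ can be taken to be a constant $g_\wedge(p)>0$ on $[0,1]$. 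The margin bound is then trivial: $\lambda_\mu(\{g_p\le t\})\le\1\{t\ge g_\wedge\wedge1\}\le (t/(g_\wedge\wedge1))^{1/m}$.

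The main obstacles are the wrong-sign branch $a<0$ in the $-$ family, where the $p$-growth must be extracted from the identity $\theta d^q=d^p$, and keeping the $+$ family margin constant uniform in $\theta$. Everything else is routine.
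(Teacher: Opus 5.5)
Your proof is correct and follows the paper's route: the one-stage reduction, $\nu_\mu^\pi=(1-\gamma)\mu+\gamma\delta_2$ with $C_{\mrm{conc}}=2$, the explicit maximizers with the $h_p$ lower bounds, and the bound $d^p+\bigl||a|-d\bigr|^p$ on the wrong-sign branch of the $-$ family. The differences are cosmetic: you keep the sharper $\theta$-dependent coefficient $2^{-p-1}|x^m-c|$ (the paper weakens it to $2^{-(p+1)}x^m$ so the margin check needs no split at $d_\theta^+$), and you use an explicit constant with a separate $\theta=0$ case instead of the paper's $c_p$. One small slip: for $p<1$, concavity gives $u^p+v^p\le 2^{1-p}(u+v)^p$, the reverse of what you wrote. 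However, $u^p+v^p\ge\max\{u,v\}^p\ge 2^{-p}(u+v)^p$ holds for every $p>0$ and still yields your final constant $4^{-p}$.
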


Before presenting the proof, we note that the verification process in the proof reflects the careful design of the hard instance family. In particular, these verifications complement the intuition underlying the constructions that lead to the lower bound rates (cf. the discussion after \eqref{eqn:Qtheta+} and \eqref{eqn:Qtheta-}).

\begin{proof}
Fix $\theta\in\Theta=[-1,1]$. We verify Assumptions
\ref{assump:regularity}, \ref{assump:concentrability}, and \ref{assump:p_growth_margin_mass}
for both instances $\cM_\theta^+$ and $\cM_\theta^-$.

\paragraph{Verifying Assumption \ref{assump:regularity}.} By the form of $(r^+,Q_\theta^+)$ and $(r^-,Q_\theta^-)$, the boundedness, measurability, and continuity requirements in Assumption~\ref{assump:regularity} are clearly satisfied. Moreover, by the one-stage nature of the dynamics and rewards noted in Remark~\ref{rmk:static_case}, a policy is optimal if and only if it maximizes the reward at time $0$. Hence, \eqref{eqn:Vstar_max_Qstar} holds in both cases.

\paragraph{Verifying Assumption \ref{assump:concentrability}.}
Fix any MDP instance $\cM_\theta^\dagger$ where $\dagger$ can be $+$ or $-$. Let $\pi:\X\to\A$ be any stationary policy
and let $\nu_\mu^\pi$ be the discounted occupancy measure as in Definition \ref{def:discounted_occupany_meas} with the
dependence on $\cM_\theta^\dagger$ suppressed.

As noted in Remark \ref{rmk:static_case}, under both families of MDPs, $X^\pi_0\sim\mu$ and $X^\pi_k=2$ for all $k\ge1$.
Hence, the state's marginal distribution at time $0$ is $\mu$, while for each $k\ge1$ it is $\delta_2$.
Therefore, for $\dagger = +$ and $-$,
\begin{align*}
\nu_\mu^\pi&
=(1-\gamma)\mrm{Law}_{\cM_\theta^\dagger}(X^\pi_0)+
(1-\gamma)\sum_{k=1}^\infty \gamma^k \mrm{Law}_{\cM_\theta^\dagger}(X^\pi_k)\\
&=
(1-\gamma)\mu + (1-\gamma)\sum_{k=1}^\infty \gamma^k \delta_2\\
&=
(1-\gamma)\mu+\gamma\delta_2,
\end{align*}
independent of $\pi$ and $\theta$.

Recall that $\lambda_\mu=\frac{1}{2}\mu+\frac{1}{2}\delta_2$ and $\gamma\in(0,1)$. Clearly $\nu_{\mu}^\pi\ll \lambda_\mu$ and
\[
\frac{d \nu_{\mu}^\pi}{d\lambda_\mu}(x)
=
\begin{cases}
2(1-\gamma) & \text{if }x\in[0,1],\\
2\gamma & \text{if }x=2,
\end{cases}
\]
is a version of the likelihood ratio. Consequently,
\[
\esssup_{\lambda_\mu}\frac{d \nu_{\mu}^\pi}{d\lambda_\mu}\le 2;
\]
i.e., Assumption~\ref{assump:concentrability} holds for both families with $C_{\mrm{conc}} = 2$.

\paragraph{Verifying Assumption \ref{assump:p_growth_margin_mass} for $\cM_\theta^+$.}
We work with $Q_\theta^+$ in \eqref{eqn:Qtheta+}. Before we verify Assumption \ref{assump:p_growth_margin_mass} for $\cM_\theta^+$, we establish some auxiliary facts. Note that for $x\in[0,1],\ a\in\A$,
\begin{equation}\label{eqn:Qtheta+_cases}
Q_\theta^+(x,a)=
\begin{cases}
x^m h_p(a), & x>d_\theta^+,\\
\theta+(x^m-\theta)h_p(a), & x\le d_\theta^+,
\end{cases}
\end{equation}
and $Q_\theta^+(2,a)=0$. Moreover, from \eqref{eqn:def_hp} we have for $a\in(-1,1)$,
\[
h_p(a)=\frac{1}{1+\crbk{\frac{1-a}{1+a}}^p},
\]
as well as $h_p(-1)=0$ and $h_p(1)=1$. Note that $a\to \frac{1-a}{1+a}$ is decreasing on $(-1,1]$, hence $h_p$ is
increasing on $[-1,1]$. With these preparations, we proceed to verify Assumption \ref{assump:p_growth_margin_mass}.

\smallskip
\noindent\emph{Optimal action sets and values.} We compute the optimal action set $\A_\theta^+(x)$ and optimal value function $V^+_\theta(x)$ for different cases of $x\in[0,1]$ and $\theta\in[-1,1]$.
\begin{itemize}
\item If $x>d_\theta^+$, then $Q_\theta^+(x,a)=x^m h_p(a)$ is maximized by $a=1$, so
$\A_\theta^+(x)=\{1\}$ and $V_\theta^+(x)=x^m$.
\item If $x\le d_\theta^+$ and $\theta>0$, then $x^m\le \theta/2$ so $x^m-\theta<0$, hence
$\theta+(x^m-\theta)h_p(a)$ is maximized by taking $h_p(a)=0$, i.e.\ $a=-1$.
Thus $\A_\theta^+(x)=\{-1\}$ and $V_\theta^+(x)=\theta$.
\item If $x\le d_\theta^+$ and $\theta<0$, then $x^m-\theta>0$, so the expression is maximized by $h_p(a)=1$, i.e.\ $a=1$.
Thus $\A_\theta^+(x)=\{1\}$ and $V_\theta^+(x)=x^m$.
\item If $\theta=0$, then $d_\theta^+=0$. For $x>0$, we are in the first case and $\A_0^+(x)=\{1\}$.
At $x=0$, $Q_0^+(0,a)=0$ for all $a$, hence $\A_0^+(0)=\A$.
\end{itemize}
Moreover, at $x=2$, $Q_\theta^+(2,\cd)\equiv 0$, so $\A_\theta^+(2)=\A$. 

Thus, the optimal action sets and the corresponding optimal values can be summarized as follows:
\begin{equation}\label{eqn:tu_A_V_breakdown}
\renewcommand{\arraystretch}{1.2}
\begin{array}{lll}
\A_\theta^+(x) \hspace{0.1in}& V_\theta^+(x)\hspace{0.1in} & \text{Condition} \\ \hline
\{1\}  & x^m    & x\in[0,1], \;\theta<0 \ \text{or}\ x>d_\theta^+\\
\{-1\} & \theta &x\in[0,1], \; \theta>0,\; x\le d_\theta^+\\
\A     & 0      & x=2 \ \text{or}\ (\theta,x)=(0,0)
\end{array}
\end{equation}

\smallskip
\noindent\emph{The $p$-growth condition.}
We claim that for all $x\in[0,1]$ and $a\in\A$,
\begin{equation}\label{eqn:pgrowth_plus_new}
V_\theta^+(x)-Q_\theta^+(x,a)\;\ge\;2^{-(p+1)}x^m\,\dist\!\big(a,\A_\theta^+(x)\big)^p.
\end{equation}
Also, $V_\theta^+(2)-Q_\theta^+(2,a)=0$ and $\dist(a,\A_\theta^+(2))=0$. Therefore, the $p$-growth condition \eqref{eqn:Q_err_pgrowth_in_action} holds with
\begin{equation}\label{eqn:gplus_new}
g_{p,\theta}^+(x):=
\begin{cases}
2^{-(p+1)}x^m,& x\in[0,1],\\
2,& x=2.
\end{cases}
\end{equation}

It remains to prove \eqref{eqn:pgrowth_plus_new}. To show this, we first note that for all $a\in[-1,1]$, $(1\pm a)^p\le 2^p$. Thus,
\begin{equation}\label{eqn:hp_basic_bounds}
\begin{aligned}
h_p(a)&=\frac{(1+a)^p}{(1+a)^p+(1-a)^p}\ge 2^{-(p+1)}(1+a)^p,\\
1-h_p(a)&=\frac{(1-a)^p}{(1+a)^p+(1-a)^p}\ge  2^{-(p+1)}(1-a)^p. 
\end{aligned}
\end{equation}

We now consider cases according to different optimal action sets in \eqref{eqn:tu_A_V_breakdown}.
\begin{itemize}
\item If $\A_\theta^+(x)=\{1\}$, then $V_\theta^+(x)=x^m$ and from \eqref{eqn:Qtheta+_cases}
\[
V_\theta^+(x)-Q_\theta^+(x,a)
=
\begin{cases}
x^m(1-h_p(a)), & x>d_\theta^+,\\
(x^m-\theta)(1-h_p(a)), & x\le d_\theta^+,\ \theta<0.
\end{cases}
\]
In both situations, applying the lower bound in \eqref{eqn:hp_basic_bounds} yields
\[
V_\theta^+(x)-Q_\theta^+(x,a)\ge x^m(1-h_p(a))
\ge2^{-(p+1)}x^m(1-a)^p.
\]
Since $\dist(a,\{1\})=1-a$ for $a\in[-1,1]$, this yields \eqref{eqn:pgrowth_plus_new}.

\item If $\A_\theta^+(x)=\{-1\}$, then we observe from \eqref{eqn:tu_A_V_breakdown} that necessarily $\theta>0$ and $x\le d_\theta^+$, so $x^m\le \theta/2$.
Moreover, in this case $V_\theta^+(x)=\theta$. Thus,
\begin{align*}  
V_\theta^+(x)-Q_\theta^+(x,a)
&=
\theta-\crbk{x^m h_p(a)+\theta(1-h_p(a))}\\
&=(\theta-x^m)h_p(a)\\
&\geq \frac{\theta}{2}h_p(a)\\
&\geq 2^{-(p+1)}x^m(a+1)^p.
\end{align*}
Since $\dist(a,\{-1\})=a+1$ for $a\in[-1,1]$, this also implies \eqref{eqn:pgrowth_plus_new}.

\item If $\A_\theta^+(x)=\A$, which this only occurs when $(\theta,x)=(0,0)$ or $x=2$, then
$\dist(a,\A_\theta^+(x))=0$ and $V_\theta^+(x)-Q_\theta^+(x,a)=0$, so \eqref{eqn:pgrowth_plus_new} holds trivially.
\end{itemize}
As these cases summarize all possible $\A_\theta^+(x)$, we have shown that \eqref{eqn:pgrowth_plus_new} holds for all $x\in[0,1]$ and $a\in\A$, checking the $p$-growth condition for $\cM_\theta^+$.

\smallskip
\noindent\emph{The margin-mass condition.}
With $g_{p,\theta}^+$ defined in \eqref{eqn:gplus_new}, fix $t\ge 0$.
If $t<2$, then $2\notin\{x\in\X:g_{p,\theta}^+(x)\le t\}$, and thus
\begin{align*}
\lambda_\mu\crbk{\{x\in\X:g_{p,\theta}^+(x)\le t\}}
&=
\frac{1}{2}\mu\crbk{\set{x\in[0,1]:2^{-(p+1)}x^m\le t}}\\
&=
\frac{1}{2}\mu\crbk{\set{x\in[0,1]:x\le (2^{p+1}t)^{1/m}}}\\
&\leq2^{\frac{p+1}{m}-1}t^{1/m}. 
\end{align*}   

If $t\ge 2$, then $g_{p,\theta}^+(2)=2\le t$. Also $g_{p,\theta}^+(x)\le 2^{-(p+1)}\le t$ for all $x\in[0,1]$,
so $\{x:g_{p,\theta}^+(x)\le t\}=\X$. Therefore, for $t\ge 2$,
\[
\lambda_\mu\crbk{\{x\in\X:g_{p,\theta}^+(x)\le t\}} = 1\leq 2^{-1/m}t^{1/m}. 
\]

Therefore, combining the two cases shows that \eqref{eqn:margin_mass} holds for all $t\ge 0$ for $\cM_\theta^+$ with exponent $m$
and constant $M=\max\set{2^{\frac{p+1}{m}-1},2^{-1/m}}$.

\paragraph{Verifying Assumption \ref{assump:p_growth_margin_mass} for $\cM_\theta^-$.}
Recall $Q_\theta^-$ computed in \eqref{eqn:Qtheta-}. 

\smallskip
\noindent\emph{Optimal action sets and values.}
Since $Q_\theta^-(x,\cd)$ is independent of $x\in[0,1]$, the maximizing action is the same for all $x\in[0,1]$. As noted after \eqref{eqn:Qtheta-}, it is not hard to see that
\begin{equation}\label{eqn:tu_A_V_breakdown_minus}
\renewcommand{\arraystretch}{1.2}
\begin{array}{lll}
\A_\theta^-(x) \hspace{0.1in}& V_\theta^-(x)\hspace{0.1in} & \text{Condition} \\ \hline
\{d_\theta^-\}  & |\theta|^{p/(p-q)}    & x\in[0,1], \;\theta\geq 0 \\
\{-d_\theta^-\} & |\theta|^{p/(p-q)} &x\in[0,1], \; \theta<0\\
\A     & 0      & x=2 
\end{array}
\end{equation}
and, when $\theta = 0$, $a_\theta^-$ is interpreted as 0. 

\smallskip
\noindent\emph{The $p$-growth condition.}
We define $$c_p:= \inf_{\rho \ge 0}\frac{1+|1-\rho|^p}{(1+\rho)^p} .$$
It is not hard to see that $c_p > 0$. We show that for all $\theta\in[-1,1]$ and $(x,a)\in\Z$,
\begin{equation}\label{eqn:ts_pgrowth_minus}
V_\theta^-(x)-Q_\theta^-(x,a)\ge c_{p}\dist\crbk{a,\A_\theta^-(x)}^p.
\end{equation} 
Hence the $p$-growth condition \ref{eqn:Q_err_pgrowth_in_action} hold with $g_p\equiv c_p$. 

Note that, by symmetry, we only need treat the case $\theta \geq 0$. The case $\theta< 0$ is equivalent when we change the variable $a$ to $-a$. Therefore, we will assume $\theta\geq 0$ and, in particular, $\A_\theta^{-}(x) = d_\theta^-$ when $x\in[0,1]$. 

We proceed to separately bound the growth for $a \ge 0$ and $a < 0$. When $a\ge 0$, by \eqref{eqn:tu_A_V_breakdown_minus} and \eqref{eqn:Qtheta-}, we have 
$$
\begin{aligned}V_\theta^-(x) - Q_\theta^-(x,a) &= |a - d_\theta^-|^p + \theta^{p/(p-q)} - \theta(a^q\wedge \theta^{q/(p-q)})\\
&=|a - d_\theta^-|^p + \theta(\theta^{q/(p-q)} - a^q\wedge \theta^{q/(p-q)})\\
&= 
\begin{cases}
    (d_\theta^- - a)^p + \theta((d_\theta^-)^q - a^q), &0\leq a<d_\theta^-,\\
    (a-d_\theta^-)^p, & d_\theta^- \leq a\le 1,
\end{cases} 
\end{aligned}$$
for all $x\in[0,1]$. Thus, when $a\geq 0$
\begin{equation}\label{eqn:tu_v_Q_minus_growth_age0}
V_\theta^-(x) - Q_\theta^-(x,a) \geq |a-d_\theta^-|^p = \dist(a,\A_\theta^-(x))^p \geq c_p\dist(a,\A_\theta^-(x))^p  
\end{equation}
for all $x\in[0,1]$ and $a\ge 0$, where we used $c_p\leq 1$. 

On the other hand, if $a<0$, 
$$V_\theta^-(x) - Q_\theta^-(x,a) \geq |d_\theta^--|a| |^p + (d_\theta^-)^p 
$$
Note that $\A_\theta^- = \set{d_\theta^-}$ which is non-negative. So for $a < 0$ and $x\in[0,1]$, $\dist(a,\A_\theta^-(x))= |a| +  d_\theta^-$. Using this fact, we bound
\begin{equation}\label{eqn:tu_v_Q_minus_growth_ale0}
\begin{aligned}V_\theta^-(x) - Q_\theta^-(x,a) &\geq \frac{|d_\theta^- -|a| |^p + (d_\theta^-)^p}{(|a| +  d_\theta^-)^p}\dist(a,\A_\theta^-(x))^p\\
&= \frac{|1- \rho_\theta |^p + 1}{(\rho_\theta +  1)^p}\dist(a,\A_\theta^-(x))^p\\
&\geq c_p \dist(a,\A_\theta^-(x))^p
\end{aligned}
\end{equation}
for $x\in[0,1]$, where $\rho_\theta = |a|/d_\theta^-$. 

Finally, combining \eqref{eqn:tu_v_Q_minus_growth_age0}, \eqref{eqn:tu_v_Q_minus_growth_ale0}, and the trivial case $x = 2$, we conclude \eqref{eqn:ts_pgrowth_minus} for $\theta\ge 0$. For $\theta < 0$, as noted earlier,  \eqref{eqn:ts_pgrowth_minus} follows from the same argument under a change of variable $b = -a$.

\smallskip
\noindent\emph{The margin-mass condition.}
Note that for the $-$ case, $g_p\equiv c_p$, so the margin mass condition \eqref{eqn:margin_mass} in Assumption \ref{assump:p_growth_margin_mass} holds with any $m> 0$ and
$M = c_p^{-1/m}$. 

\paragraph{Concluding the proof.}
To conclude, we have verified Assumptions \ref{assump:regularity},  \ref{assump:concentrability}, and \ref{assump:p_growth_margin_mass} for both instances $\cM_\theta^+$ 
and $\cM_\theta^-$ for any $\theta\in[-1,1]$, $m > 0$, $p>q>0$. This completes the proof.
\end{proof}

\subsection{Policy Learning Algorithms and Minimax Regret Lower Bounds}
\begin{definition}[Adapted policy learning and Q estimation algorithms]
    A data generating process is a filtered probability space $\set{\Omega,\cF,\set{\cD_n:n\ge 1},P}.$ 
    
    An adapted policy learning algorithm is a sequence of measurable maps $$\set{\hat \pi_n: \hat\pi_n\in m\set{ \cX\times \cD_n\ra\cA},\; n\ge 1}.$$ We denote the class of all adapted policy learning algorithm at time $n$ by $\hat \pi_n\in \mrm{PLA}_n$ where the dependence on the data generating process is suppressed.
    
    Similarly, an adapted Q-estimation algorithm is a sequence of measurable maps $$\set{\widehat  Q_n: \widehat  Q_n\in m\set{ \cZ\times \cD_n\ra\cB(\R)},\; n\ge 1}.$$
\end{definition}

Before we state the main minimax lower bound theorem, we redefine the policy regret notation (consistent with the definition in Definition \ref{def:regret}) to avoid potential ambiguity. For measurable function $\pi:\X\ra\A$ and $\cM_\theta^\dagger$, $\dagger = +,-$, let
\begin{equation}\label{eqn:def_reg_hard_M}
\reg\crbk{\pi;\cM_\theta^\dagger}:= V_\theta^\dagger(\mu) - V_\theta^{\pi,\dagger}(\mu) = V_\theta^\dagger(\mu) - \int_\X Q_\theta^\dagger(x,\pi(x))\mu(dx)
\end{equation}
where $\mu$ is the common uniform initial distribution, $Q_\theta^\dagger$ is defined in \eqref{eqn:Qtheta+} or \eqref{eqn:Qtheta+}, $V_\theta^{\pi,\dagger}(\mu)$ is defined as in \eqref{eqn:def_Vpi} using the instance $\cM_\theta^\dagger$, and the corresponding $V_\theta^\dagger$ is computed in the proof of Proposition \ref{prop:instances_check_assumptions_134}. Here, we note that the second equality follows from the one-stage structure as noted in Remark \ref{rmk:static_case}. 

Therefore, for $\hat\pi_n\in \mrm{PLA}_n$, $\hat\pi_n(\omega,\cd):\X\ra\A$ is measurable. Hence, the regret in \eqref{eqn:def_reg_hard_M} is well defined for $\hat\pi_n\in \mrm{PLA}_n$. With this notation in place, we now state the main lower bound theorem.

\begin{theorem}
\label{thm:minimax_lb}
Fix any $\gamma\in(0,1)$, H\"older exponent $q\in(0,1]$, growth exponent $p>q$, and margin-mass exponent $m>0$. For any $\theta\in\Theta = [-1,1]$ and $\dagger = +$ or $-$, there exists a data generating process $\set{\Omega,\cF,\set{\cD_n:n\ge1},P_\theta^\dagger}$  supporting an adapted Q-estimation algorithms $\set{\widehat Q_n^\dagger:n\geq 1}$ that satisfies Assumption \ref{assump:envelop_moment_bounds} with $Q^*$ replaced by $Q^\dagger_\theta$ and $P$ replaced by $P_\theta^\dagger$. At the same time, the uniform regret achievable by any adapted policy learning algorithms over each of the two families is lower bounded by 
\begin{equation}
\label{eq:lb_margin_mass}
\inf_{\hat\pi_n\in \mrm{PLA_n}}\max_{\theta\in\Theta}
E_\theta^+\sqbk{\reg\crbk{\hat\pi_n;\cM_\theta^+}}
\geq 
 Cn^{-(m+1)/(2m)}
\end{equation}
and
\begin{equation}
\label{eq:lb_growth}
\inf_{\hat\pi_n\in \mrm{PLA_n}}\max_{\theta\in\Theta}
E_\theta^-\sqbk{\reg\crbk{\hat\pi_n;\cM_\theta^-}}
\geq 
 Cn^{-p/(2(p-q))}
\end{equation}
where the constant $C$ is independent of $n$. 
\end{theorem}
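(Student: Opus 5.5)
The argument has two parts: build a natural data-generating process and plug-in $Q$-estimator that satisfy Assumption~\ref{assump:envelop_moment_bounds}, and then prove the lower bounds by a two-point (Le Cam) argument between $\theta=+\theta_n$ and $\theta=-\theta_n$ with $\theta_n\asymp n^{-1/2}$.

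\emph{Data-generating process and estimator.} Under $P_\theta^\dagger$, let $\cD_n$ be generated by i.i.d.\ noise samples $W_1,\dots,W_n\sim\psi_\theta^\dagger$. The parameter is identifiable from $\bar Y_n=n^{-1}\sum Y_i$, since $E\bar Y_n=\theta$, and $D$ is deterministic. Set $\hat\theta_n:=\Pi_{[-1,1]}(\bar Y_n)$ and $\widehat Q_n^\dagger:=Q^\dagger_{\hat\theta_n}$, i.e.\ the plug-in of \eqref{eqn:Qtheta+} or \eqref{eqn:Qtheta-}.

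For the $-$ family, the estimator should be the empirical average of the rewards, $\widehat Q_n^-(x,a)=n^{-1}\sum_i r^-(x,a,W_i)$. Its error is $(\bar Y_n-\theta)\,\sgn(a)(|a|^q\wedge (d_\theta^-)^q)$. Since $a\mapsto\sgn(a)(|a|^q\wedge d^q)$ is $q$-H\"older with constant $2$ on $[-1,1]$, uniformly in $d\in[0,1]$, we may take $\delta_n=\Lambda_n(q)=2|\bar Y_n-\theta|+n^{-17}$. The moment bounds then follow from Hoeffding or Marcinkiewicz--Zygmund, which give $E|\bar Y_n-\theta|^k\le C_k n^{-k/2}$.

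For the $+$ family, the empirical average is $x^m h_p(a)+\bar Y_n\1\{x\le d_\theta^+\}(1-h_p(a))$, which uses the observed $D=d_\theta^+$. Its error is $(\bar Y_n-\theta)\1\{x\le d^+_\theta\}(1-h_p(a))$. Because $h_p$ is Lipschitz on $[-1,1]$ and $|a-b|\le 2^{1-q}|a-b|^q$, the error is $q$-H\"older with $\Lambda_n(q)\le C|\bar Y_n-\theta|$. Measurability and continuity are immediate. This settles the first claim for both families.

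\emph{Reduction to testing.} Fix $\theta_n:=c\,n^{-1/2}$ with $c\in(0,1/2]$ small. The two laws $P_{\pm\theta_n}^{\dagger,\otimes n}$ differ only through the law of $Y$ (the point mass $d$ is the same for $\pm\theta_n$, since it depends on $|\theta|$). Hence $\KL\le n\cdot C\theta_n^2\le Cc^2$, and by Pinsker or the Bretagnolle--Huber inequality the total variation distance is at most $1/2$.

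For any $\hat\pi_n$, define the separation functionals in each family. In the $+$ family, take $x\le d:=(\theta_n/2)^{1/m}$. The optimal action is $-1$ under $+\theta_n$ and $+1$ under $-\theta_n$. Adding the two regrets pointwise, \eqref{eqn:Qtheta+_cases} gives
\[
\ell_{+}(x,a)+\ell_{-}(x,a)=(\theta_n-x^m)h_p(a)+(x^m+\theta_n)(1-h_p(a))\ge \theta_n/2 .
\]
Integrating over $x\in[0,d]$ yields $\reg(\pi;\cM^+_{\theta_n})+\reg(\pi;\cM^+_{-\theta_n})\ge \theta_n d/2\asymp n^{-(m+1)/(2m)}$ for every deterministic policy $\pi$.

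In the $-$ family, with $d=\theta_n^{1/(p-q)}$, I claim that for every $a$,
\[
\ell_{+}(a)+\ell_-(a)\ge \theta_n d^q=\theta_n^{p/(p-q)} .
\]
To see this, consider $a\ge0$. Then $\ell_-(a)\ge V-Q_{-\theta_n}(a)$, and the $\theta$-term of $Q_{-\theta_n}$ contributes $-\theta_n(a^q\wedge d^q)$. Adding the two expressions computed in the proof of Proposition~\ref{prop:instances_check_assumptions_134}, the sum equals $2|a-d|^p+2\theta_n d^q$, up to checking the exact signs. Since $\theta_n^{p/(p-q)}\asymp n^{-p/(2(p-q))}$, this is the required separation. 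The case $a<0$ follows by symmetry.

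\emph{Conclusion.} With the separations in hand, the standard inequality
\[
\max_{\pm}E_{\pm\theta_n}\reg\ge \tfrac12\bigl(E_{+}\reg+E_{-}\reg\bigr)\ge \tfrac12\,s_n\bigl(1-\TV\bigr)
\]
applies. Here $s_n$ is the pointwise separation, and the middle step uses $\int\min(dP_+,dP_-)=1-\TV$ together with nonnegativity of each regret. This gives \eqref{eq:lb_margin_mass} and \eqref{eq:lb_growth}.

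The main obstacle is the pointwise separation inequality in the $-$ family, in particular checking that $\sgn(a)(|a|^q\wedge d^q)$ makes the two $\theta$-terms add to exactly $\theta_n d^q$ when $a$ lies between the two candidate maximizers. I would verify this case by case over $a\in[0,d)$, $a\ge d$, and $a<0$. The second delicate point is confirming that the Hölder envelope is uniform in $d_\theta$, so that the constants in Assumption~\ref{assump:envelop_moment_bounds} do not depend on $\theta$.
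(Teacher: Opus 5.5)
Your proposal is correct and follows the paper's proof: the same i.i.d.-noise data-generating process, and an estimator (the empirical average of rewards) that coincides a.s.\ with the paper's $\widehat Q_n^\dagger$ built from $\bar Y_n$ and $\bar D_n=d_\theta^\dagger$. It also uses envelopes proportional to $|\bar Y_n-\theta|$ and Le Cam's two-point method at $\pm c\,n^{-1/2}$ with the same KL/Pinsker bound. The only difference is presentational. You lower-bound the sum of the two pointwise losses and use $\int\min(dP_+,dP_-)=1-\TV$. The paper instead introduces the functionals $\widehat L_n$ and $\widehat S_n$ and uses $\int f\,d(\mu-\nu)\le\|f\|\,d_{\TV}(\mu,\nu)$. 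In the $-$ family your route is slightly cleaner. The $\theta$-terms cancel identically, giving $\ell_{\theta_n}(a)+\ell_{-\theta_n}(a)=2d^p+2\bigl||a|-d\bigr|^p\ge 2\theta_n d^q$ for every $a$, so the case analysis you flag as the main obstacle is not needed.

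Two small corrections. First, $h_p$ is not Lipschitz on $[-1,1]$ when $p<1$, which the statement allows (e.g.\ $q<p<1$): near $a=-1$ one has $h_p(a)\approx 2^{-p}(1+a)^p$. What is true is that $h_p$ is $\min\{p,1\}$-H\"older, because its denominator is at least $1$ and $(1\pm a)^p$ are $\min\{p,1\}$-H\"older. Since $p>q$, this still yields the bound $|h_p(a)-h_p(b)|\le \ell_{p,q}|a-b|^q$ you need.

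Second, drop the initial plug-in $Q^\dagger_{\hat\theta_n}$. Recomputing $d_{\hat\theta_n}$ from $\hat\theta_n$ makes the $+$ error equal to $\hat\theta_n(1-h_p(a))$ for $x$ between $d_\theta^+$ and $d_{\hat\theta_n}^+$. That error is of order $|\theta|$ rather than $|\bar Y_n-\theta|$, so Assumption~\ref{assump:envelop_moment_bounds} fails for fixed $\theta\neq 0$. The empirical-average estimator you then adopt keeps $D$ at its observed value $d_\theta$ and is the right one.
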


In addition, by Proposition \ref{prop:instances_check_assumptions_134}, the instances $\cM_\theta^\dagger,\ \dagger = +,-$ further satisfy Assumptions \ref{assump:regularity}, \ref{assump:concentrability}, and \ref{assump:p_growth_margin_mass}. Therefore, for MDPs satisfying Assumptions~\ref{assump:regularity}--\ref{assump:p_growth_margin_mass}, the expected policy regret is lower bounded in a minimax sense by
$$
\Omega\crbk{\max\set{n^{-p/(2(p-q))},\,n^{-(m+1)/(2m)}}}
=
\Omega\crbk{n^{-\min\set{\frac{p}{2(p-q)},\,\frac{m+1}{2m}}}},
$$
which matches the upper bound in \eqref{eqn:continuous_action_ub_summary}, except in the case $p=q(m+1)$, where there is a $\log n$ gap.

We note that, once the hard MDP families are carefully constructed, the lower-bound proof of Theorem~\ref{thm:minimax_lb}, although technical, follows standard reduction arguments from the statistics literature; see, for example, \citet{tsybakov2008nonparametric}. At the same time, the proof also relies on a data-generating process motivated by a data-driven stochastic optimal control framework. This framework, which we study in detail in the next section, concerns learning an optimal control policy for a system whose dynamics are known, but whose noise distribution is unknown and must be estimated from data. 

In view of these considerations, we defer the proof of Theorem~\ref{thm:minimax_lb} to Appendix \ref{section:proof:thm:minimax_lb}.

\section{Sufficient Conditions for Assumptions \ref{assump:regularity} and \ref{assump:envelop_moment_bounds}}\label{section:suff_cond}

From the highlight in Remark~\ref{rmk:q_eq_0}, we see that the H\"older exponent $q$ in Assumption~\ref{assump:envelop_moment_bounds} is crucial for fast convergence of policy regret. In this section, we consider a class of policy learning environments that is widely applicable in operations settings, and show that Assumptions~\ref{assump:regularity} and \ref{assump:envelop_moment_bounds} hold with $q>0$ under mild regularity conditions on the model primitives. We then present two important OR problems that naturally satisfy these sufficient conditions. The core verification mechanisms extend naturally to other dynamical systems arising in operations applications. This suggests the broad applicability of our results and, more importantly, explains why fast convergence commonly arises in these settings.

Specifically, we consider the optimal policy learning problem for a Markov stochastic dynamic system driven by an i.i.d. noise sequence $\set{W_t:t\geq 1}$. The system evolution follows the state recursion
\begin{equation}
\label{eqn:noise_model_recursion}
X^\pi_{t+1}=f(X^\pi_t,A_t^\pi,W_{t+1}),
\end{equation}
where the measurable transition function $f:\X\times\A\times\W\to\X$ is known, while the distribution of the sequence $\set{W_t}$ must be inferred from data. This setting is common in data-driven control problems in OR, including applications with demand, workload arrival, and service-rate uncertainty.

Specifically, we consider an i.i.d.\ data set consisting of $2n$ samples $\{W_i^V,W_i^Q:i=1,\ds,n\}$ with law $\psi$ defined on an underlying probability space $(\Omega,\cF,P)$. To simplify notation, let $W\eqd W_1^V$ also be defined on $(\Omega,\cF,P)$, independent of $\{W_i^V,W_i^Q:i=1,\ds,n\}$. For all measurable $\phi$ and $U=V,Q$, define $\cF_n^U = \sigma(W_1^U,\ds W_n^U),$
$$
\psi_n^U:=\frac{1}{n}\sum_{i=1}^n\delta_{W_i^U}
\quad\text{and}\quad
E_n^U\phi(W)=\int_\W \phi(w)\psi_n^U(dw).
$$
To signify adaptivity, a random function with a superscript $U = V$ or $Q$ is generally $\cF_n^U$ measurable (e.g. $E[\phi(G^V_n(\theta,w))|\cF_n^V] = \phi(G^V_n(\theta,w))$ for all bounded measurable $\phi$).

We then construct the estimator in two steps. First, we solve the Bellman equation for the value function:
\begin{equation}\label{eqn:def_hatVn}
\widehat V_n(x)=\sup_{a\in\A}E_n^V[r(x,a,W)+\gamma \widehat V_n(f(x,a,W))]
\end{equation}
for all $x\in\X$. Then, using the remaining samples, we define
\begin{equation}\label{eqn:def_hatQn}
\widehat Q_n(z):=E_n^Q[r(z,W)+\gamma \widehat V_n(f(z,W))].
\end{equation}

We note that the sample split is used to make the verification transparent. The $V$-sample constructs the empirical Bellman fixed point $\widehat V_n$, and the independent $Q$-sample evaluates the corresponding action-value estimator. Conditional on $\cF_n^V$, $\widehat V_n$ is fixed when averaging over the $Q$-sample, which separates the regularity of $\widehat V_n$ from the empirical process error that determines $\delta_n$ and $\Lambda_n(q)$. Under broad conditions, this two-layer estimator satisfies Assumption~\ref{assump:envelop_moment_bounds}.

\subsection{H\"older Regularity}

We first identify regularity conditions under which the Bellman operator preserves H\"older continuity. This verifies Assumption~\ref{assump:regularity} and helps to establish the regularity of $\widehat V_n$.

For a function $g:\Ss\to\R$, where $\Ss$ is either $\X$ or $\Z$, and $\alpha\in(0,1]$, define the $\alpha$-H\"older coefficient by
\[
[g]_{\alpha,\Ss}:=
\sup_{u\neq u'\in\Ss}\frac{|g(u)-g(u')|}{|u-u'|^\alpha}.
\]

\begin{assumption}\label{assump:v_q_holder}
Assume the following conditions hold.
\begin{enumerate}[label=\textup{(\roman*)},leftmargin=*]
    \item For all $z\in\Z$, $\abs{E[r(z,W)]}\le r_\vee.$
    
    \item For some $\alpha_r\in(0,1]$, $r$ is weakly $\alpha_r$-H\"older continuous in $z$; i.e. there is $\ell_r <\infty$ such that
    \begin{equation}\label{eqn:r_weak_holder_z}
        E\abs{r(z,W)-r(z',W)}
        \le
        \ell_r |z-z'|^{\alpha_r}
    \end{equation}
    for all $z,z'\in\Z$.
    
    \item $f$ is weakly $\ell_f$-Lipschitz continuous in $z$; i.e.
    \begin{equation}\label{eqn:f_weak_lip_z}
        E\abs{f(z,W)-f(z',W)}
        \le
        \ell_f |z-z'|
    \end{equation}
    for all $z,z'\in\Z$.
\end{enumerate}
\end{assumption}
We note that these weak continuity and integrability assumptions are rather mild for OR applications, where many dynamic models naturally have a max-plus-linear structure; see Section~\ref{section:verification-or-models}.
Under these regularity conditions, we show in the following Theorem \ref{thm:holder_reg_of_V_Q} that the resulting optimal value and $Q$-functions are H\"older continuous. The proof follows from a successive approximation argument and is deferred to Appendix \ref{section:proof:thm:holder_reg_of_V_Q}. 
\begin{theorem}[H\"older continuity]
\label{thm:holder_reg_of_V_Q}
Suppose Assumption \ref{assump:v_q_holder} holds. Let $\alpha\in(0,1]$ satisfy $\alpha\le \alpha_r$ and $\gamma \ell_f^\alpha<1$, and define
\[
\ell_\alpha
:=
\frac{\ell_r^{\alpha/\alpha_r}(2r_\vee)^{1-\alpha/\alpha_r}}{1-\gamma \ell_f^\alpha}.
\]
Then $V^*$ and $Q^*$ as defined in \eqref{eqn:def_Qstar} are bounded and $\alpha$-H\"older continuous with $[V^*]_{\alpha,\X}\le \ell_\alpha$ and $[Q^*]_{\alpha,\Z}\le \ell_\alpha.$
Moreover, \eqref{eqn:Vstar_max_Qstar} holds; i.e.
$V^*(x)=\max_{a\in\A}Q^*(x,a)$ for all $x\in\X$.
\end{theorem}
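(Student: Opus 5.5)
I would prove Theorem~\ref{thm:holder_reg_of_V_Q} by successive approximation (value iteration) on the space of bounded $\alpha$-H\"older functions on $\X$. Define the Bellman operator
\[
(\cT v)(x):=\sup_{a\in\A}\Big\{\bar r(x,a)+\gamma E\big[v(f(x,a,W))\big]\Big\}
\]
and the associated $Q$-map
\[
(\mathcal{K}v)(z):=\bar r(z)+\gamma E\,v(f(z,W)).
\]
Start from $v_0\equiv 0$ and iterate $v_{k+1}=\cT v_k$. The plan is to show three things: that $\norm{v_k}\le r_\vee/(1-\gamma)$; that $[v_k]_{\alpha,\X}\le \ell_\alpha$ uniformly in $k$; and that $v_k\to V^*$ uniformly. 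The last point is the standard contraction argument: $\cT$ is a $\gamma$-contraction in sup norm, and its fixed point coincides with the optimal value over $\Pi_{\mrm{HD}}$.

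\textbf{H\"older seminorm of the reward.} First I bound $[\bar r]_{\alpha,\Z}$ by interpolation. By \eqref{eqn:r_weak_holder_z},
\[
\abs{\bar r(z)-\bar r(z')}\le \ell_r\abs{z-z'}^{\alpha_r},
\]
and trivially $\abs{\bar r(z)-\bar r(z')}\le 2r_\vee$. Taking the geometric mean with weights $\alpha/\alpha_r$ and $1-\alpha/\alpha_r$ gives
\[
\abs{\bar r(z)-\bar r(z')}\le \ell_r^{\alpha/\alpha_r}(2r_\vee)^{1-\alpha/\alpha_r}\abs{z-z'}^{\alpha}.
\]
Call this constant $L_r$.

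\textbf{Propagation of the H\"older bound.} Next, if $[v]_{\alpha,\X}\le L$, then by Jensen's inequality (concavity of $t\mapsto t^\alpha$) and \eqref{eqn:f_weak_lip_z},
\[
\abs{E\,v(f(z,W))-E\,v(f(z',W))}\le L\,E\abs{f(z,W)-f(z',W)}^\alpha\le L\big(E\abs{f(z,W)-f(z',W)}\big)^\alpha\le L\ell_f^\alpha\abs{z-z'}^\alpha.
\]
Hence $[\mathcal{K}v]_{\alpha,\Z}\le L_r+\gamma\ell_f^\alpha L$. A supremum over $a$ of functions that are uniformly H\"older in $x$ is H\"older with the same constant, since $\abs{\sup_a F(x,a)-\sup_a F(x',a)}\le\sup_a\abs{F(x,a)-F(x',a)}$. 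Therefore $[\cT v]_{\alpha,\X}\le L_r+\gamma\ell_f^\alpha L$. Because $\gamma\ell_f^\alpha<1$, induction from $L=0$ keeps every iterate below the fixed point of $L\mapsto L_r+\gamma\ell_f^\alpha L$, namely $L_r/(1-\gamma\ell_f^\alpha)=\ell_\alpha$. The uniform limit $V^*$ inherits $[V^*]_{\alpha,\X}\le\ell_\alpha$, and applying the same computation once more to $Q^*=\mathcal{K}V^*$ gives $[Q^*]_{\alpha,\Z}\le\ell_\alpha$. Boundedness follows since $\norm{Q^*}\le r_\vee+\gamma\norm{V^*}$.

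\textbf{Identification of $V^*$ and the max formula (main obstacle).} I expect the main obstacle to be showing that the fixed point $v^\star$ of $\cT$ equals $V^*(x)=\sup_{\pi\in\Pi_{\mrm{HD}}}V^\pi(x)$, together with measurability. Continuity handles measurability: each $v_k$ is continuous, so $\cT v_k$ is well defined and Borel. The function $(x,a)\mapsto \mathcal{K}v^\star(x,a)$ is continuous and $\A$ is compact, so the supremum is attained and a measurable maximizer exists by the measurable maximum theorem. With that greedy stationary policy $\pi$, iterating the fixed-point equation gives $V^\pi=v^\star$, hence $v^\star\le V^*$. For the reverse inequality I would take any history-dependent $\pi$ and run the usual telescoping/supermartingale argument: $v^\star(X_t)\ge \bar r(X_t,A_t)+\gamma E[v^\star(X_{t+1})\mid\mathcal F_t]$, and summing with discounting shows $V^\pi\le v^\star$. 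This proves $V^*=v^\star$. Then \eqref{eqn:Vstar_max_Qstar} is exactly the fixed-point equation $V^*=\cT V^*=\max_a Q^*(\cdot,a)$, where the maximum is attained by continuity of $Q^*(x,\cdot)$ and compactness of $\A$.
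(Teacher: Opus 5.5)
Your proposal is correct and follows essentially the paper's proof. Both run Picard iteration from $v_0\equiv 0$, use the interpolation $s\wedge t\le s^{\theta}t^{1-\theta}$ for the reward, Jensen plus weak Lipschitzness of $f$ for the continuation term, and the sup-difference inequality, then argue by induction that $[v_k]_{\alpha,\X}\le\ell_\alpha$. The only difference is in identifying the fixed point of $\cT$ with $V^*$: the paper verifies weak continuity of the induced kernel and cites Hern\'andez-Lerma for the contraction on $C_b(\X)$ and that identification. You instead get continuity of the iterates from the H\"older bound itself and prove the identification directly, using a measurable greedy selector and the supermartingale inequality over $\Pi_{\mrm{HD}}$, which is a valid and more self-contained substitute.
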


Applying Theorem~\ref{thm:holder_reg_of_V_Q} to empirical measures supported on finitely many data points yields the following corollary.
\begin{corollary}
\label{cor:Q_Qn_regular}
Fix $n\ge 1$ and $\omega\in\Omega$. Suppose Assumption \ref{assump:v_q_holder} holds with the law of $W$ taken as $\psi$, $\psi_n^V(\omega,\cd)$, and $\psi_n^Q(\omega,\cd).$ Then the assertions in Assumption \ref{assump:regularity} hold.
\end{corollary}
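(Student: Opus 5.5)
Corollary~\ref{cor:Q_Qn_regular} should follow by applying Theorem~\ref{thm:holder_reg_of_V_Q} three times, once for each noise law $\psi$, $\psi_n^V(\omega,\cd)$ and $\psi_n^Q(\omega,\cd)$. We then check the items of Assumption~\ref{assump:regularity} one by one. Since $\omega$ and $n$ are fixed, $\psi_n^V(\omega,\cd)$ and $\psi_n^Q(\omega,\cd)$ are deterministic finitely supported probability measures on $\W$. By hypothesis, Assumption~\ref{assump:v_q_holder} holds under each of them, so nothing probabilistic is involved.

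\emph{Step 1 (true model).} With law $\psi$, Theorem~\ref{thm:holder_reg_of_V_Q} gives the following: $V^*$ and $Q^*$ are bounded and $\alpha$-H\"older, hence continuous and Borel measurable, and \eqref{eqn:Vstar_max_Qstar} holds. Boundedness of $\bar r$ is exactly Assumption~\ref{assump:v_q_holder}(i). Measurability of $\bar r$ follows from Fubini applied to the measurable $r$. Alternatively, it follows from continuity, since (ii) gives $|\bar r(z)-\bar r(z')|\le \ell_r|z-z'|^{\alpha_r}$. Continuity of $a\mapsto Q^*(x,a)$ follows from the H\"older bound on $\Z$.

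\emph{Step 2 (empirical value function).} Apply Theorem~\ref{thm:holder_reg_of_V_Q} with law $\psi_n^V(\omega,\cd)$. The optimal value function $V^*_{\psi_n^V}$ of the model with empirical noise is bounded, $\alpha$-H\"older, and satisfies the max-Bellman equation. This says precisely that it solves the fixed-point equation \eqref{eqn:def_hatVn}. Because the empirical Bellman operator is a $\gamma$-contraction in sup norm on bounded functions, this fixed point is unique. Hence $\widehat V_n(\omega,\cd)=V^*_{\psi_n^V}$ is well defined, bounded, continuous and measurable.

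\emph{Step 3 (empirical $Q$).} For fixed $\omega$, $\widehat Q_n(\omega,z)=\frac1n\sum_i[r(z,W_i^Q)+\gamma\widehat V_n(f(z,W_i^Q))]$ is a finite sum of measurable functions of $z$, hence $(\cX\times\cA)$-measurable. For continuity in $a$, I bound the difference directly:
\[
|\widehat Q_n(z)-\widehat Q_n(z')|\le E_n^Q|r(z,W)-r(z',W)|+\gamma[\widehat V_n]_{\alpha,\X}E_n^Q|f(z,W)-f(z',W)|^\alpha .
\]
By Assumption~\ref{assump:v_q_holder}(ii)--(iii) under $\psi_n^Q$ and Jensen's inequality ($\alpha\le1$), the right side is at most $\ell_r|z-z'|^{\alpha_r}+\gamma\ell_\alpha\ell_f^\alpha|z-z'|^\alpha$, which tends to $0$ as $z'\to z$. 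Here $\ell_\alpha$ is the constant from Step 2, computed with $\psi_n^V$.

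The main subtlety is this mixing of the two empirical laws. $\widehat Q_n$ is not itself the optimal $Q$-function of a single model, so Theorem~\ref{thm:holder_reg_of_V_Q} cannot be invoked for it directly. That is why Step 3 uses the hand computation above instead of the theorem. One also needs to confirm that the same $\alpha$ satisfies $\gamma\ell_f^\alpha<1$ for the $\psi_n^V$ constants; I take this as part of the hypothesis. Finally, since the argument holds for every $\omega$, the properties hold everywhere, consistent with Remark~\ref{rmk:measurable_selection_everywhere}.
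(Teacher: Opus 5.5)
Your proposal is correct and follows essentially the paper's route. You apply Theorem~\ref{thm:holder_reg_of_V_Q} under $\psi$ and under $\psi_n^V(\omega,\cdot)$, then treat $\widehat Q_n$ by a direct argument: you use the H\"older modulus of $\widehat V_n$ with Jensen, whereas the paper uses continuity of $\widehat V_n$ plus bounded convergence along $z_k\to z$. The extra hypothesis you impose on $\alpha$ is unnecessary: for any $\ell_f\ge 0$ one has $\gamma\ell_f^\alpha\to\gamma<1$ as $\alpha\downarrow 0$ (or $\ell_f^\alpha=0$ if $\ell_f=0$), so an admissible $\alpha\in(0,\alpha_r]$ exists for each law separately, and it need not be common across laws.
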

\begin{proof}[Proof of Corollary \ref{cor:Q_Qn_regular}]
Applying Theorem \ref{thm:holder_reg_of_V_Q} with the population measure $\psi$ yields that $V^*\in C_b(\X)$, $Q^*\in C_b(\Z)$, and $V^*(x)=\max_{a\in\A}Q^*(x,a)$ for all $x\in\X$. 

Next, applying Theorem \ref{thm:holder_reg_of_V_Q} with $\psi$ replaced by $\psi_n^V(\omega,\cd)$ shows that $\widehat V_n(\omega,\cd)\in C_b(\X)$, since the Bellman equation in that theorem is precisely \eqref{eqn:def_hatVn}.

It remains to verify the corresponding statement in Assumption \ref{assump:regularity} for $\widehat Q_n(\omega,\cd)$. 

Let $z_k\to z$ in $\Z$. Since Assumption \ref{assump:v_q_holder} holds with reference measure $\psi_n^Q(\omega,\cd)$, part~(ii) implies that the map $z\to E_n^Q[r(z,W)]$ is continuous on $\Z$. Since $\widehat V_n(\omega,\cd)\in C_b(\X)$, bounded convergence then implies that
\[
E_n^Q[\widehat V_n(\omega,f(z_k,W))]\to E_n^Q[\widehat V_n(\omega,f(z,W))].
\]
Therefore, by \eqref{eqn:def_hatQn}, $\widehat Q_n(\omega,\cd)$ is continuous on $\Z$. 

Moreover, Assumption \ref{assump:v_q_holder} (i) with reference measure $\psi_n^Q(\omega,\cd)$ implies that $\sup_{z\in\Z}\abs{E_n^Q[r(z,W)]}<\infty$. This, together with the boundedness of $\widehat V_n(\omega,\cd)$, the continuity of $\widehat Q_n$, and the definition in \eqref{eqn:def_hatQn}, implies that $\widehat Q_n(\omega,\cd)\in C_b(\Z)$. 

This proves all the assertions in Assumption \ref{assump:regularity} regarding $V^*$, $Q^*$, and $\widehat Q_n(\omega,\cd)$.
\end{proof}

\subsection{Sufficient Conditions}

In this section, we establish sufficient conditions on the model primitives for Assumptions~\ref{assump:regularity} and \ref{assump:envelop_moment_bounds}. Leveraging the regularity properties in the previous section, we turn to stochastic error bounds that serve as envelopes for the errors in Assumption~\ref{assump:envelop_moment_bounds}. Before proving the main Theorem~\ref{thm:split_sample_est}, we explain how H\"older regularity yields envelopes satisfying the moments conditions in Assumption~\ref{assump:envelop_moment_bounds}. The key tool is the following empirical process bound for H\"older parametric classes. To streamline the discussion, we defer the proof to Appendix~\ref{section:proof:prop:empirical_holder_class}.

\begin{proposition}
\label{prop:empirical_holder_class}
Let $\Theta\subset\R^d$ be compact and let $g:\Theta\times\W\to\R$ be measurable. Assume that there exist measurable functions $R,L:\W\to\R_+$ and $\eta\in(0,1]$ such that
\[
|g(\theta,w)|\le R(w)
\quad\text{and}\quad
|g(\theta,w)-g(\theta',w)|\le L(w)|\theta-\theta'|^\eta
\]
for all $\theta,\theta'\in\Theta$ and $w\in\W$. In addition, assume that for every integer $k\ge2$, $E\sqbk{R(W)^k+L(W)^k}<\infty.$
Then, for every integer $k\ge2$, the empirical process error
$Z_n:=\sup_{\theta\in\Theta}\abs{(E_n-E)g(\theta,W)}$
satisfies
\[
E\sqbk{Z_n^k}\le \frac{C_{\mrm{emp},k}}{n^{k/2}}
\]
for some finite constant $C_{\mrm{emp},k}$ depending only on
$\crbk{\Theta,\eta,k,E[R(W)^k],E[L(W)^k]}$.
\end{proposition}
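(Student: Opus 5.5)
The plan is a chaining argument. After symmetrization we obtain a sub-Gaussian process indexed by $\Theta$, and Dudley's entropy integral, taken in $L^k$ rather than in expectation, controls it. The key quantities are the empirical envelope $\bar R_n := (E_n R(W)^2)^{1/2}$ and the empirical Hölder constant $\bar L_n := (E_n L(W)^2)^{1/2}$. We can never divide by these, so every bound is kept multiplicative in them.

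\emph{Step 1 (symmetrization).} Let $\epsilon_1,\dots,\epsilon_n$ be i.i.d.\ Rademacher signs. By the standard symmetrization inequality for convex nondecreasing functions of the supremum, applied to $x\mapsto x^k$,
\[
E[Z_n^k]\le 2^k E\Big[\sup_{\theta\in\Theta}\Big|\tfrac1n\textstyle\sum_{i}\epsilon_i g(\theta,W_i)\Big|^k\Big].
\]
Measurability of the supremum needs a short argument. If $g(\cdot,w)$ is Hölder, the supremum equals the supremum over a countable dense subset of $\Theta$. Since $L(w)<\infty$ a.s., this is valid almost surely, and we may restrict to such a set.

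\emph{Step 2 (conditional chaining).} Condition on $W_1,\dots,W_n$. The Rademacher process $S(\theta)=n^{-1/2}\sum_i\epsilon_i g(\theta,W_i)$ is sub-Gaussian with respect to the empirical $L^2$ metric, and this metric satisfies
\[
d_n(\theta,\theta') \le \bar L_n|\theta-\theta'|^\eta .
\]
The $d_n$-diameter of the class is at most $2\bar R_n$. Covering $\Theta\subset\R^d$ by Euclidean balls gives
\[
N(\epsilon,d_n)\le \big(C_\Theta \bar L_n/\epsilon\big)^{d/\eta}\vee 1 .
\]
The $L^k$ version of Dudley's inequality, where sub-Gaussian moments give the factor $\sqrt k$, then yields
\[
\Big(E_\epsilon\sup_\theta|S(\theta)|^k\Big)^{1/k}\le C\sqrt{k}\Big(\bar R_n+\int_0^{2\bar R_n}\sqrt{\log N(\epsilon,d_n)}\,d\epsilon\Big).
\]
We add a single point $\theta_0$ to handle the supremum of $|S|$ rather than of increments. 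Computing the integral with the substitution $\epsilon=\bar L_n u$ bounds it by $C(d,\eta,\Theta)(\bar R_n+\bar L_n)$. This is the main obstacle: we must keep the bound linear in $(\bar R_n,\bar L_n)$ without dividing by either. Using $\sqrt{\log(x\vee1)}\le \sqrt{x}$ gives the bound
\[
\int_0^{2\bar R_n}\sqrt{\log N}\,d\epsilon \le C\int_0^{C\bar L_n}\sqrt{(C\bar L_n/\epsilon)^{d/\eta}}\,d\epsilon .
\]
This integrand diverges at $0$ when $d/\eta\ge2$, so that crude bound fails. Instead, use $\sqrt{\log(x)}\le c_s x^{s}$ with small $s<1$ (e.g.\ $s=1/2$ after raising to the power $\eta/(2d)$), i.e.\ $\sqrt{(d/\eta)\log(C\bar L_n/\epsilon)}$ is integrable on $[0,C\bar L_n]$ with integral proportional to $\bar L_n$. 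The integrand vanishes for $\epsilon>C\bar L_n$, so the total is $\le C'\bar L_n$ without any division.

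\emph{Step 3 (unconditioning).} Taking the $k$-th power and then expectation over the data gives
\[
E[Z_n^k]\le C_k n^{-k/2}\,E\big[(\bar R_n+\bar L_n)^k\big]\le C_k' n^{-k/2}\big(E[\bar R_n^k]+E[\bar L_n^k]\big).
\]
By Jensen, applied to the empirical average with $k\ge2$,
\[
\bar R_n^k=(E_nR^2)^{k/2}\le E_n R^k ,
\]
so $E[\bar R_n^k]\le E[R(W)^k]$, and similarly $E[\bar L_n^k]\le E[L(W)^k]$. This yields the claim with $C_{\mrm{emp},k}$ depending only on $(\Theta,\eta,k,E[R^k],E[L^k])$, where the dimension $d$ is subsumed in $\Theta$.
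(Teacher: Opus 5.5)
Your proposal is correct and follows essentially the same route as the paper: symmetrization, then conditional sub-Gaussian chaining for the Rademacher process under the empirical metric $d_n(\theta,\theta')\le \bar L_n|\theta-\theta'|^\eta$ with a single anchor point controlled by $\bar R_n$, then Jensen's inequality $\bar R_n^k\le E_n R(W)^k$ to uncondition. The one real difference is how you reach the $k$-th moment: you use an $L^k$ (tail) form of Dudley's bound directly, whereas the paper bounds the conditional first moment with the expectation form of Dudley and lifts it to $L^k$ via the Khinchin--Kahane inequality for Rademacher processes (Gin\'e and Nickl, Prop.~3.2.8); you also compute the entropy integral explicitly from the Euclidean covering bound, rather than only noting that $C_{\Theta,\eta}=\int_0^{D_\eta}\sqrt{\log N(v,\Theta,d_\eta)}\,dv$ is finite.
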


Therefore, if $V^*$ and $\widehat V_n$ are H\"older continuous, then, conditional on $\cF_n^V$, the estimation error $\|\widehat Q_n-Q^*\|$ can be viewed as the empirical process error over a H\"older parametric class indexed by $z\in\Z$. The action-wise regularity follows from a similar but more involved argument applied to the corresponding ratio process. Motivated by this intuition, the main sufficient conditions we use are summarized in Assumption~\ref{assump:suff_cond_A1A2}.

\begin{assumption}\label{assump:suff_cond_A1A2}
Assume that the following properties hold
\begin{enumerate}[label=\textup{(\roman*)},leftmargin=*]
    \item $\Z = \X\times \A\subseteq\R^{d_\X + d_\A}$ is compact. 
    \item There exists $\ell_{P} \geq 0$ and $\bar q\in[0,1]$, such that for all $x\in \X$ and $a,b\in \A$, $$d_{\mrm{TV}}(P(\cd|x,a),P(\cd|x,b))\le \ell_{P}|a-b|^{\bar q}.$$
    \item For some $z_0\in\Z$, $E[|r(z_0,W)|^k] < \infty$ for all integers $k\geq 2$. 
    \item There exists $\alpha_r\in(0,1]$ and a non-negative measurable function $L_r:\W\ra\R_+$ with $E[L_r(W)^k] < \infty$ for all integers $k\geq 2$, such that $|r(z,w) - r(z',w)| \leq L_r(w)|z-z'|^{\alpha_r}$ for all $z,z'\in\Z$.

    \item There exists $\ell_f\geq 0$ such that $|f(z,w) - f(z',w)| \leq \ell_f|z-z'|$ for all $z,z'\in\Z$.
\end{enumerate}
\end{assumption}

Under these conditions on the model primitives, the abstract requirements on $\widehat Q_n$ and $Q^*$ used in the regret analysis can be verified directly from $f$, $r$, and $\psi$. This is shown in the following theorem, whose proof is deferred to Appendix \ref{section:proof:thm:split_sample_est}. 

\begin{theorem}
\label{thm:split_sample_est}
Suppose Assumption \ref{assump:suff_cond_A1A2} is in force. 
Then, for every $\alpha\in(0,\alpha_r]$ such that $\gamma\ell_f^\alpha<1$, both $Q^*$ and $\widehat Q_n(\omega,\cd)$ are $\alpha$-H\"older continuous on $\Z$. Consequently, Assumption \ref{assump:regularity} holds. Moreover, define $\alpha^*:=\sup\{\alpha\in[0,\alpha_r]:\gamma\ell_f^\alpha<1\}$, then Assumption \ref{assump:envelop_moment_bounds} holds for every exponent $ q\in[0,\bar q]$ and $q < \alpha^*$.
\end{theorem}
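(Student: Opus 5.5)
The plan has two parts. First I verify regularity by transferring Assumption~\ref{assump:suff_cond_A1A2} into Assumption~\ref{assump:v_q_holder} for each of the three measures $\psi$, $\psi_n^V(\omega,\cd)$ and $\psi_n^Q(\omega,\cd)$; then I construct the two envelopes via Proposition~\ref{prop:empirical_holder_class} applied conditionally on $\cF_n^V$.

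\textbf{Regularity.} Under each of the three measures, the pathwise bounds (iv)--(v) give weak H\"older/Lipschitz continuity with constants $E_\nu L_r$ and $\ell_f$. Boundedness of $E_\nu r(z,W)$ follows from $|r(z,w)|\le |r(z_0,w)|+L_r(w)\diam(\Z)^{\alpha_r}$, using compactness of $\Z$. For the empirical measures these constants are finite for each $\omega$, since they are finite averages. Theorem~\ref{thm:holder_reg_of_V_Q} and Corollary~\ref{cor:Q_Qn_regular} then give $\alpha$-H\"older continuity of $Q^*$, $V^*$ and $\widehat V_n$, which yields Assumption~\ref{assump:regularity}. The same argument gives $\alpha$-H\"older continuity of $\widehat Q_n$, because $\widehat Q_n$ is an average of $r+\gamma\widehat V_n\circ f$ and the class is closed under such averages. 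The one catch is that the H\"older constant $\ell_\alpha^V$ of $\widehat V_n$ is random. I keep it explicit as $(E_n^V L_r)^{\alpha/\alpha_r}(2\bar r_n)^{1-\alpha/\alpha_r}/(1-\gamma\ell_f^\alpha)$, where every factor is an empirical mean of functions with all moments finite.

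\textbf{Envelope $\delta_n$.} I decompose the error as
\[
\widehat Q_n-Q^*=(E_n^Q-E)[r+\gamma\widehat V_n\circ f]+\gamma E[(\widehat V_n-V^*)\circ f].
\]
Conditional on $\cF_n^V$, the first term is an empirical process indexed by $z\in\Z$ with envelope $R(w)=|r(z_0,w)|+L_r(w)D+\|\widehat V_n\|$ and H\"older modulus $L(w)=L_r(w)+\gamma\ell_\alpha^V\ell_f^\alpha$. Proposition~\ref{prop:empirical_holder_class} bounds its conditional $k$-th moment by $n^{-k/2}$ times polynomial moments of $\ell_\alpha^V$ and $\|\widehat V_n\|$, and these are finite by convexity and Jensen. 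For the second term I use the Bellman contraction: $\|\widehat V_n-V^*\|\le(1-\gamma)^{-1}\sup_z|(E_n^V-E)[r+\gamma V^*\circ f](z)|$, which is again an empirical process over a H\"older class with deterministic constants. I let $\delta_n$ be the sum of these two suprema; it is measurable because, by continuity, the suprema can be taken over a countable dense set.

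\textbf{Envelope $\Lambda_n(q)$.} This is the main obstacle. The second term above, $\gamma\int(\widehat V_n-V^*)(x')[P(dx'|x,a)-P(dx'|x,b)]$, is bounded by $2\gamma\|\widehat V_n-V^*\|\ell_P|a-b|^{\bar q}$ by (ii), so it contributes a constant times $\|\widehat V_n-V^*\|$ for any $q\le\bar q$. The first term needs a bound on the ratio process
\[
\sup_{x,\,a\ne b}\frac{|(E_n^Q-E)h(x,a,W)-(E_n^Q-E)h(x,b,W)|}{|a-b|^q},\qquad h=r+\gamma\widehat V_n\circ f .
\]
I handle it with a chaining argument over the compact index set $\{(x,a,b)\}$. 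Write $\Delta_{x,a,b}(w)=h(x,a,w)-h(x,b,w)$. Then $|\Delta|\le L(w)|a-b|^\alpha$, so the normalized increment $\Delta/|a-b|^q$ has envelope $L(w)|a-b|^{\alpha-q}$, which tends to $0$ on the diagonal because $q<\alpha$. I would peel dyadic shells $|a-b|\in(2^{-j-1},2^{-j}]$. On each shell, Proposition~\ref{prop:empirical_holder_class} applied to the class $\{\Delta_{x,a,b}/|a-b|^q\}$ gives moments of order $n^{-k/2}2^{-j(\alpha-q)\kappa}$ for some $\kappa>0$. Its H\"older modulus on the shell is at most $2^{jq}$ times a Lipschitz-type constant, and this blows up only polynomially in $2^j$. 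Summing over $j$ converges provided $q<\alpha$, and this is where $q<\alpha^*$ is used. If the polynomial growth spoils the sum, I would instead rescale $b=a+2^{-j}u$ with $u$ in a fixed compact annulus, so that each shell becomes a fixed-index class with modulus $O(2^{-j\alpha})\cdot 2^{jq}$. Finally, conditional moments of $\Lambda_n(q)$ are integrated over $\cF_n^V$ by H\"older's inequality, and Remark~\ref{rmk:positive_delta_Lambda_and_everywhere_bound} makes both envelopes strictly positive.
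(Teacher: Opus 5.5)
Your regularity argument, the $\delta_n$ envelope, and the transition (TV) term in $\Lambda_n(q)$ are fine. For $\delta_n$ you put $\widehat V_n$ inside the empirical process and condition on $\cF_n^V$. The paper instead uses the deterministic class $r+\gamma V^*\circ f$ and absorbs $\widehat V_n-V^*$ into $\gamma\norm{\widehat V_n-V^*}\le\gamma Z_n^V/(1-\gamma)$; both work. One small correction: the $\alpha$-H\"older modulus of $r(\cdot,w)$ is $L_r(w)\diam(\Z)^{\alpha_r-\alpha}$, not $L_r(w)$.

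\textbf{The gap is in the ratio process.} You rightly flag this step as the crux, but the argument you give does not close it. Proposition~\ref{prop:empirical_holder_class} depends on the modulus through $E[L^k]$. As its proof shows, the bound is additive in envelope and modulus, because the Dudley integral runs up to $L_{n,2}D_\eta$. So a small envelope cannot compensate a large modulus. On the shell $|a-b|\in(2^{-j-1},2^{-j}]$:
\begin{itemize}
\item the envelope of $\Delta_{x,a,b}/|a-b|^q$ is $O(2^{-j(\alpha-q)})$;
\item by your own estimate, its $\alpha$-H\"older modulus is of order $2^{jq}$.
\end{itemize}
The proposition therefore gives a shell bound of order $n^{-k/2}2^{jqk}$, which grows in $j$, and the sum diverges for $q>0$. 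The claimed per-shell order $n^{-k/2}2^{-j(\alpha-q)\kappa}$ does not follow.

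The rescaling fallback has the same defect. With $b=a+2^{-j}u$, the map $(x,a)\mapsto h(x,a,w)-h(x,a+2^{-j}u,w)$ has $\alpha$-modulus only $2L(w)$, not $O(2^{-j\alpha})$, since $h$ is no more regular than $\alpha$-H\"older. Only the $u$-direction gains the factor $2^{-j\alpha}$.

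\textbf{The missing idea is to trade H\"older exponent for the vanishing envelope.} Extend $H(\theta,w):=\Delta_{x,a,b}(w)/|a-b|^q$ by $0$ on the diagonal, and let $B(w)$ be the pathwise $\alpha$-modulus of $h$. For $\theta=(x,a,b)$ and $\theta'=(x',a',b')$, set $d=|\theta-\theta'|$, $s=|a-b|\ge t=|a'-b'|$, and $\lambda=t/s$. Then:
\begin{itemize}
\item $s^{-q}|\Delta_\theta-\Delta_{\theta'}|\le 2B(w)\min\{d^\alpha s^{-q},\,s^{\alpha-q}\}\le 2B(w)d^{\alpha-q}$;
\item $|\Delta_{\theta'}|\,(t^{-q}-s^{-q})\le B(w)s^{\alpha-q}(\lambda^{\alpha-q}-\lambda^{\alpha})\le C B(w)(s-t)^{\alpha-q}\le C' B(w)d^{\alpha-q}$, using $s-t\le 2d$;
\item the case $t=0$ follows the same way from $s\le 2d$.
\end{itemize}
Hence $H(\cdot,w)$ is $(\alpha-q)$-H\"older on the compact set $\X\times\A\times\A$ with modulus $C_H B(w)$; this is the paper's Lemma~\ref{lemma:H_holder_bd}. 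A single application of Proposition~\ref{prop:empirical_holder_class}, conditional on $\cF_n^V$, then gives $E[Y_n^k\mid\cF_n^V]\le C_k n^{-k/2}\bigl(1+(\ell_\alpha^V)^k\bigr)$, with no peeling. This is exactly where you need some $\alpha\in(q,\alpha_r]$ with $\gamma\ell_f^\alpha<1$, i.e.\ $q<\alpha^*$.

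If you want to keep the shells, two fixes restore geometric decay in $j$:
\begin{itemize}
\item the same interpolation per shell, $\min\{2\sup|H|,\,L_j d^\alpha\}\le (2\sup|H|)^{1-\vartheta}L_j^{\vartheta}d^{\alpha\vartheta}$ with $\vartheta$ small;
\item a localized Dudley bound with the entropy integral truncated at the shell's $d_n$-diameter.
\end{itemize}
Once you use either, the shells become superfluous.
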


 In particular, as $\alpha^* > 0$, Theorem \ref{thm:split_sample_est} implies that if (ii) holds with $\bar q > 0$, then the split-sample estimator satisfies Assumptions \ref{assump:envelop_moment_bounds} with some $ q>0$. This implies a fast convergence rate for the regret of the greedy policy.

\subsection{Total Variation Regularity in Operations Examples}
\label{section:verification-or-models}

In Assumption~\ref{assump:suff_cond_A1A2}, the smoothness and integrability conditions on the reward and dynamics are usually easy to verify. The nontrivial requirement is the total-variation regularity condition in the action, namely part (ii). This is also the key factor determining whether a fast convergence rate is possible from Theorem \ref{thm:split_sample_est}. We show that this condition holds broadly with $\bar q=1$ in OR settings through two continuous-state, continuous-action examples: a multi-item lost-sales inventory control model and a dynamic service allocation problem.

We impose the following regularity condition on the exogenous noise, which may represent demand in the inventory problem or arriving workload in the service allocation problem. Assume that the noise distribution admits a density $p$ on $\R^{d_\W}$ such that for some $\bar q\in(0,1]$ and $\ell<\infty$,
\begin{equation}\label{eqn:cond_on_density}
\norm{p(\cdot+h)-p}_{L^1(\R^{d_\W})}
\le
\ell |h|^{\bar q}
\end{equation}
for all $h\in\R^{d_\W}$. We note that this assumption permits arbitrary dependence across demand coordinates. In particular, it holds with $\bar q=1$ when $p\in BV(\R^{d_\W})$ has bounded variation; i.e., there exists a vector of finite measures $\nu = (\nu_1,\ds,\nu_m)$ such that $\int(\nabla\cd\phi) d p_D = -\int \phi^\top d\nu_D$ for all $\phi\in C_c^1$ that is continuously differentiable with compact support. 

Indeed, we consider a $C_c^\infty$ mollifier $\rho_\epsilon$ that integrates to 1, supported on the ball with radius $\epsilon$ centered at 0 (e.g., the bump function). Let $p_\epsilon := p*\rho_\epsilon$ be the smoothed density. Then, differentiate w.r.t. $x_i$, $$\del_{i}p_\epsilon(x) = \int \del_i \rho_\epsilon(x-y)p(dy) = \int \rho_\epsilon(x-y)\nu_i(dy) = \nu_i*\rho_\epsilon$$
By the fundamental theorem of calculus, the smoothed density satisfies the estimate
$$\begin{aligned}\norm{p_\epsilon(\cd+h) - p_\epsilon}_{L^1(\R^{d_\W})} &= \int \abs{\int_0^1 h^\top\nabla p_\epsilon(x +th)}  dx \\
&\leq |h| \abs{\int \nu*\rho_\epsilon(x) dx}\\
&= |h|\abs{\iint \rho_\epsilon(x-y)dx\nu(dy)}\\
&=|h||\nu|(\R^{d_\W})
\end{aligned}$$
where $|\nu|$ denotes the total variation measure, and the last equality follows from $\rho_\epsilon(\cd-y)$ integrates to 1. Since $p\in L^1(\R^{d_\W})$, taking $\epsilon\da0$ yields \eqref{eqn:cond_on_density}.

We note that the Sobolev space $W^{1,1}(\R^{d_\W})\subset BV(\R^{d_\W})$ contains a rich family of densities. Moreover, regular densities with jump discontinuities (e.g., uniforms on regular geometries such as cubes or spheres) usually also have bounded variation. 

With this condition in mind, we look at the following examples. 

\begin{example}\label{example:inventory}
Consider a Markovian multi-item lost-sales inventory control problem with continuous inventory levels.  The state space is
$\X=\set{(x_1,\ds,x_k)\in\R_+^k:\ 0\le x_i\le b_i,\ i=1,\ds,k}$
where $x_i$ and $b_i$ denote the inventory level and storage capacity of item $i$, respectively.  A natural action space is
$\A =\set{(a_1,\ds,a_k)\in\R_+^k:\ 0\le a_i\le b_i,\ i=1,\ds,k}$,
where $a_i$ is the amount of item $i$ ordered and delivered at the beginning of the period.  We assume no lead time between ordering and the arrival of replenishment inventory. Let $X_t$ be the inventory level at the beginning of day $t$, before the arrival of the replenishment order $A_t$. Then, naturally, the inventory levels evolve as
$$
X_{t+1}=\crbk{\min\set{X_t+A_t,b}-D_{t+1}}_+,
$$
where the minimum and positive part are applied componentwise, and $D_{t+1}$ denotes demand. The demand process $\set{D_t\in\R_+^k:t\geq 1}$ is i.i.d., with possibly correlated components. Then the transition kernel is given by $
P(\cdot| x,a)
=\text{Law}\sqbk{(\min\set{x+a,b}-D)_+}.
$

Now, assume that the distribution of $D_t$ satisfies \eqref{eqn:cond_on_density}. Since total variation is non-increasing under the pushforward mapping $(\cd)_+$,
\[
\begin{aligned}
d_{\TV}\crbk{
P(\cdot| x,a),
P(\cdot| x,a')
} &\le
d_{\TV}\crbk{
\text{Law}\crbk{\min\set{x+a,b}-D},
\text{Law}\crbk{\min\set{x+a',b}-D}
} \\
&=
\frac12
\int_{\R^m}
\abs{
p\crbk{\min\set{x+a,b}-u}
-
p\crbk{\min\set{x+a',b}-u}
}
\,du \\
&\le
\frac{\ell}{2}|\min\set{x+a,b}-\min\set{x+a',b}|^{\bar q}\\
&\leq \frac{\ell}{2}|a-a'|^{\bar q},
\end{aligned}
\]
where the last inequality follows from the fact that the $\min$ is $1$-Lipschitz. Thus, Assumption~6 part (ii) holds with $\ell_P=\ell/2$ and exponent $\bar q$.
\end{example}

\begin{example}\label{example:workload}
Consider a workload formulation of a dynamic service allocation problem. We use the same state spaces
$\X=\set{(x_1,\ds,x_k)\in\R_+^k:\ 0\leq x_i\leq b_i,\ i=1,\ds,k}$, where $x_i$ and $b_i$ denote the remaining workload, and buffer capacity of class $i$, respectively. We consider an action space representing possible service-allocation vectors,
$$
\A=\set{(a_1,\ds,a_k)\in\R_+^k:\ \sum_{i=1}^k a_i\leq \zeta,\ 0\leq a_i\leq b_i,\ i=1,\ds,k},
$$
where $a_i$ denotes the service capacity allocated to class $i$, and $\zeta$ is the total available service capacity. 

Let $X_t$ denote the remaining workload vector at the beginning of period $t$.  During the period, a new workload vector $\Xi_{t+1}$ arrives, and the controller allocates service capacity $A_t$ across the $k$ classes.  The allocated service reduces the combined old and newly arrived workload, but the workload cannot become negative and is truncated by the finite buffer capacity.  Thus, the workload evolves as
\[
X_{t+1}=\min\set{(X_t+\Xi_{t+1}-A_t)_+,b}.\]
The arrival process $\set{\Xi_t\in\R_+^m:t\ge 1}$ is assumed to be i.i.d., and the components of $\Xi_t$ may be correlated. The transition kernel is thus given by
$
P(\cdot| x,a)
=
\text{Law}(\min\set{(x+\Xi-a)_+,b}).
$

Now, assume that the distribution of $\Xi_t$ satisfies \eqref{eqn:cond_on_density}. By the same argument as in Example \ref{example:inventory},
\[
\begin{aligned}
d_{\TV}\crbk{
P(\cdot| x,a),
P(\cdot| x,a')
}
&\le
d_{\TV}\crbk{
\text{Law}\crbk{x+\Xi-a},
\text{Law}\crbk{x+\Xi-a'}
} \\
&=
\frac12
\int_{\R^m}
\abs{
p\crbk{u-x+a}
-
p\crbk{u-x+a'}
}
\,du \\
&\le
\frac{\ell}{2}|a-a'|^{\bar q}.
\end{aligned}
\]
Thus, Assumption~6 part (ii) holds with $\ell_P=\ell/2$ and exponent $\bar q$.
\end{example}

\begin{remark}
Both examples satisfy Assumption~6(ii) through the same mechanism. In both cases, for fixed $x$, changing the action translates the exogenous noise distribution before applying a fixed measurable map. Thus, the desired action regularity follows from the $L^1$ translation regularity of the density of the exogenous noise, as in \eqref{eqn:cond_on_density}. This phenomenon extends beyond these two settings to many max-plus-linear systems \citep{cohen1985linear}, which commonly arise in OR applications. However, if the exogenous noise has discrete masses, then an arbitrarily small translation can induce a TV distance proportional to the mass, and the TV regularity may fail.
\end{remark}

\subsection*{Acknowledgments}
J. Blanchet gratefully acknowledges support from DoD through the grant ONR 1398311, also support from NSF via grants 2229012, 2312204, and 2403007 is gratefully acknowledged.
\bibliographystyle{apalike}
\bibliography{ref}

\appendixpage
\appendix

\section{A Generalized H\"older Inequality}
\begin{lemma}
\label{lemma:generalized_holder}
Let $X,Y$ be random variables. Let $p,q\in[1,\infty]$ satisfy $\frac1p+\frac1q\leq 1.$
Then
\[
\E\sqbk{\abs{XY}}
\le
\E\sqbk{\abs{X}^p}^{1/p} 
\E\sqbk{\abs{Y}^q}^{1/q},
\]
with the usual convention that when $p=\infty$ or $q=\infty$, the corresponding factor is interpreted as the essential supremum.
\end{lemma}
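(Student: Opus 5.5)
The statement to prove is Lemma~\ref{lemma:generalized_holder}: the H\"older bound with $\frac1p+\frac1q\le 1$ instead of equality. I will reduce it to the classical H\"older inequality followed by monotonicity of $L^r$ norms under a probability measure.

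\textbf{Boundary and trivial cases.} If $p=\infty$, then $|XY|\le \|X\|_\infty |Y|$ almost surely. Hence $\E|XY|\le \|X\|_\infty\E|Y|\le \|X\|_\infty \E[|Y|^q]^{1/q}$ by Jensen's inequality, since $q\ge1$. The case $q=\infty$ is symmetric. If either right-hand factor is $+\infty$, there is nothing to prove. If one factor is $0$, the corresponding variable vanishes almost surely, so both sides are $0$. So we may assume $p,q\in[1,\infty)$ with finite moments.

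\textbf{Main case.} Given $\frac1p+\frac1q\le1$, define $p'\le p$ through $\frac1{p'}:=1-\frac1q$. Note that $\frac1{p'}\ge\frac1p$. If $q=1$, this forces $p=\infty$, which is already handled. Otherwise $p'\in(1,\infty)$, and $(p',q)$ are exact conjugates.

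The classical H\"older inequality then gives
\[
\E|XY|\le \E[|X|^{p'}]^{1/p'}\,\E[|Y|^q]^{1/q}.
\]
Since $p'\le p$ and $\mathbb P$ is a probability measure, Jensen's inequality applied to the convex map $t\mapsto t^{p/p'}$ gives $\E[|X|^{p'}]^{1/p'}\le \E[|X|^p]^{1/p}$. Combining the two displays yields the claim.

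\textbf{Where care is needed.} There is no real obstacle. The only points requiring attention are the degenerate exponents ($q=1$, or an infinite exponent) and the fact that the monotonicity of $L^r$ norms is used essentially. This monotonicity is why the lemma is stated for probability measures, which is exactly the setting in which it is applied to $\delta_n$ and $\Lambda_n(q)$ in the proof of Theorem~\ref{thm:continuous_action_ub}.
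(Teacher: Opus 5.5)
Your proof is correct and is essentially the paper's argument. The paper absorbs the slack $1-\frac1p-\frac1q$ in one step, applying the three-factor H\"older inequality to $|X|$, $|Y|$, and the constant $1$ with exponents $p,q,r$, where $\frac1r=1-\frac1p-\frac1q$; you split this into two-factor H\"older with the conjugate pair $(p',q)$, followed by the monotonicity $\|X\|_{p'}\le\|X\|_p$, which is itself H\"older against the constant $1$ on a probability space. Your explicit treatment of the degenerate cases ($q=1$, an infinite exponent, zero or infinite moments) is slightly more careful than the paper's, which leaves these to the extended-exponent version of three-factor H\"older.
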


\begin{proof}
Let $$r = \frac{1}{1-\frac{1}{p}-\frac{1}{q}}\ge 1$$ and $r = \infty$ if $\frac1p+\frac1q = 1.$ Apply the three-factor H\"older inequality to the random variables $\abs{X}$, $\abs{Y}$, and $1$ with exponents $p,q,r$. Since
\[
\frac1p+\frac1q+\frac1r=1,
\]
we obtain
\[
\E\abs{XY}
=
\E\sqbk{\abs{X}\abs{Y}\cdot 1}
\le
\E\sqbk{\abs{X}^p}^{1/p} 
\E\sqbk{\abs{Y}^q}^{1/q} 
\E\sqbk{1^r}^{1/r}.
\]
Hence, 
\[
\E{\abs{XY}}
\le
\E\sqbk{\abs{X}^p}^{1/p} 
\E\sqbk{\abs{Y}^q}^{1/q}.
\]
\end{proof}

\section{Proofs of Auxiliary Lemmas for Theorem \ref{thm:continuous_action_ub}}
\label{section:aux_thm_caub}
This section collects proofs of auxiliary lemmas used in the proof of Theorem \ref{thm:continuous_action_ub}.

\subsection{Proof of Lemma \ref{lemma:moment_truncation_bound}}\label{section:proof:lemma:moment_truncation_bound}
\begin{proof}
Fix $\alpha\ge 1/m$ and $u>0$, $v>0$. Define the threshold
\[
s := \crbk{\frac{v}{2u}}^{1/\alpha}
\qquad\text{and hence}\qquad v s^{-\alpha}=2u.
\]
Write
\begin{equation}\label{eqn:tu_min_sep_bd}\E\sqbk{\min\set{2u,\ vG^{-\alpha}}}
=
2u \mathbb P(G\le s) + v\E\sqbk{G^{-\alpha}\1\{G>s\}},
\end{equation}
where $G^{-\alpha}=+\infty$ on $\{G=0\}$.

If $s\ge 1$, then $v\ge 2u$ and 
$G^{-\alpha}\1\{G>s\} \leq \1\{G>s\}$. So, \eqref{eqn:tu_min_sep_bd} implies that $\E\sqbk{\min\set{2u,\ vG^{-\alpha}}}\leq v$ when $s\ge 1$. Since both stated upper bounds in Lemma \ref{lemma:moment_truncation_bound} dominate $v$, it suffices to treat the case $s\in(0,1)$.

Assume now $s\in(0,1)$. By assumption, the first term satisfies
\begin{equation}\label{eqn:tu_min_sep_uPG_bd}
2u \mathbb P(G\le s)\le 2Mu s^{1/m}.
\end{equation}

For the second term, use the tail integral identity:
\begin{equation}\label{eqn:tu_EG_as_int_bd}
\begin{aligned}
\E\sqbk{G^{-\alpha}\1\{G>s\}}
&= \int_0^\infty \mathbb P\crbk{G^{-\alpha}\1\set{G>s}> t} dt\\
&=\int_0^\infty \mathbb P\crbk{s<G < t^{-1/\alpha}} dt\\
&\leq 1 + \int_1^ {s^{-\alpha}} \mathbb P\crbk{s<G < t^{-1/\alpha}} dt
\end{aligned}
\end{equation}
where the contribution to the integral from $t\in(0,1)$ is at most 1, and when $t \geq s^{-\alpha}$, $t^{-1/\alpha}\leq s$, leading to a 0 integrand. For $t\in(1,s^{-\alpha})$, we have $t^{-1/\alpha}\in(s,1)$. Hence, by assumption,
\begin{equation}\label{eqn:tu_prob_int_bd}
\int_1^{s^{-\alpha}} \mathbb P\crbk{s<G<t^{-1/\alpha}} dt
\le
M\int_1^{s^{-\alpha}} t^{-1/(m\alpha)} dt.
\end{equation}
Using this bound, we now conclude separately for $\alpha>1/m$ and $\alpha = 1/m$. 

\paragraph{Case $\alpha>1/m$.} In this case, $1/(m\alpha)\in(0,1)$ and \eqref{eqn:tu_prob_int_bd} is bounded by
\[
M\int_1^{s^{-\alpha}} t^{-1/(m\alpha)} dt
= \frac{Mm\alpha}{m\alpha-1}\crbk{s^{1/m-\alpha}-1} \leq C s^{1/m-\alpha},
\]
where $C<\infty$ depending only on $(M,m,\alpha)$. Therefore, from \eqref{eqn:tu_EG_as_int_bd}, 
\[
v\E[G^{-\alpha}\1\{G>s\}]
\le v + C v s^{1/m-\alpha}
= v + C u^{1-1/(m\alpha)}v^{1/(m\alpha)}. 
\]

Moreover, from \eqref{eqn:tu_min_sep_uPG_bd}, 
\[
2u \mathbb P(G\le s)\le 2Mu s^{1/m}
= 2M 2^{-1/(m\alpha)}u^{1-1/(m\alpha)}v^{1/(m\alpha)}
\]
matching the power of the expectation term. 

Therefore, these bounds and \eqref{eqn:tu_min_sep_bd} together implies Lemma \ref{lemma:moment_truncation_bound} (i).

\paragraph{Case $\alpha=1/m$.} In this case, the integral in \eqref{eqn:tu_prob_int_bd} is
\[
M\int_1^{s^{-\alpha}} t^{-1/(m\alpha)} dt
= M\int_1^{s^{-m}} t^{-1} dt
= Mm\log(s^{-1}).
\]
Since $s=(v/(2u))^m < 1$, we have $\log(s^{-1})=\log(2u/v)$, and therefore
\[
v\E[G^{-1/m}\1\{G>s\}]
\le v + Mm v\log\crbk{\frac{2u}{v}}.
\]
Moreover, noting that when $s < 1$
\[
\log\crbk{\frac{2u}{v}}= \max\set{0, \log\crbk{\frac{2u}{v}}},
\]

Now, \eqref{eqn:tu_min_sep_uPG_bd} implies that $2u \mathbb P(G\le s)\le 2Mu s^{1/m}=Mv$. 
These bounds together gives Lemma \ref{lemma:moment_truncation_bound} (ii).
\end{proof}

\subsection{Proof of Lemma \ref{lemma:critical_log}}\label{section:proof:lemma:critical_log}
\begin{proof}
To simplify notation, write
\[
\log_+(x):=\max\set{0, \log x}
\]
for all $x > 0$. For $n\geq 2$, let
\[
\theta_n:=\frac{1}{1+\log n}\in(0,1). 
\]
For any $x>0$ and any $\theta>0$, we have
\[
\log_+(x)\le \frac{x^\theta}{\theta}.
\]
Indeed, this is trivial when $x\le 1$. For $x>1$, $d_x\log(x) = x^{-1} \leq d_x(x^\theta/\theta) = x^{\theta-1}$ and $\log(1)= 0 < 1/\theta$. So, $\log x\le x^\theta/\theta$ when $x>1$.

Applying this with $x=2\delta_n/\Lambda_n(q)^\beta > 0$ and $\theta=\theta_n$, we get
\[
\Lambda_n(q)^\beta \log_+\crbk{\frac{2\delta_n}{\Lambda_n(q)^\beta}}
\le
\frac{1}{\theta_n}\Lambda_n(q)^\beta \crbk{\frac{2\delta_n}{\Lambda_n(q)^\beta}}^{\theta_n}
=
\frac{2^{\theta_n}}{\theta_n}\delta_n^{\theta_n}\Lambda_n(q)^{\beta(1-\theta_n)}.
\]
Therefore
\begin{equation}
\label{eqn:critical_log_reduce_to_mixed_moment}
\E\sqbk{\Lambda_n(q)^\beta \log_+\crbk{\frac{2\delta_n}{\Lambda_n(q)^\beta}}}
\le
\frac{2^{\theta_n}}{\theta_n}
\E\sqbk{\delta_n^{\theta_n}\Lambda_n(q)^{\beta(1-\theta_n)}}.
\end{equation}

Now set
\[
p_n:=\frac{2}{\theta_n},
\qquad
q_n:=\frac{k_\beta}{\beta(1-\theta_n)}.
\]
Since $k_\beta=\ceil{\beta+1}>\beta$ and $\theta_n\in(0,1)$,
\[
\frac{1}{p_n}+\frac{1}{q_n}
=
\frac{\theta_n}{2}+\frac{\beta(1-\theta_n)}{k_\beta}
=
\frac{\beta}{k_\beta}
+
\theta_n\crbk{\frac12-\frac{\beta}{k_\beta}}
<1.
\]
Hence, the generalized H\"older inequality in Lemma \ref{lemma:generalized_holder} gives
\[
\E\sqbk{\delta_n^{\theta_n}\Lambda_n(q)^{\beta(1-\theta_n)}}
= \E\sqbk{\delta_n^{2/p_n}\Lambda_n(q)^{k_\beta/q_n}} \le
\E\sqbk{\delta_n^2}^{1/p_n}
\E\sqbk{\Lambda_n(q)^{k_\beta}}^{1/q_n}.
\]
Therefore, by Assumption \ref{assump:envelop_moment_bounds},
\begin{equation} \label{eqn:tu_delta_Lambda_moment_bd}
\E\sqbk{\delta_n^{\theta_n}\Lambda_n(q)^{\beta(1-\theta_n)}}
\le
C n^{-\theta_n/2}n^{-\beta(1-\theta_n)/2}
=
C n^{-\beta/2}n^{(\beta-1)\theta_n/2},
\end{equation}
where $C<\infty$ depends only on $(\beta,C_\delta,C_{\Lambda,k_\beta})$.

Since
\[
\theta_n\log n=\frac{\log n}{1+\log n}\le 1,
\]
we have
\[
n^{(\beta-1)\theta_n/2}
=
\exp\crbk{\frac{\beta-1}{2}\theta_n\log n}
\le
\exp\crbk{\frac{\beta-1}{2}}.
\]
Also, $2^{\theta_n}\le 2$ and
\[
\frac{1}{\theta_n}=1+\log n \leq 2\log 2+\log n\le 3\log n,
\qquad n\ge 2.
\]
Substituting these bounds into \eqref{eqn:tu_delta_Lambda_moment_bd} and \eqref{eqn:critical_log_reduce_to_mixed_moment}, we obtain
\begin{align*}
\E\sqbk{\Lambda_n(q)^\beta\log_+\crbk{\frac{2\delta_n}{\Lambda_n(q)^\beta}}}
&\le
C \frac{2^{\theta_n}}{\theta_n} n^{-\beta/2}n^{(\beta-1)\theta_n/2}\\
&\leq 6e^{(\beta-1)/2}C  n^{-\beta/2}\log n 
\end{align*}
for all $n\ge 2$. This proves Lemma \ref{lemma:critical_log}
\end{proof}

\subsection{Proof of Lemma \ref{lemma:inv_moment}}\label{section:proof:lemma:inv_moment}
\begin{proof}
When $\alpha = 0$, Lemma \ref{lemma:inv_moment} holds trivially. For $\alpha\in(0,1/m)$, using the tail integral identity as in \eqref{eqn:tu_EG_as_int_bd} and the assumption on $G$, we get
\begin{align*}
\E\sqbk{G^{-\alpha}\1\{G>0\}}
&\le
1 + \int_1^\infty \mathbb P(0<G<t^{-1/\alpha}) dt.\\
&\leq 1+M\int_1^\infty t^{-1/(m\alpha)} dt
\\
&=1+\frac{Mm\alpha}{1-m\alpha}.
\end{align*}
\end{proof}
\section{Proof of Theorem \ref{thm:minimax_lb}}\label{section:proof:thm:minimax_lb}

\begin{proof}
The proof is structured as follows. First, we show that for each fixed $\theta\in\Theta$, $m > 0$, $p>q\ge 0$, and $\dagger\in\set{+,-}$, there exist adapted
Q-estimators $\set{\widehat Q_n:n\ge1}$ satisfying Assumption \ref{assump:envelop_moment_bounds}
with $Q^*$ replaced by $Q_\theta^\dagger$. Second, we establish the lower bounds
\eqref{eq:lb_margin_mass} and \eqref{eq:lb_growth}.

\subsubsection*{Data generating processes and adapted Q-estimators.}
Fix $\theta\in\Theta=[-1,1]$ and $\dagger\in\set{+,-}$. We work on the canonical space $(\Omega=\W^{\N},\cF=\cB(\W)^{\N},P_\theta^\dagger:=(\psi_\theta^\dagger)^{ \N})$, where $\psi_\theta^+$ and $\psi_\theta^-$ are defined in \eqref{eqn:psitheta+}
and \eqref{eqn:psitheta-}, respectively, and $\N$ is the set of positive integers. 

Define the sequence $\set{W_i=(Y_i,D_i):i=1,2,\ds}$ by the coordinate projection mapping $W_i(\omega) = \omega_i = (y_i,d_i)\in \W$. Let $\cD_n = \sigma\set{W_i:i=1,\ds,n}$. Thus, under $P_\theta^\dagger$, $\set{W_i:i\in\N}$ are i.i.d. with common law $\psi_\theta^\dagger$. 

Define the empirical means
\[
\bar Y_n:=\frac{1}{n}\sum_{i=1}^n Y_i,\quad\text{and}\quad
\bar D_n:=\frac{1}{n}\sum_{i=1}^n D_i.
\]
By construction, $\bar Y_n$ and $\bar D_n$ are $\cD_n$-measurable, and $\bar D_n=d_\theta^\dagger$ a.s.$P_\theta^\dagger$ for all $n\geq 1$, where $d_\theta^\dagger$ is defined in \eqref{eqn:psitheta+} and \eqref{eqn:psitheta-}.

We now define the adapted plug-in estimators. For the $+$ family, let
\begin{equation}\label{eqn:minimax_lb_Qhat_plus}
\widehat Q_n^+(x,a)
:=
\begin{cases}
x^m h_p(a)+\bar Y_n\1\set{x\le \bar D_n}\crbk{1-h_p(a)}, & x\in[0,1],\\
0, & x=2,
\end{cases}
\end{equation}
where $h_p$ is defined in \eqref{eqn:def_hp}. For the $-$ family, let
\begin{equation}\label{eqn:minimax_lb_Qhat_minus}
\widehat Q_n^-(x,a)
:=
\begin{cases}
\bar Y_n\,\sgn(a)\crbk{|a|^q\wedge \bar D_n^q}-\big||a|-\bar D_n\big|^p, & x\in[0,1],\\
0, & x=2.
\end{cases}
\end{equation}
Both $\widehat Q_n^+$ and $\widehat Q_n^-$ are $\cD_n$-adapted Q-estimation algorithms in the sense of
the preceding definition. Also, for each fixed $\omega$ and $x$, the maps
$a\to \widehat Q_n^\dagger(\omega,x,a)$ are continuous on $\A$.

We note that since $Y_i\in\set{-1,1}$ and $E_\theta[Y_i]=\theta$ under either family, $\bar Y_n-\theta$ is centered and
sub-Gaussian with variance proxy $1/n$. Therefore, by standard sub-Gaussian moment bounds
(see, e.g., \citet[Section 2.1]{wainwright2019HD}), for every integer $k\ge 2$, there exists a constant
$C_k<\infty$, depending only on $k$, such that
\begin{equation}\label{eqn:tu_subgaussian_moments}
E_\theta\sqbk{|\bar Y_n-\theta|^k}\le C_k n^{-k/2},
\qquad k\ge 2.
\end{equation}

We proceed with the proof by considering the $+$ and $-$ families separately.

\subsubsection*{Proof of Theorem \ref{thm:minimax_lb} for the $\set{\cM_\theta^+:\theta\in\Theta}$ family.}

\paragraph{Verifying Assumption \ref{assump:envelop_moment_bounds}.}
Since $\bar D_n=d_\theta^+$ a.s.$P_\theta^+$, \eqref{eqn:minimax_lb_Qhat_plus} and \eqref{eqn:Qtheta+} give
\[
\widehat Q_n^+(x,a)-Q_\theta^+(x,a)
=
(\bar Y_n-\theta)\1\set{x\le d_\theta^+}\crbk{1-h_p(a)},
\]
for all $(x,a)\in \Z$ a.s.$P_\theta^+$. Since $h_p(a)\in[0,1]$ for all $a\in\A$,
\begin{equation}\label{eqn:tu_ub_Qtheta+_delta}
\sup_{z\in\Z}\abs{\widehat Q_n^+(z)-Q_\theta^+(z)}
\le
\abs{\bar Y_n-\theta},
\quad \text{a.s.$P_\theta^+$.}
\end{equation}

Next, fix $x\in\X$ and $a\neq b$. Then
\begin{align*}
&\frac{\abs{\crbk{\widehat Q_n^+(x,a)-Q_\theta^+(x,a)}-\crbk{\widehat Q_n^+(x,b)-Q_\theta^+(x,b)}}}{|a-b|^q}=
\abs{\bar Y_n-\theta}\1\set{x\le d_\theta^+}
\frac{\abs{h_p(a)-h_p(b)}}{|a-b|^q}.
\end{align*}
If $q=0$, then the last quotient is bounded by $2$. If $q\in(0,1]$, then
\[
h_p(a)=\frac{(1+a)^p}{(1+a)^p+(1-a)^p},
\]
the denominator is bounded away from $0$ on $[-1,1]$. Recall that $p>q$, the maps
$a\ra (1+a)^p$ and $a\ra (1-a)^p$ are $q$-H\"older on $[-1,1]$.
Hence there exists $0 <\ell_{p,q}^+ < \infty$, depending only on $(p,q)$, such that for all $a,b\in[-1,1]$
\[
|h_p(a)-h_p(b)|\le \ell_{p,q}^+|a-b|^q. 
\]
Consequently,
\begin{equation}\label{eqn:tu_ub_Qtheta+_Lambda}
\sup_{x\in\X}\sup_{a\neq b}
\frac{\abs{\crbk{\widehat Q_n^+(x,a)-Q_\theta^+(x,a)}-\crbk{\widehat Q_n^+(x,b)-Q_\theta^+(x,b)}}}{|a-b|^q}
\le
(\ell_{p,q}^+\vee 2)\abs{\bar Y_n-\theta},
\qquad \text{a.s.$P_\theta^+$.}
\end{equation}

Thus,  in view of \eqref{eqn:tu_ub_Qtheta+_delta} and \eqref{eqn:tu_ub_Qtheta+_Lambda}, we define
\[
\delta_n^+:=\abs{\bar Y_n-\theta}+n^{-17}
\quad\text{and}\quad
\Lambda_n^+(q):=(\ell_{p,q}^+\vee 2)\abs{\bar Y_n-\theta}+n^{-17}.
\]
Applying \eqref{eqn:tu_subgaussian_moments}, we conclude that Assumption \ref{assump:envelop_moment_bounds} holds for the $+$ family with positive measurable envelopes $\delta_n^+$ and $\Lambda_n^+(q)$.

\paragraph{Minimax lower bound for $\set{\cM_\theta^+:\theta\in\Theta}$.}
Fix $n\ge 1$ and let $\hat\pi_n\in \mrm{PLA}_n$ be an arbitrary adapted policy learning algorithm.
Since $\hat\pi_n$ is $\cD_n$-adapted and $\cD_n=\sigma\set{W_1,\ds,W_n}$, by the Doob–Dynkin lemma we may identify $\hat\pi_n$ with a measurable map in $m\set{\cW^n\times\cX\ra \cA }$, and we suppress the dependence on $w_1,\ds,w_n$ in the notation below.

To simplify notation, let
\[
\Psi_{\theta}^n(d\bd{w}_n) :=\Psi_{\theta}^n(dw_1,\ds,dw_n):=\psi_\theta^+(dw_1)\psi_\theta^+(dw_2)\ds \psi_\theta^+(dw_n);
\]
i.e. $\Psi^n_\theta$ is the law of $W_1,\ds,W_n$ under $P_\theta^+$.

We consider $\theta = \pm\,\xi$ where
\[
\xi:=\frac{1}{4\sqrt n},\quad \text{and recall that}\quad d_\xi^+=\crbk{\frac{\xi}{2}}^{1/m}.
\] We consider the following $\cD_n$ measurable random variable on $(\Omega,\cF)$
\begin{equation}\label{eqn:tu_def_L_n_+}
\widehat L_n
:=
\int_{[0,d_\xi^+]} h_p\crbk{\hat\pi_n(x)}\mu(dx);
\end{equation}
where the $\omega$ dependence comes only from $\hat\pi_n$. 
Since $\mu$ is the uniform distribution on $[0,1]$ and $0\leq h_p\leq 1$
\begin{equation}\label{eqn:tu_Ln_bd}
0\le \widehat L_n\le \mu\crbk{[0,d_\xi^+]}=d_\xi^+.
\end{equation}

Recall from \eqref{eqn:def_reg_hard_M} that for $\theta = \pm \xi$,
\[
\reg\crbk{\hat\pi_n;\cM_\theta^+}
=
\int_{[0,1]}\crbk{V_\theta^+(x)-Q_\theta^+\crbk{x,\hat\pi_n(x)}}\mu(dx). 
\]

Consider first the instance $\cM_{\xi}^+$. If $x\in[0,d_\xi^+]$, then
$x^m\le (d_\xi^+)^m=\xi/2$. Moreover, by \eqref{eqn:tu_A_V_breakdown}, the optimal action set for $x\in[0,d_\xi^+]$ is $\set{-1}$. Thus, 
\[
Q_{\xi}^+(x,-1)-Q_{\xi}^+(x,a)
=
(\xi-x^m)h_p(a)
\ge \frac{\xi}{2}h_p(a),
\]
for all $a\in \A$. By the non-negativity of $V_\theta^+(x) - Q_\theta^+(x,a)$, integrating over $x\in[0,d_\xi^+]$ gives
\[
\reg\crbk{\hat\pi_n;\cM_{\xi}^+} \ge
\int_{[0,d_\xi^+]}\crbk{Q_{\xi}^+(x,-1)-Q_\xi^+\crbk{x,\hat\pi_n(x)}}\mu(dx)
\ge
\frac{\xi}{2}\widehat L_n.
\]
Hence, identifying $\hat \pi_n$ with a measurable function in $m\set{\cW^n\times\cX\ra \cA }$ and inducing the measurability of $\widehat L_n$ from \eqref{eqn:tu_def_L_n_+}, we have
\begin{equation}\label{eqn:tu_+_reg_+delta}
E_\xi\sqbk{\reg\crbk{\hat\pi_n;\cM_{\xi}^+}} = \int_{\W^n}\reg\crbk{\hat\pi_n;\cM_{\xi}^+}\Psi^{n}_\xi(d\bd{w}_n)
\ge
\frac{\xi}{2}\int_{\W^n}\widehat L_n\Psi^{n}_\xi(d\bd{w}_n).
\end{equation}

Now consider the instance $\cM_{-\xi}^+$. Again by \eqref{eqn:tu_A_V_breakdown}, the optimal action
on $[0,d_\xi^+]$ is $1$, and
\[
Q_{-\xi}^+(x,1)-Q_{-\xi}^+(x,a)
=
(x^m+\xi)\crbk{1-h_p(a)}
\ge
\frac{\xi}{2}\crbk{1-h_p(a)}
\]
for all $x\in[0,d_\xi^+]$, and $a\in\A$. 
Therefore
\[
\reg\crbk{\hat\pi_n;\cM_{-\xi}^+}
\ge
\frac{\xi}{2}\int_{[0,d_\xi^+]}\crbk{1-h_p\crbk{\hat\pi_n(x)}}\,\mu(dx)
=
\frac{\xi}{2}\crbk{d_\xi^+-\widehat L_n}.
\]
We take expectations on both sides and rewrite using the law of $W_1,\ds,W_n$ to get
\begin{equation}\label{eqn:tu_+_reg_-delta}
E_{-\xi}\sqbk{\reg\crbk{\hat\pi_n;\cM_{-\xi}^+}}
\ge
\frac{\xi}{2}\crbk{d_\xi^+-\int_{\W^n}\widehat L_n\Psi^{n}_{-\xi}(d\bd{w}_n)}.
\end{equation}

Note that $\max\{a,b\}\geq \frac{1}{2}(a+b)$. Taking the maximum of \eqref{eqn:tu_+_reg_+delta} and \eqref{eqn:tu_+_reg_-delta} yields
\begin{equation}\label{eqn:tu_+_reg_tv_bd}
\begin{aligned}
\max_{\theta \in\set{\xi,-\xi}} E_{\theta}\sqbk{\reg\crbk{\hat\pi_n;\cM_{\theta}^+}}&\ge
\frac{\xi}{4}\crbk{d_\xi^+ + \int_{\W^n}\widehat L_n\Psi^{n}_\xi(d\bd{w}_n)-\int_{\W^n}\widehat L_n\Psi^{n}_{-\xi}(d\bd{w}_n)}\\
&=
\frac{\xi}{4}\crbk{d_\xi^+ - \int_{\W^n}\widehat L_n[\Psi^{n}_{-\xi}-\Psi^{n}_\xi](d\bd{w}_n)}\\
&\geq \frac{\xi}{4}d_\xi^+\crbk{1 - d_{\mrm{TV}}\crbk{\Psi^{n}_\xi,\Psi^{n}_{-\xi}}}
\end{aligned}
\end{equation}
where $d_{\mrm{TV}}$ denotes the total variation distance and the last inequality uses \eqref{eqn:tu_Ln_bd} and $\int f(d\mu-d\nu)\leq \norm f d_{\mrm{TV}}(\mu,\nu)$ for bounded function $f$ and probability measures $\mu$ and $\nu$.

We now bound the total variation distance by the Kullback–Leibler (KL) divergence. By Pinsker's inequality,
\begin{equation}\label{eqn:tu_+_tv_kl}
d_{\mrm{TV}}\crbk{\Psi^{n}_\xi,\Psi^{n}_{-\xi}}
\le
\sqrt{\frac12\KL\crbk{\Psi^{n}_\xi\middle\|\Psi^{n}_{-\xi}}}.
\end{equation}
Since under $\psi_\xi^+$ and $\psi_{-\xi}^+$ the law of $D$ is the same deterministic value
$d_\xi^+$, only the law for $Y$ differs. Hence, by the property of the KL divergence for product measures, 
\begin{align*}
\KL\crbk{\Psi^{n}_\xi\,\middle\|\,\Psi^{n}_{-\xi}}
&=
n\KL\crbk{\psi_{\xi}^+\,\middle\|\,\psi_{-\xi}^+}\\
&=
n\sqbk{\frac{1+\xi}{2}\log\frac{1+\xi}{1-\xi}+\frac{1-\xi}{2}\log\frac{1-\xi}{1+\xi}}\\
&=n\sqbk{\frac{1+\xi}{2}-\frac{1-\xi}{2}}
\log\frac{1+\xi}{1-\xi} \\
&=  n\xi\log\frac{1+\xi}{1-\xi}.   
\end{align*}

Since $\xi\le 1/4$,
\[
\log\frac{1+\xi}{1-\xi}
=
\log\crbk{1+\frac{2\xi}{1-\xi}}
\le
\frac{2\xi}{1-\xi}
\le
4\xi.
\]
Therefore,
\begin{equation}\label{eqn:tu_KL_ub}
\KL\crbk{\Psi^{n}_\xi\,\middle\|\,\Psi^{n}_{-\xi}}
\le
4n\xi^2=\frac14.
\end{equation}
Combining this with \eqref{eqn:tu_+_tv_kl}, we obtain
$d_{\mrm{TV}}\crbk{\Psi^{n}_\xi,\Psi^{n}_{-\xi}}
\le1/\sqrt 8\leq 1/2.$
Substituting this bound into \eqref{eqn:tu_+_reg_tv_bd} yields
\[
\max_{\theta \in\set{\xi,-\xi}} E_{\theta}\sqbk{\reg\crbk{\hat\pi_n;\cM_{\theta}^+}}
\ge
\frac{\xi d_\xi^+}{8}.
\]

Finally, recalling the choice $\xi = 1/(4\sqrt n)$ and the definition of $d_\xi^+$, we obtain
\[
\frac{\xi d_\xi^+}{8}
=
\frac{1}{8}\xi\crbk{\frac{\xi}{2}}^{1/m}
=
2^{-5-3/m}n^{-(m+1)/(2m)}.
\]
Therefore, for every $\hat\pi_n\in \mrm{PLA}_n$,
\[
\max_{\theta\in\Theta}
E_{\theta}\sqbk{\reg\crbk{\hat\pi_n;\cM_{\theta}^+}}
\ge\max_{\theta \in\set{\xi,-\xi}} E_{\theta}\sqbk{\reg\crbk{\hat\pi_n;\cM_{\theta}^+}}\ge
2^{-5-3/m}n^{-(m+1)/(2m)}.
\]
Taking the infimum over $\hat\pi_n\in\mrm{PLA}_n$ yields \eqref{eq:lb_margin_mass}.

\subsubsection*{Proof of Theorem \ref{thm:minimax_lb} for the $\set{\cM_\theta^-:\theta\in\Theta}$ family.}

\paragraph{Verifying Assumption \ref{assump:envelop_moment_bounds}.}
Since $\bar D_n=d_\theta^-$ a.s.$P_\theta^-$, \eqref{eqn:minimax_lb_Qhat_minus} and \eqref{eqn:Qtheta-} give
\[
\widehat Q_n^-(x,a)-Q_\theta^-(x,a)
=
(\bar Y_n-\theta)\sgn(a)\crbk{|a|^q\wedge (d_\theta^-)^q},
\]
for all $(x,a)\in \Z$ a.s.$P_\theta^-$. Recall that $d_\theta^- = |\theta|^{1/(p-q)}\le 1$. Hence,
\begin{equation}\label{eqn:tu_ub_Qtheta-_delta}
\sup_{z\in\Z}\abs{\widehat Q_n^-(z)-Q_\theta^-(z)}
\le
\abs{\bar Y_n-\theta},
\quad \text{a.s.$P_\theta^-$.}
\end{equation}

Next, fix $x\in\X$ and $a\neq b$. Then
\begin{align*}
&\frac{\abs{\crbk{\widehat Q_n^-(x,a)-Q_\theta^-(x,a)}-\crbk{\widehat Q_n^-(x,b)-Q_\theta^-(x,b)}}}{|a-b|^q}
=
\abs{\bar Y_n-\theta}
\frac{\abs{\sgn(a)\crbk{|a|^q\wedge (d_\theta^-)^q}-\sgn(b)\crbk{|b|^q\wedge (d_\theta^-)^q}}}{|a-b|^q}.
\end{align*}
If $q=0$, then the last quotient is bounded by $2$. If $q\in(0,1]$, we apply the following lemma, whose proof is deferred to Appendix \ref{section:proof:lemma:sign_power_holder}. 

\begin{lemma}\label{lemma:sign_power_holder}
Let $q\in(0,1]$. Then for all $a,b\in[-1,1]$,
\[
\bigl|\sgn(a)|a|^q-\sgn(b)|b|^q\bigr|
\le 2^{1-q}|a-b|^q.
\]
\end{lemma}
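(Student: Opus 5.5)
Write $\phi(a):=\sgn(a)|a|^q$; we must show $|\phi(a)-\phi(b)|\le 2^{1-q}|a-b|^q$ for $a,b\in[-1,1]$. The plan is to split according to the signs of $a$ and $b$. Since $\phi$ is odd, the statement is symmetric under $(a,b)\mapsto(-a,-b)$, and it is also symmetric under swapping $a$ and $b$. So it suffices to handle two cases: (a) $a,b\ge 0$ (same sign), and (b) $a>0>b$ (opposite signs). If either point is $0$ it falls into case (a) after possibly reflecting.

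\emph{Case (a).} Assume $0\le b\le a$. We use the subadditivity of $t\mapsto t^q$ on $[0,\infty)$ for $q\in(0,1]$: $a^q\le (a-b)^q+b^q$. This follows from concavity together with $0^q=0$, or from $(s+t)^q\le s^q+t^q$, which holds because $x\mapsto x^q$ is concave with value $0$ at the origin. Hence $|\phi(a)-\phi(b)|=a^q-b^q\le |a-b|^q\le 2^{1-q}|a-b|^q$, since $2^{1-q}\ge1$.

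\emph{Case (b).} Assume $a>0>b$ and write $s=a$, $t=-b>0$. Then $|\phi(a)-\phi(b)|=s^q+t^q$, while $|a-b|=s+t$. By concavity of $x\mapsto x^q$ (the power mean inequality, or Jensen applied to the midpoint), $\tfrac{s^q+t^q}{2}\le\left(\tfrac{s+t}{2}\right)^q$. This gives $s^q+t^q\le 2^{1-q}(s+t)^q=2^{1-q}|a-b|^q$, which is exactly the constant in the statement and is sharp at $s=t$.

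Combining the cases, with the symmetry reductions, proves the lemma. Neither step is hard. The only point that needs care is checking that case (b) yields the constant $2^{1-q}$ and not something larger; the Jensen midpoint inequality settles this directly. The restriction to $[-1,1]$ is never used, so the bound in fact holds on all of $\R$.
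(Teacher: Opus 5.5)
Your proof is correct and follows the paper's argument essentially verbatim. The paper makes the same same-sign/opposite-sign split: the same-sign case uses $\bigl||a|^q-|b|^q\bigr|\le \bigl||a|-|b|\bigr|^q\le |a-b|^q$, and the opposite-sign case uses concavity to get $|a|^q+|b|^q\le 2^{1-q}(|a|+|b|)^q$ together with $|a|+|b|=|a-b|$.
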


Moreover, $\sgn(a)\crbk{|a|^q\wedge (d_\theta^-)^q}$ clips $\sgn(a)|a|^q$ within the interval $\sqbk{-(d_\theta^-)^q,(d_\theta^-)^q}$ and clipping operation is $1$-Lipschitz. This, combined with Lemma \ref{lemma:sign_power_holder}, leads to
\[
\abs{\sgn(a)\crbk{|a|^q\wedge (d_\theta^-)^q}-\sgn(b)\crbk{|b|^q\wedge (d_\theta^-)^q}}
\le
2^{1-q}|a-b|^q
\le
2|a-b|^q.
\]
Consequently,
\begin{equation}\label{eqn:tu_ub_Qtheta-_Lambda}
\sup_{x\in\X}\sup_{a\neq b}
\frac{\abs{\crbk{\widehat Q_n^-(x,a)-Q_\theta^-(x,a)}-\crbk{\widehat Q_n^-(x,b)-Q_\theta^-(x,b)}}}{|a-b|^q}
\le
2\abs{\bar Y_n-\theta},
\quad \text{a.s.$P_\theta^-$.}
\end{equation}

Thus, in view of \eqref{eqn:tu_ub_Qtheta-_delta} and \eqref{eqn:tu_ub_Qtheta-_Lambda}, we define
\[
\delta_n^-:=\abs{\bar Y_n-\theta}+n^{-17}
\quad\text{and}\quad
\Lambda_n^-(q):=2\abs{\bar Y_n-\theta}+n^{-17}.
\]
Applying \eqref{eqn:tu_subgaussian_moments}, we conclude that Assumption \ref{assump:envelop_moment_bounds}
holds for the $-$ family with positive measurable envelopes $\delta_n^-$ and $\Lambda_n^-(q)$.

\paragraph{Minimax lower bound for $\set{\cM_\theta^-:\theta\in\Theta}$.}
Fix $n\ge 1$ and let $\hat\pi_n\in \mrm{PLA}_n$ be an arbitrary adapted policy learning algorithm.
As in the $+$ case, we identify $\hat\pi_n$ with a measurable map in $m\set{\cW^n\times\cX\ra \cA }$.

We override the notation and define
\[
\Psi_{\theta}^n(d\bd{w}_n):=\Psi_{\theta}^n(dw_1,\ds,dw_n):=\psi_\theta^-(dw_1)\psi_\theta^-(dw_2)\ds \psi_\theta^-(dw_n);
\]
i.e. $\Psi^n_\theta$ is the law of $W_1,\ds,W_n$ under $P_\theta^-$. 

We consider the same choice $\theta=\pm\ \xi$ where
$\xi:=1/(4\sqrt{n})$ and recall that $
d_\xi^-:=\xi^{1/(p-q)}.$
We consider the following $\cD_n$ measurable random variable on $(\Omega,\cF)$:
\begin{equation}\label{eqn:tu_def_S_n_-}
\widehat S_n
:=
\mu\crbk{\set{x\in[0,1]:\hat\pi_n(x)\ge 0}}.
\end{equation}
Since $\mu$ is a probability measure on $[0,1]$, $\widehat S_n\in[0,1]$.

Consider first the instance $\cM_{\xi}^-$. By \eqref{eqn:tu_A_V_breakdown_minus},
\[
\A_\xi^-(x)=\set{d_\xi^-}
\quad\text{and}\quad
V_\xi^-(x)=\xi(d_\xi^-)^q=(d_\xi^-)^p
\]
for $x\in[0,1]$. If $\hat\pi_n(x)<0$, then \eqref{eqn:Qtheta-} gives
\[
Q_{\xi}^-\crbk{x,\hat\pi_n(x)}
=
\xi\,\sgn(\hat\pi_n(x))\crbk{|\hat\pi_n(x)|^q\wedge (d_\xi^-)^q}
-
\big||\hat\pi_n(x)|-d_\xi^-\big|^p
\le 0.
\]
Moreover, $V_\xi^-(x)-Q_\xi^-(x,a)\ge 0$ for all $x\in\X$ and $a\in\A$. So, we must have
\[
V_{\xi}^-(x)-Q_{\xi}^-\crbk{x,\hat\pi_n(x)}
\ge
(d_\xi^-)^p\1\set{\hat\pi_n(x)<0},
\]
and integrating over $x$ yields
\begin{equation}\label{eqn:tu_-_reg_+delta}
\reg\crbk{\hat\pi_n;\cM_{\xi}^-}
\ge
(d_\xi^-)^p\mu\crbk{\set{x\in[0,1]:\hat\pi_n(x)<0}}
=
(d_\xi^-)^p(1-\widehat S_n).
\end{equation}

Now consider the instance $\cM_{-\xi}^-$. Again by \eqref{eqn:tu_A_V_breakdown_minus},
\[
\A_{-\xi}^-(x)=\set{-d_\xi^-}
\quad\text{and}\quad
V_{-\xi}^-(x)=\xi(d_\xi^-)^q=(d_\xi^-)^p,
\]
for $x\in[0,1]$. In this case, if $\hat\pi_n(x)\ge 0$, then \eqref{eqn:Qtheta-} gives
\[
Q_{-\xi}^-\crbk{x,\hat\pi_n(x)}
=
-\xi\crbk{|\hat\pi_n(x)|^q\wedge (d_\xi^-)^q}
-
\big||\hat\pi_n(x)|-d_\xi^-\big|^p
\le 0.
\]
Therefore, the same reasoning as before implies that
\[
V_{-\xi}^-(x)-Q_{-\xi}^-\crbk{x,\hat\pi_n(x)}
\ge
(d_\xi^-)^p\1\set{\hat\pi_n(x)\ge 0}.
\]
Integrating over $x$ yields
\begin{equation}\label{eqn:tu_-_reg_-delta}
\reg\crbk{\hat\pi_n;\cM_{-\xi}^-}
\ge
(d_\xi^-)^p\widehat S_n.
\end{equation}

Taking expectations using the law of $W_1,\ds ,W_n$ under $E_\xi$ and $E_{-\xi}$ respectively, we obtain
\begin{equation}\label{eqn:tu_-_reg_tv}
\begin{aligned}
\max_{\theta \in\set{\xi,-\xi}} E_{\theta}\sqbk{\reg\crbk{\hat\pi_n;\cM_{\theta}^-}}
&\ge \frac{1}{2}\crbk{E_\xi\sqbk{\reg\crbk{\hat\pi_n;\cM_{\xi}^-}}
 + E_{-\xi}\sqbk{\reg\crbk{\hat\pi_n;\cM_{-\xi}^-}}}\\
&\stackrel{(i)}{\ge} \frac{1}{2}\crbk{
\int_{\W^n}(d_\xi^-)^p(1-\widehat S_n) d\Psi_\xi^n + \int_{\W^n}(d_\xi^-)^p\widehat S_n d\Psi_{-\xi}^n }\\
&= \frac{(d_\xi^-)^p}{2}\crbk{1-
\int_{\W^n}\widehat S_n d[\Psi_\xi^n -\Psi_{-\xi}^n] }\\
&\geq
\frac{(d_\xi^-)^p}{2}\crbk{1-d_{\mrm{TV}}\crbk{\Psi^{n}_{\xi},\Psi^{n}_{-\xi}}},
\end{aligned}
\end{equation}
where $(i)$ applies \eqref{eqn:tu_-_reg_+delta} and \eqref{eqn:tu_-_reg_-delta} and the last inequality follows from $\widehat S_n\in[0,1]$. 

As in the $+$ case, we can apply Pinsker's inequality and obtain
\[
d_{\mrm{TV}}\crbk{\Psi^{n}_{\xi},\Psi^{n}_{-\xi}}
\leq \sqrt{\frac12\KL\crbk{\Psi^{n}_{\xi}\,\middle\|\,\Psi^{n}_{-\xi}}}\leq \sqrt{\frac{1}{8}}<
\frac12.
\]
where the second inequality follows from the same calculation that arrives at \eqref{eqn:tu_KL_ub}. 
Substituting this bound into \eqref{eqn:tu_-_reg_tv} yields
\[
\max_{\theta \in\set{\xi,-\xi}} E_{\theta}\sqbk{\reg\crbk{\hat\pi_n;\cM_{\theta}^-}}
\ge
\frac{(d_\xi^-)^p}{4}=
\frac14\xi^{p/(p-q)}
=
2^{-2-2p/(p-q)}n^{-p/(2(p-q))}.
\]

Therefore, for every $\hat\pi_n\in\mrm{PLA}_n$,
\[
\max_{\theta\in\Theta}
E_{\theta}\sqbk{\reg\crbk{\hat\pi_n;\cM_{\theta}^-}}
\ge
2^{-2-2p/(p-q)}n^{-p/(2(p-q))}.
\]
Taking the infimum over $\hat\pi_n\in\mrm{PLA}_n$ yields \eqref{eq:lb_growth}.
\end{proof}

\subsection{Proof of Lemma \ref{lemma:sign_power_holder}}\label{section:proof:lemma:sign_power_holder}
\begin{proof}
We split into two cases.

If $a$ and $b$ have the same sign, then
\[
\abs{\sgn(a)|a|^q-\sgn(b)|b|^q}
=
\abs{|a|^q-|b|^q}.
\]
Since $q\in(0,1]$, the map $t\ra t^q$ is $q$-H\"older with coefficient $q \le 1$ on $[0,\infty)$, so
\[
\bigl||a|^q-|b|^q\bigr|\le \bigl||a|-|b|\bigr|^q\le |a-b|^q.
\]

Now suppose $a$ and $b$ have opposite signs. Then
\[
\abs{\sgn(a)|a|^q-\sgn(b)|b|^q}
=
|a|^q+|b|^q.
\]
Because $t\ra t^q$ is concave on $[0,\infty)$,
\[
|a|^q+|b|^q\le 2^{1-q}(|a|+|b|)^q.
\]
Note that when $a$ and $b$ have opposite signs, $|a|+|b|=|a-b|$, hence
\[
\abs{\sgn(a)|a|^q-\sgn(b)|b|^q}
\le 2^{1-q}|a-b|^q.
\]

Combining the two cases proves the claim.
\end{proof}

\section{Proofs for Section \ref{section:suff_cond}}

\subsection{Proof of Theorem \ref{thm:holder_reg_of_V_Q}}\label{section:proof:thm:holder_reg_of_V_Q}
\begin{proof}
For $v\in C_b(\X)$, define the Bellman operator
\[
\cT v(x)
:=
\max_{a\in\A}
E[r(x,a,W)+\gamma v(f(x,a,W))],
\qquad x\in\X.
\] We first show that under Assumption \ref{assump:v_q_holder}, $V^*$ is the unique fixed-point of $\cT$ in $C_b(\X)$. 

By \eqref{eqn:r_weak_holder_z}, the map $z\ra E[r(z,W)]$ is continuous on $\Z$. Also, if $z_n\to z$ in $\Z$, then \eqref{eqn:f_weak_lip_z} gives
\[
E|f(z_n,W)-f(z,W)|\to 0,
\]
so $f(z_n,W)\to f(z,W)$ in probability. Hence, for every $h\in C_b(\X)$, since $h$ is bounded and continuous, $E[h(f(z_n,W))]\to E[h(f(z,W))].$
Thus, the induced transition kernel is weakly continuous. 

Therefore, by standard dynamic programming results (see \citet[Chapter 2]{hernandez2012adaptive}, the operator $\cT$ is a $\gamma$-contraction on $C_b(\X)$, and its unique fixed point is $V^*\in C_b(\X)$.

To proceed, we first show a simple inequality. Let $s,t\ge 0$ and $\theta\in[0,1]$. Then
\begin{equation}\label{eqn:pwr_bd_min}
s\wedge t \le s^\theta t^{1-\theta}.
\end{equation} 
Indeed, if $s\le t$, then $s^\theta t^{1-\theta}\ge s^\theta s^{1-\theta}=s=s\wedge t.$  If $t\le s$, then $s^\theta t^{1-\theta}\ge t^\theta t^{1-\theta}=t=s\wedge t.$ Thus in either case, $s\wedge t \le s^\theta t^{1-\theta}.$

With this result, we analyze the H\"older modulus of $\cT v$ for $v\in C_b(\X)$ with $[v]_{\alpha,\X}<\infty$. Set $z:=(x,a)\in\Z $ and $z':=(x',a)\in\Z.$ Then $|z-z'|=|x-x'|$. By \eqref{eqn:r_weak_holder_z},
\[
\abs{E[r(z,W)- r(z',W)]}
\le
E\sqbk{\abs{r(z,W)-r(z',W)}}
\le
\ell_r |x-x'|^{\alpha_r}.
\]
Also, by (i) in Assumption \ref{assump:v_q_holder}, $\abs{E[r(z,W)]-E[r(z',W)]}
\le 2r_\vee.$
Therefore,
\begin{equation}\label{eqn:tu_rwd_holder_const}
\begin{aligned}
\abs{E[r(z,W)- r(z',W)]}
&\le
\crbk{\ell_r |x-x'|^{\alpha_r}}\wedge (2r_\vee)\\
&\le
\ell_r^{\alpha/\alpha_r}(2r_\vee)^{1-\alpha/\alpha_r}|x-x'|^\alpha\\
&= (1-\gamma \ell_f^\alpha)\ell_\alpha|x-x'|^\alpha,
\end{aligned}  
\end{equation}
where we applied \eqref{eqn:pwr_bd_min} with $s=\ell_r |x-x'|^{\alpha_r}$, $t=2r_\vee$, and $\theta=\alpha/\alpha_r$. Moreover, since $\alpha\le 1$, Jensen's inequality and \eqref{eqn:f_weak_lip_z} imply
\begin{equation}\label{eqn:tu_vf_holder_const}
\begin{aligned}
E\sqbk{\abs{v(f(z,W))-v(f(z',W))}}
&\le
[v]_{\alpha,\X}
E\sqbk{\abs{f(z,W)-f(z',W)}^\alpha}\\
&\le
[v]_{\alpha,\X}
\crbk{E\sqbk{\abs{f(z,W)-f(z',W)}}}^\alpha\\
&\le
[v]_{\alpha,\X}\ell_f^\alpha |x-x'|^\alpha.
\end{aligned}
\end{equation}
Hence, combining \eqref{eqn:tu_rwd_holder_const} and \eqref{eqn:tu_vf_holder_const} yields
\[
\begin{aligned}
&\abs{
E[r(z,W)+\gamma v(f(z,W))]
-
E[r(z',W)+\gamma v(f(z',W))]
}\\
&\qquad\le
\crbk{(1-\gamma \ell_f^\alpha)\ell_\alpha+\gamma \ell_f^\alpha [v]_{\alpha,\X}}|x-x'|^\alpha.
\end{aligned}
\]

Taking the maximum over $a\in\A$ on both sides and that $|\sup_a h_1(a) - \sup_{a}h_2(a)|\leq \sup_a|h_1(a)-h_2(a)|$ for any functions $h_1,h_2$, we obtain
\[
|\cT v(x) - \cT v(x')|\leq \crbk{(1-\gamma \ell_f^\alpha)\ell_\alpha+\gamma \ell_f^\alpha [v]_{\alpha,\X}}|x-x'|^\alpha.
\]
This implies that 
\[
[\cT v]_{\alpha,\X}
\le
(1-\gamma \ell_f^\alpha)\ell_\alpha+\gamma \ell_f^\alpha [v]_{\alpha,\X}.
\]

To show the H\"older continuity of the fixed point $V^*$, we consider the Picard iteration approximation. Specifically, define $v_0\equiv 0$ and $v_{k+1}:=\cT v_k$ for $k\ge 0$. Since $[v_0]_{\alpha,\X}=0$, the preceding inequality implies by induction that $[v_k]_{\alpha,\X}\le \ell_\alpha,$ for all $k\ge 0.$

Because $\cT$ is a contraction on $C_b(\X)$, $v_k\to V^*$ uniformly. Hence, for all $x,x'\in\X$,
\[
|V^*(x)-V^*(x')|
=
\lim_{k\to\infty}|v_k(x)-v_k(x')|
\le
\ell_\alpha |x-x'|^\alpha,
\]
and therefore $[V^*]_{\alpha,\X}\le \ell_\alpha$.

Next, by the definition of $Q^*$ in \eqref{eqn:def_Qstar},
\[
Q^*(z)
=
E[r(z,W)+\gamma V^*(f(z,W))],
\qquad z\in\Z.
\]
Let $z,z'\in\Z$. Using the H\"older modulus of $V^*$ and applying the same argument as in \eqref{eqn:tu_rwd_holder_const}, we have \[
\begin{aligned}
|Q^*(z)-Q^*(z')|
&\le
\abs{E[r(z,W)- r(z',W)]}+\gamma E\sqbk{\abs{V^*(f(z,W))-V^*(f(z',W))}}\\
&\le
(1-\gamma \ell_f^\alpha)\ell_\alpha |z-z'|^\alpha
+
\gamma \ell_f^\alpha [V^*]_{\alpha,\X}|z-z'|^\alpha\\
&\le
\ell_\alpha |z-z'|^\alpha.
\end{aligned}
\]
Thus, $[Q^*]_{\alpha,\Z}\le \ell_\alpha.$ Moreover,
\[
|Q^*(z)|
\le
\abs{E[r(z,W)]}+\gamma \|V^*\|
\le
r_\vee+\gamma \|V^*\|,
\qquad z\in\Z,
\]
so $Q^*\in C_b(\Z)$.

Finally, since $\cT V^* = V^*\in C_b(\X)$ and $Q^*$ is defined by \eqref{eqn:def_Qstar}, we have that 
\[
\max_{a\in\A}Q^*(x,a) = \max_{a\in\A} E[r(x,a,W)+\gamma V^*(f(x,a,W))] = \cT V^*(x) = V^*(x)
\]
for all $x\in\X$. This completes the proof. 
\end{proof}

\subsection{Proof of Proposition \ref{prop:empirical_holder_class}}\label{section:proof:prop:empirical_holder_class}

\begin{proof}
Fix an integer $k\ge2$. By enlarging the underlying probability space if necessary, define an i.i.d. Rademacher sequence $\{\epsilon_i:i=1,\ds,n\}$ on $(\Omega,\cF,P)$ independent of $\{W_i:i=1,\ds,n\}$. Let $\Theta_0\subset\Theta$ be a countable dense set and set $\cG:=\sigma(W_1,\ds,W_n)$.

Since $\theta\to g(\theta,w)$ is $\eta$-H\"older for every fixed $w$, and since
\[
|Eg(\theta,W)-Eg(\theta',W)|
\le E\sqbk{L(W)}\,|\theta-\theta'|^\eta,
\]
the map $\theta\to (E_n-E)g(\theta,W)$ is continuous on $\Theta$. Hence
\[
Z_n=\frac1n\sup_{\theta\in\Theta_0}\abs{\sum_{i=1}^n\crbk{g(\theta,W_i)-Eg(\theta,W)}}.
\]
Therefore, by standard symmetrization theorem for empirical processes (see Theorem 3.1.21 in \citet{gine2021mathematical}),
\begin{equation}\label{eqn:emp_holder_symm}
E\sqbk{Z_n^k}
\le
\frac{2^k}{n^k}
E\sqbk{
E\sqbkcond{
\sup_{\theta\in\Theta_0}\abs{\sum_{i=1}^n \epsilon_i g(\theta,W_i)}^k
}{\cG}
}.
\end{equation}
We proceed to bound the conditional expectation within the r.h.s of \eqref{eqn:emp_holder_symm}. 

We starting with the first moment $k=1$. Define $R_{n,2}:=E_n\sqbk{R(W)^2}^{1/2}$, $L_{n,2}:=E_n\sqbk{L(W)^2}^{1/2}$. We consider the symmetrized process
\[
X_\theta:=\frac1{\sqrt n}\sum_{i=1}^n \epsilon_i g(\theta,W_i).
\] We proceed to split and bound 
\begin{equation}\label{eqn:tu_X_split}
E\sqbkcond{\sup_{\theta\in\Theta_0}\abs{X_\theta}}{\cG} \leq E\sqbkcond{\abs{X_{\theta_0}}}{\cG} + E\sqbkcond{\sup_{\theta\in\Theta_0}\abs{X_\theta-X_{\theta_0}}}{\cG}
\end{equation}
where $\theta_0\in\Theta_0$ is an arbitrary point. 

For the first term in \eqref{eqn:tu_X_split}, we observe that
$$E\sqbkcond{\abs{X_{\theta_0}}^2}{\cG}
=
E_n\sqbk{g(\theta_0,W)^2}\le R_{n,2}^2.$$
Therefore, we have $E\sqbkcond{\abs{X_{\theta_0}}}{\cG}\le R_{n,2}$. 

For the second term in \eqref{eqn:tu_X_split}, we use sub-Gaussianity of the Rademacher process. Specifically, under measure $E[\cd|\cG]$, the process $\{X_\theta:\theta\in\Theta_0\}$ is sub-Gaussian with respect to $d_n$ where
\[
d_n(\theta,\theta')
:=
E_n\sqbk{\abs{g(\theta,W)-g(\theta',W)}^2}^{1/2}
\] for $\theta,\theta'\in\Theta_0$.  This is because for every $\lambda\in\R$,
\[
\begin{aligned}
E\sqbkcond{\exp\crbk{\lambda(X_\theta-X_{\theta'})}}{\cG}
&=
\prod_{i=1}^n
E\sqbkcond{
\exp\crbk{
\frac{\lambda\epsilon_i}{\sqrt n}\crbk{g(\theta,W_i)-g(\theta',W_i)}
}
}{\cG}\\
&\le
\exp\crbk{\frac{\lambda^2}{2}d_n(\theta,\theta')^2},
\end{aligned}
\]
where we used $E_\epsilon[e^{u\epsilon}]\le e^{u^2/2}$. We note that, if we write
$d_\eta(\theta,\theta'):=|\theta-\theta'|^\eta$, then
\begin{equation}\label{eqn:emp_holder_dn}
d_n(\theta,\theta')\le L_{n,2}\,d_\eta(\theta,\theta')
\end{equation} for all $\theta,\theta'\in\Theta_0$.

Next, we apply Dudley's entropy integral bound for sub-Gaussian processes (see for example \citep[Theorem~5.22]{wainwright2019HD}) to the process
$\{X_\theta-X_{\theta_0}:\theta\in\Theta_0\}$ and $\{-(X_\theta-X_{\theta_0}):\theta\in\Theta_0\}$, yielding
\[
E\sqbkcond{\sup_{\theta\in\Theta_0}\abs{X_\theta-X_{\theta_0}}}{\cG}
\le
64\int_0^{D_n}\sqrt{\log N(u,\Theta_0,d_n)}\,du,
\]
where $D_n:=\diam(\Theta_0,d_n)$. Let
$D_\eta:=\diam(\Theta,d_\eta)$, then \eqref{eqn:emp_holder_dn} implies
$D_n\le L_{n,2}D_\eta$, and hence
\begin{align*}
\int_0^{D_n}\sqrt{\log N(u,\Theta_0,d_n)}\,du
&\stackrel{(i)}\le
\int_0^{L_{n,2}D_\eta}
\sqrt{\log N\crbk{\frac{u}{L_{n,2}},\Theta,d_\eta}}\,du \\
&\stackrel{(ii)}=
L_{n,2}\int_0^{D_\eta}\sqrt{\log N(v,\Theta,d_\eta)}\,dv \\
&=: C_{\Theta,\eta}L_{n,2}.
\end{align*}
Here, $(i)$ uses \eqref{eqn:emp_holder_dn} together with the bound
$D_n\le L_{n,2}D_\eta$, $(ii)$ is the change of variables
$v=u/L_{n,2}$, and the last line defines a finite constant that depends only on $(\Theta,\eta)$ because
$\Theta\subset\R^d$ is compact.

Combining the preceding bounds and \eqref{eqn:tu_X_split}, we obtain
\begin{equation}\label{eqn:emp_holder_first_moment}
E\sqbkcond{\sup_{\theta\in\Theta_0}\abs{\sum_{i=1}^n \epsilon_i g(\theta,W_i)} }{\cG}= \sqrt{n}E\sqbkcond{\sup_{\theta\in\Theta_0}\abs{X_\theta}}{\cG}
\le
\sqrt{n}\crbk{R_{n,2}+C_{\Theta,\eta}L_{n,2}}.
\end{equation}

To generalize this to integers $k\ge2$, we use the Khinchin-Kahane inequality. Specifically, consider the following countable set
\[
T_n:=
\Bigl\{
\crbk{g(\theta,W_1),\ds,g(\theta,W_n)}:\theta\in\Theta_0
\Bigr\}
\cup
\Bigl\{
-\crbk{g(\theta,W_1),\ds,g(\theta,W_n)}:\theta\in\Theta_0
\Bigr\}
\subset\R^n.
\]
Then
\[
\sup_{\theta\in\Theta_0}\abs{\sum_{i=1}^n \epsilon_i g(\theta,W_i)}
=
\sup_{t\in T_n}\sum_{i=1}^n t_i\epsilon_i.
\]
Therefore, Proposition~3.2.8 of \citet{gine2021mathematical} with $p=k$ and $q=1$ gives
\[
E\sqbkcond{
\sup_{\theta\in\Theta_0}\abs{\sum_{i=1}^n \epsilon_i g(\theta,W_i)}^k
}{\cG}^{1/k}
\le
C_k\,
E\sqbkcond{
\sup_{\theta\in\Theta_0}\abs{\sum_{i=1}^n \epsilon_i g(\theta,W_i)}
}{\cG},
\]
where $C_k<\infty$ depends only on $k$. Thus, it follows from \eqref{eqn:emp_holder_first_moment} that
\[
E\sqbkcond{
\sup_{\theta\in\Theta_0}\abs{\sum_{i=1}^n \epsilon_i g(\theta,W_i)}^k
}{\cG}
\le
C_k^k n^{k/2}\crbk{R_{n,2}+C_{\Theta,\eta}L_{n,2}}^k.
\]
Substituting this estimate into \eqref{eqn:emp_holder_symm}, we conclude that for all $k\geq 2$
\begin{equation}\label{eqn:tu_EZnk_temp}
E\sqbk{Z_n^k}
\le
\frac{C_k'}{n^{k/2}}
E\sqbk{\crbk{R_{n,2}+C_{\Theta,\eta}L_{n,2}}^k}
\le
\frac{C_k''}{n^{k/2}}
\crbk{
E\sqbk{R_{n,2}^k}+E\sqbk{L_{n,2}^k}
}
\end{equation}
for some $C''_k < \infty$ depending on $k$ and $C_{\Theta,\eta}$, where we used $(a+b)^k\le 2^{k-1}(a^k+b^k)$.

Finally, to finish the proof, we observe that since $k/2\ge1$, Jensen's inequality yields
\[
R_{n,2}^k
=
\crbk{E_n\sqbk{R(W)^2}}^{k/2}
\le
E_n\sqbk{R(W)^k}
\quad\text{and}\quad
L_{n,2}^k
=
\crbk{E_n\sqbk{L(W)^2}}^{k/2}
\le
E_n\sqbk{L(W)^k}.
\]
Taking expectations, we obtain
\[
E\sqbk{R_{n,2}^k}\le E\sqbk{R(W)^k}\quad\text{and}\quad
E\sqbk{L_{n,2}^k}\le E\sqbk{L(W)^k}.
\]
Combining this with \eqref{eqn:tu_EZnk_temp} completes the proof of Proposition~\ref{prop:empirical_holder_class}.
\end{proof}

\subsection{Proof of Theorem \ref{thm:split_sample_est}}\label{section:proof:thm:split_sample_est}

\begin{proof}
We divide the proof into two parts. First, we establish the H\"older continuity statements for $Q^*$ and $\widehat Q_n$, which in turn verify the properties in Assumption~\ref{assump:regularity}. We then use these H\"older continuity together with Proposition~\ref{prop:empirical_holder_class} to show that the envelope moment bounds in Assumption~\ref{assump:envelop_moment_bounds} are satisfied.
\paragraph{H\"older continuity.}  Fix $\alpha\in(0,\alpha_r]$ such that $\gamma\ell_f^\alpha<1$, and let $z_0$ be the reference point from Assumption \ref{assump:suff_cond_A1A2} part (iii). Define
$$R(w):=|r(z_0,w)|+\diam(\Z)^{\alpha_r}L_r(w),$$ and let $r_\vee:=E[R(W)]$ and $
\ell_r:=E[L_r(W)]$.
Since $\Z$ is compact, by Assumption \ref{assump:suff_cond_A1A2} (iii) and (iv), $R(W)$ has moments of every order. Moreover, note that
\[
\begin{aligned}
|r(z,w)|&\le |r(z_0,w)|+|r(z,w)-r(z_0,w)|\\
&\leq|r(z_0,w)| + L_r(w)|z-z_0|^{\alpha_r}\\
&\le R(w)   
\end{aligned}
\]
for all $ z\in\Z$ and $\ w\in\W.$ 

Next, as in Theorem \ref{thm:holder_reg_of_V_Q}, we set
\[
\ell_\alpha:=\frac{\ell_r^{\alpha/\alpha_r}(2r_\vee)^{1-\alpha/\alpha_r}}{1-\gamma\ell_f^\alpha}.
\]
For $\mu\in\{\psi,\psi_n^V,\psi_n^Q\}$, write $E_\mu\phi(W):=\int_\W \phi(w)\mu(dw)$. The preceding bound together with Assumption \ref{assump:suff_cond_A1A2} (iv) and (v) yields
\[
\begin{aligned}
E_\mu|r(z,W)|&\le E_\mu R(W),\\
E_\mu|r(z,W)-r(z',W)|&\le E_\mu L_r(W)\,|z-z'|^{\alpha_r},\\
E_\mu|f(z,W)-f(z',W)|&\le \ell_f|z-z'|.
\end{aligned}
\]
Therefore, Theorem \ref{thm:holder_reg_of_V_Q} applies to the case $\mu = \psi$, while Corollary \ref{cor:Q_Qn_regular} applies to measures $\psi_n^V(\omega,\cd)$ and $\psi_n^Q(\omega,\cd)$. In particular, $Q^*$ is $\alpha$-H\"older on $\Z$, $[V^*]_{\alpha,\X}\le \ell_\alpha$, and
\begin{equation}\label{eqn:tu_hatV_holder_modulus}
\sqbk{\widehat V_n}_{\alpha,\X}\le \widehat L_{\alpha,n}^V
:=
\frac{\crbk{E_n^V L_r(W)}^{\alpha/\alpha_r}\crbk{2E_n^V R(W)}^{1-\alpha/\alpha_r}}{1-\gamma\ell_f^\alpha}.
\end{equation}

To show the H\"older continuity of $\widehat Q_n$, we start from the bounds
\[
|r(z,w)-r(z',w)|\le L_r(w)|z-z'|^{\alpha_r}
\quad\text{and}\quad
|r(z,w)-r(z',w)|\le 2R(w). 
\]
So, inequality \eqref{eqn:pwr_bd_min} with $\theta=\alpha/\alpha_r$, implies 
\begin{equation}\label{eqn:tu_r_holder}
|r(z,w)-r(z',w)|\le \kappa_r(w)|z-z'|^\alpha
\end{equation}
for all $z,z'\in\Z$, where
\[
\kappa_r(w):=L_r(w)^{\alpha/\alpha_r}(2R(w))^{1-\alpha/\alpha_r}.
\]

Using \eqref{eqn:def_hatQn}, the bound on $[\widehat V_n]_{\alpha,\X}$, and Assumption \ref{assump:suff_cond_A1A2} (v), we obtain for all $z,z'\in\Z$,
\[
\begin{aligned}
|\widehat Q_n(z)-\widehat Q_n(z')|
&\le
E_n^Q|r(z,W)-r(z',W)| +
\gamma E_n^Q\abs{\widehat V_n(f(z,W))-\widehat V_n(f(z',W))} \\
&\le
\crbk{E_n^Q\kappa_r(W)+\gamma \ell_f^\alpha \widehat L_{\alpha,n}}|z-z'|^\alpha.
\end{aligned}
\]
Thus $\widehat Q_n(\omega,\cd)$ is $\alpha$-H\"older on $\Z$ for every $\omega$ and the first claim in Theorem \ref{thm:split_sample_est} follows. In particular, Assumption \ref{assump:regularity} holds.

\paragraph{Envelopes and moment bounds.} We now verify Assumption \ref{assump:envelop_moment_bounds}. We fix $ q\in[0,\bar q]$ and $q<\alpha^*$. Then, by the definition of $\alpha^*$, there exists $\alpha\in( q,\alpha_r]$ such that $\gamma\ell_f^\alpha<1$. Since $R(W)$ and $L_r(W)$ have moments of every order, Jensen's inequality gives
$$E\sqbk{\crbk{E_n^V L_r(W)}^p}\leq EE_n^V [L_r(W)^p] = E[L_r(W)^p]$$ and similarly $E\sqbk{\crbk{E_n^V R(W)}^p} \leq E[R(W)^p]$
for all $p\ge 1$. Therefore, for every integer $k\ge1$,
\begin{equation}\label{eqn:tu_Ln_moment_bd}
\sup_{n\ge1}E\sqbk{\crbk{\widehat L_{\alpha,n}^V}^k}=:L_k<\infty.
\end{equation}

Next, define
\[
g(z,w):=r(z,w)+\gamma V^*(f(z,w)).
\]
Then $|g(z,w)|\le R(w)+\gamma\|V^*\|_\infty$. Also, since $[V^*]_{\alpha,\X}\le \ell_\alpha$ and \eqref{eqn:tu_r_holder} holds,
\begin{align*}
|g(z,w)-g(z',w)|
&\le \kappa_r(w)|z-z'|^\alpha + \gamma \ell_\alpha |f(z,w)-f(z',w)|^\alpha \\
&\leq
\crbk{\kappa_r(w)+\gamma\ell_\alpha\ell_f^\alpha}|z-z'|^\alpha   
\end{align*}
for all $z,z'\in\Z$. Hence with $$Z_n^U:=\sup_{z\in\Z}|(E_n^U-E)g(z,W)|$$ where $U = Q$ or $V$, 
Proposition \ref{prop:empirical_holder_class} applies, yielding
\begin{equation}\label{eqn:tu_Zk_moment_bd}
E\sqbk{\crbk{Z_n^U}^k}\le \frac{C_k}{n^{k/2}},
\end{equation}
for all integer $k\ge2$ and $U = Q$ or $V$.

With this moment bound, we verify the first part of Assumption \ref{assump:envelop_moment_bounds}. From the definition of $\widehat Q_n$,
\begin{equation}\label{eqn:sample_split_q_decomp}
\begin{aligned}
\sup_{z\in \Z}\abs{\widehat Q_n(z)-Q^*(z)}
&=
\sup_{z\in \Z}\abs{(E_n^Q-E)g(z,W)+\gamma E_n^Q\sqbk{(\widehat V_n-V^*)(f(z,W))}}\\
&\leq Z_n^Q + \gamma\sup_{z\in\Z}\abs{E_n^Q\sqbk{(\widehat V_n-V^*)(f(z,W))}}\\
&\leq Z_n^Q + \gamma \norm{\widehat V_n-V^*}
\end{aligned}
\end{equation}
Then, to achieve the first envelope moment bounds in Assumption \ref{assump:envelop_moment_bounds}, it remains to bound the moments of $\norm{\widehat V_n-V^*}$. By the Bellman equation for $V^*$ and the definition of $\widehat V_n$ in \eqref{eqn:def_hatVn}, we see that
$$\begin{aligned}\norm{\widehat V_n-V^*}&\leq \sup_{z\in\Z}\abs{(E_n^V -E)r(z,W) + \gamma E_n^V\sqbk{\widehat V_n(f(z,W))} -  \gamma E\sqbk{V^*(f(z,W))}}\\
& \leq \sup_{z\in\Z} \abs{(E_n^V -E)g(z,W)} + \gamma\sup_{z\in\Z} \abs{E_n^V\sqbk{\widehat V_n(f(z,W)) - V^*(f(z,W))}}\\
&\leq Z_n^V + \gamma \norm{\widehat V_n-V^*}
\end{aligned}$$
Therefore, rearranging the inequality, we get
\begin{equation}\label{eqn:sample_split_delta}
\norm{\widehat V_n-V^*}
\le \frac{Z_n^V}{1-\gamma}.
\end{equation}
From \eqref{eqn:sample_split_q_decomp} and \eqref{eqn:sample_split_delta}, we see that
\[
\sup_{z\in\Z}|\widehat Q_n(z)-Q^*(z)|
\le \delta_n:= Z_n ^Q+ \frac{\gamma Z_n^V}{1-\gamma} + n^{-1}
\]
with measurable $\delta_n > 0$, where the measurability follows from the continuity of $g$ and the compactness of $\Z$. Moreover, applying \eqref{eqn:tu_Zk_moment_bd} with $k=2$,  we conclude that $E\sqbk{\delta_n^2}\le C/n$ for some $C < \infty$.

Next, we verify the divided-difference bound in Assumption \ref{assump:envelop_moment_bounds}. Define
\[
G_n^V(z,w):=r(z,w)+\gamma\widehat V_n(f(z,w)),
\qquad
B_n^V(w):=\kappa_r(w)+\gamma\ell_f^\alpha\widehat L_{\alpha,n}^V.
\]
Then, by \eqref{eqn:tu_r_holder}, \eqref{eqn:tu_hatV_holder_modulus}, and the Lipschitz property of $f$,
\begin{equation}\label{eqn:sample_split_Gn_holder}
|G_n^V(z,w)-G_n^V(z',w)|\le B_n^V(w)|z-z'|^\alpha
\end{equation}
for all $z,z'\in\Z$ and $ w\in\W$. Moreover, notice that
\[
\widehat Q_n(z)=E_n^Q\sqbk{G_n^V(z,W)}\quad\text{and}\quad
Q^*(z)=E\sqbk{G_n^V(z,W)}-\gamma\int_\X (\widehat V_n-V^*)(y)\,P(dy|z).
\]

We subtract the identities and divide by $|a-b|^{ q}$ to get
\begin{equation}\label{eqn:sample_split_divdiff}
\begin{aligned}
&\sup_{x\in\X,a\neq b}\frac{\abs{\crbk{\widehat Q_n(x,a)-Q^*(x,a)}-\crbk{\widehat Q_n(x,b)-Q^*(x,b)}}}{|a-b|^{q}}\\
& \le
\underbrace{\sup_{x\in\X,a\neq b }
\abs{(E_n^Q-E)\sqbk{\frac{G_n^V(x,a,W)-G_n^V(x,b,W)}{|a-b|^{q}}}}}_{=:Y_n}\\
&\quad + 
\underbrace{\gamma\sup_{x\in\X,a\neq b}
\frac{\abs{\int_\X (\widehat V_n-V^*)(y)\,\crbk{P(dy|x,a)-P(dy|x,b)}}}{|a-b|^{q}}}_{=:U_n^V},
\end{aligned}
\end{equation}

We bound the two terms separately. We control $U_n^V$ using by Assumption \ref{assump:suff_cond_A1A2} (ii). Specifically, since $\bar q\geq q$ and $\A$ is compact,
\begin{equation}\label{eqn:sample_split_transition_term}
\begin{aligned}
U^V_n&\leq \gamma\norm{\widehat V_n - V^*}\sup_{x\in\X,a\neq b}
\frac{d_{\mrm{TV}}(P(\cd|x,a),P(\cd|x,b))}{|a-b|^{ q}}\\
&\le
2\gamma \ell_P\norm{\widehat V_n - V^*} \sup_{a\neq b}|a-b|^{\bar q - q}\\
&=2\gamma \ell_P\norm{\widehat V_n - V^*} \diam(\A)^{\bar q -q}
\end{aligned}
\end{equation}

To bound $Y_n$, let $\Theta:=\X\times\A\times\A$. For $\theta=(x,a,b)\in\Theta$, define
\[
H_n^V(\theta,w):=
\begin{cases}
\dfrac{G_n^V(x,a,w)-G_n^V(x,b,w)}{|a-b|^{q}}, & a\ne b,\\[1ex]
0, & a=b.
\end{cases}
\]
We claim that the following lemma holds for $H_n^V$. The proof of Lemma \ref{lemma:H_holder_bd} is deferred to Appendix \ref{section:proof:lemma:H_holder_bd}.

\begin{lemma}\label{lemma:H_holder_bd}
There exists a constant $C_H<\infty$, depending only on $(\alpha, q,\A)$, such that
\begin{equation}\label{eqn:tu_H_holder}
|H_n^V(\theta,w)-H_n^V(\theta',w)|\leq
C_H B_n^V(w)|\theta-\theta'|^{\alpha- q},
\end{equation}
for all $\theta,\theta'\in\Theta$, and $w\in\W$.
\end{lemma}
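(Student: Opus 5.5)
The plan is a direct divided-difference argument built on the pointwise H\"older bound \eqref{eqn:sample_split_Gn_holder}. Fix $w\in\W$ and suppress it, writing $G=G_n^V(\cd,w)$ and $B=B_n^V(w)$. Set $\Delta(\theta):=G(x,a)-G(x,b)$ for $\theta=(x,a,b)$, so that $H_n^V(\theta)=\Delta(\theta)/|a-b|^q$ when $a\ne b$.

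By \eqref{eqn:sample_split_Gn_holder} applied with the same $x$, we have $|\Delta(\theta)|\le B|a-b|^\alpha$. Hence $|H_n^V(\theta)|\le B|a-b|^{\alpha-q}$, and this also holds trivially when $a=b$. Recall that $\alpha>q$ by the choice of $\alpha\in(q,\alpha_r]$, so the exponent $\alpha-q$ is positive. The case $q=0$ is immediate from \eqref{eqn:sample_split_Gn_holder}, so assume $q>0$.

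Take $\theta=(x,a,b)$ and $\theta'=(x',a',b')$, and set $\rho:=|\theta-\theta'|$, $s:=|a-b|$, $s':=|a'-b'|$. Then $|(x,a)-(x',a')|\le\rho$, $|(x,b)-(x',b')|\le\rho$, and $|s-s'|\le|a-a'|+|b-b'|\le 2\rho$. Without loss of generality $s\le s'$. I split into two cases according to whether the pair is near-diagonal relative to $\rho$.

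\emph{Case $s\le 2\rho$.} Then $s'\le s+2\rho\le 4\rho$. The crude bound above gives
\[
|H_n^V(\theta)-H_n^V(\theta')|\le B\bigl(s^{\alpha-q}+s'^{\alpha-q}\bigr)\le 2\cdot 4^{\alpha-q}B\rho^{\alpha-q}.
\]
This case also covers $a=b$ or $a'=b'$.

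\emph{Case $s>2\rho$.} Both $s,s'>0$, and $s'\le s+2\rho\le 2s$. I decompose
\[
H_n^V(\theta)-H_n^V(\theta')=\frac{\Delta(\theta)-\Delta(\theta')}{s^q}+\Delta(\theta')\bigl(s^{-q}-s'^{-q}\bigr).
\]
For the first term, two applications of \eqref{eqn:sample_split_Gn_holder} give $|\Delta(\theta)-\Delta(\theta')|\le 2B\rho^\alpha$. Since $s^{-q}<(2\rho)^{-q}$, the first term is at most $2^{1-q}B\rho^{\alpha-q}$.

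For the second term, use $|\Delta(\theta')|\le Bs'^\alpha\le 2^\alpha Bs^\alpha$. The mean value theorem on $t\mapsto t^{-q}$ over $[s,s']$ gives $|s^{-q}-s'^{-q}|\le q s^{-q-1}|s-s'|\le 2q\rho s^{-q-1}$. So the second term is at most $2^{\alpha+1}qB\rho\, s^{\alpha-q-1}$. Since $\alpha-q-1<0$ and $s>2\rho$, we have $s^{\alpha-q-1}\le(2\rho)^{\alpha-q-1}$, and the second term is at most $2^{2\alpha-q}qB\rho^{\alpha-q}$.

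Collecting the two cases yields \eqref{eqn:tu_H_holder}, with $C_H$ depending only on $(\alpha,q)$, which is consistent with the stated dependence on $(\alpha,q,\A)$.

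The main point requiring care is the second case. Near the diagonal the normalization $|a-b|^{-q}$ blows up, so a naive bound fails. This is handled by switching to the crude size bound whenever $|a-b|\lesssim|\theta-\theta'|$, and by using the MVT estimate with the negative exponent $\alpha-q-1$ otherwise. One must also check that $\rho$ is comparable to $s$ only where the crude bound is used.
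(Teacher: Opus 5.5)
Your proof is correct and follows essentially the same route as the paper: the same split of $H_n^V(\theta,w)-H_n^V(\theta',w)$ into a numerator-difference term normalized by one gap plus a normalization-difference term, with the crude bound $|H_n^V(\theta,w)|\le B_n^V(w)|a-b|^{\alpha-q}$ handling the near-diagonal regime. The only difference is in execution. The paper orders the gaps the other way and controls the normalization term through the scale-free inequality $\lambda^{\alpha-q}-\lambda^{\alpha}\le C(1-\lambda)^{\alpha-q}$ with $\lambda=t/s$. You instead split at $|a-b|\approx|\theta-\theta'|$ and apply the mean value theorem, which is equally valid and gives a constant depending only on $(\alpha,q)$.
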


Now we condition on $\mathcal F_n^V:=\sigma(W_1^V,\dots,W_n^V)$. Then, $H_n^V$ and $B_n^V$ are deterministic. Note that, by Lemma \ref{lemma:H_holder_bd}, $\theta\ra H_n^V(\theta,w)$ is H\"older continuous in the sense of Proposition \ref{prop:empirical_holder_class}. Moreover, by \eqref{eqn:sample_split_Gn_holder} and Lemma \ref{lemma:H_holder_bd} both $|H_n^V(\cd,w)|$ and the H\"older coefficient $\sqbk{H_n^V(\cd,w)}_{\alpha- q,\Theta}$ are bounded by $(\diam(\A)^{\alpha- q}\vee C_H)B_n^V(w)$. Since
\[
B_n^V(w):= \kappa_r(w)+\gamma\ell_f^\alpha\widehat L_{\alpha,n}^V,
\]
and $\kappa_r(W)$ has moments of every order, we have that for all integers $k\ge2$,
\[
E\sqbkcond{B_n^V(W)^k}{\mathcal F_n^V}\le C_k\crbk{1+\widehat L_{\alpha,n}^k},
\]
for some $C_k$ that only depend on $(k,\A,\alpha, q,\ell_\alpha,\gamma,E[\kappa_r(W)^k])$.

Hence, under $E[\cd|\mathcal F_n^V]$, Proposition \ref{prop:empirical_holder_class} applies to the class $\{H_n^V(\theta,\cdot):\theta\in\Theta\}$ on the compact parameter space $\Theta$. 
Therefore, we conclude that for every integer $k\ge2$,
\[
E\sqbkcond{Y_n^k}{\mathcal F_n^V}
\le
\frac{C_k\crbk{1+\widehat L_{\alpha,n}^k}}{n^{k/2}},
\]
Taking expectations and using \eqref{eqn:tu_Ln_moment_bd}, we obtain
\begin{equation}\label{eqn:sample_split_Yn_moment}
E\sqbk{Y_n^k}\le \frac{C_k}{n^{k/2}}
\end{equation}
for some possibly different $C_k$.

Combining \eqref{eqn:sample_split_divdiff}, \eqref{eqn:sample_split_transition_term}, we conclude that
\[
\sup_{x\in\X}\sup_{a\ne b}
\frac{\abs{\crbk{\widehat Q_n(x,a)-Q^*(x,a)}-\crbk{\widehat Q_n(x,b)-Q^*(x,b)}}}{|a-b|^{ q}}
\le
\Lambda_n( q),
\]
where
\[
\Lambda_n( q):=
Y_n+2\gamma\ell_P\diam(\A)^{\bar q-q}\norm{\widehat V_n - V^*}+n^{-1}.
\]
Finally, by \eqref{eqn:sample_split_Yn_moment}, \eqref{eqn:tu_Zk_moment_bd}, and \eqref{eqn:sample_split_delta}, we conclude that for all integers $k\ge2$
\[
E\sqbk{\Lambda_n( q)^k}\le \frac{C_k}{n^{k/2}},
\]
Hence, the second part of Assumption \ref{assump:envelop_moment_bounds} follows.

\end{proof}

\subsubsection{Proof of Lemma \ref{lemma:H_holder_bd}}\label{section:proof:lemma:H_holder_bd}

\begin{proof}
Let $p:=\alpha- q\in(0,1]$. To show \eqref{eqn:tu_H_holder}, fix $w\in\W$ and $\theta=(x,a,b)$, $\theta'=(x',a',b')\in\Theta$. Write
\[
d:=|\theta-\theta'|,\qquad s:=|a-b|,\qquad t:=|a'-b'|,
\]
and assume without loss of generality that $s\ge t$.

If $q=0$, then \eqref{eqn:sample_split_Gn_holder} gives
\[
\begin{aligned}
|H_n^V(\theta,w)-H_n^V(\theta',w)|
&\le |G_n^V(x,a,w)-G_n^V(x',a',w)|+|G_n^V(x,b,w)-G_n^V(x',b',w)| \\
&\le 2B_n^V(w)d^\alpha,
\end{aligned}
\]
which is \eqref{eqn:tu_H_holder} with $C_H = 2$, as $p=\alpha$.

Assume now that $q>0$. By \eqref{eqn:sample_split_Gn_holder},
\begin{equation}\label{eqn:tu_H_pointwise}
|H_n^V(\theta,w)|\le B_n^V(w)s^p,
\qquad
|H_n^V(\theta',w)|\le B_n^V(w)t^p.
\end{equation}
If $t=0$, then $H_n^V(\theta',w)=0$, so \eqref{eqn:tu_H_pointwise} yields
\[
\begin{aligned}
|H_n^V(\theta,w)-H_n^V(\theta',w)|
=
|H_n^V(\theta,w)|
\le B_n^V(w)s^p. 
\end{aligned}
\]
Since $|u|-|v| \leq |u-v|$, we have \begin{equation}\label{eqn:tu_d_ub_st}
   s = s-t\leq |a-a'+b'-b|\leq |a-a'| + |b-b'|\leq 2d. 
\end{equation} So, $$|H_n^V(\theta,w)-H_n^V(\theta',w)| \leq  2^pB_n^V(w)d^p,$$
showing \eqref{eqn:tu_H_holder} with $C_H = 2^p$.

Hence it remains to consider the case $s\ge t>0$ and $q>0$. Set
\[
D:=G_n^V(x,a,w)-G_n^V(x,b,w),
\qquad
D':=G_n^V(x',a',w)-G_n^V(x',b',w).
\]
Then, it is easy to see that
\[
|H_n^V(\theta,w)-H_n^V(\theta',w)|
\le \underbrace{ s^{-q}|D-D'|}_{=:I_1}+\underbrace{|D'|\abs{s^{-q}-t^{-q}}}_{=:I_2}.
\]
We then bound $I_1$ and $I_2$ separately. 

For $I_1$, \eqref{eqn:sample_split_Gn_holder} implies $|D-D'|\le 2B_n^V(w)d^\alpha$, while \eqref{eqn:tu_H_pointwise} implies
\[
|D-D'|\le |D|+|D'|\le B_n^V(w)(s^\alpha+t^\alpha)\le 2B_n^V(w)s^\alpha.
\]
Hence, we have
\begin{equation}\label{eqn:tu_I1_bd}
I_1\le 2B_n^V(w)\min\{d^\alpha s^{-q},\,s^p\}\le 2B_n^V(w)d^p.
\end{equation}
Here, the last inequality is from the following two cases: if $s\le d$, then $s^p\le d^p$, whereas if $s>d$, then $d^\alpha s^{-q}\le d^{\alpha-q}=d^p$.

For $I_2$, using $|D'|\le B_n^V(w)t^\alpha$ and writing $\lambda:=t/s\in(0,1]$, we obtain
\[
I_2\le B_n^V(w)t^\alpha(t^{-q}-s^{-q})
= B_n^V(w)s^p(\lambda^p-\lambda^\alpha).
\]
We claim that for $\lambda\in [0,1]$, \begin{equation}\label{eqn:tu_lambda_bd}
    \lambda^p-\lambda^\alpha\leq C(1-\lambda)^p
\end{equation} for some $C < \infty$ that only depend on $(\alpha,p)$. Indeed, consider
\[
\phi(\lambda):=
\frac{\lambda^p-\lambda^\alpha}{(1-\lambda)^p}
\]
and note that $\phi$ is continuous on $[0,1)$. Therefore, to show \eqref{eqn:tu_lambda_bd}, it suffices to check that $\lim_{\lambda\ua 1} \phi(\lambda) < \infty$.  By L'H\^ospital's rule, 
$$\lim_{\lambda\ua 1} \phi(\lambda) = \lim_{\lambda\ua 1}\frac{p\lambda^{p-1} - \alpha\lambda^{\alpha-1}}{-p(1-\lambda)^{p-1}} = \lim_{\lambda\ua 1}\frac{q}{p(1-\lambda)^{p-1}}\leq \frac{q}{p}.$$
Hence, \eqref{eqn:tu_lambda_bd} holds and
\[
I_2\le CB_n^V(w)s^p(1-\lambda)^p = CB_n^V(w)(s-t)^p
\le C2^p B_n^V(w)d^p,
\]
where the last inequality follows from \eqref{eqn:tu_d_ub_st}. 

Combining the bounds for $I_1$ and $I_2$, we conclude that
\[
|H_n^V(\theta,w)-H_n^V(\theta',w)|
\le C_H B_n^V(w)d^{\alpha-q}
\]
for some constant $C_H<\infty$ depending only on $(\alpha,p = \alpha-q)$. This proves \eqref{eqn:tu_H_holder}.
\end{proof}

\end{document}